\documentclass[11pt]{article}

\usepackage{mathrsfs}
\usepackage{amsmath}
\usepackage{amsfonts}
\usepackage{amssymb}
\usepackage{xcolor, comment}
\usepackage{amsmath, amsthm, amsfonts, amssymb}
\newcommand{\CC}{{\mathbb C}}

\newcommand{\PP}{{\mathbb P}}

\def \-{\bar}

\newcommand{\ord}{\operatorname{ord}}

\newcommand{\FS}{\mathrm{FS}}

\newcommand{\cB}{\mathcal B}

\newcommand{\id}{\operatorname{id}}

\newcommand{\wt}[1]{\widetilde{#1}}
\newcommand{\wh}[1]{\widehat{#1}}
\newcommand{\OO}{\mathcal O}

\newcommand{\Cont}{\operatorname{Cont}}
\newcommand{\Stab}{\operatorname{Stab}}

\newcommand{\ii}{\sqrt{-1}}

\newtheorem{theorem}{Theorem}[section]
\newtheorem{lemma}[theorem]{Lemma}
\newtheorem{corollary}[theorem]{Corollary}
\newtheorem{proposition}[theorem]{Proposition}
\newtheorem{question}[theorem]{Question}
\newtheorem{definition}[theorem]{Definition}

\newtheorem{remark}[theorem]{Remark}

\date{}

\usepackage[colorlinks=true,linkcolor=blue,citecolor=blue,urlcolor=blue]{hyperref}

\newcommand{\C}{\mathbb C}
\newcommand{\cO}{\mathcal O}
\newcommand{\ddc}{\sqrt{-1}\,\partial\bar\partial}
\newcommand{\A}{A^2}

\newcommand{\Sing}{\operatorname{Sing}}
\newcommand{\Reg}{\operatorname{Reg}}
\newcommand{\Vol}{\operatorname{Vol}}
\newcommand{\Aut}{\operatorname{Aut}}

\newcommand{\Graph}{\operatorname{Graph}}
\newcommand{\pr}{\operatorname{pr}}

\begin{document}

\title{\bf Holomorphic Bergman isometries between bounded domains and the extension of Lu's theorem}

\author{Yuan Yuan \medskip \\
Institute for Theoretical Sciences, Westlake University,\\
Hangzhou 310024, Zhejiang, China\\
\texttt{yuanyuan@westlake.edu.cn}}



 \maketitle

\begin{abstract}
We study holomorphic maps between complex manifolds that preserve the Bergman metric up to a positive constant.
In the equal dimensional case, either the source is a Stein manifold and the target is a bounded domain in $\CC^n$, or
 the source is a bounded pseudoconvex domain and the target is a complex manifold, holomorphic Bergman isometry is a biholomorphism onto the complement of a relatively closed locally pluripolar set, and the constant is necessarily one. 
 These include the bounded domains in $\CC^n$ as special cases. 
Some applications to curvature-preserving maps are obtained. 
We further prove the rigidity result for local holomorphic Bergman isometries into Cartesian product of strongly pseudoconvex domains, with applications to finite analytic and modular correspondences.
\end{abstract}

\section{Introduction}

The study of holomorphic maps preserving Bergman metrics has developed
from two complementary perspectives, both of which motivate the present
paper. The first is the theory of holomorphic isometric embeddings
between bounded symmetric domains. Beginning with Bochner and Calabi's seminal
works on K\"ahler immersions and developed extensively in a series of fundamental works by Mok, this
theory exploits the algebraic and homogeneous structures of bounded
symmetric domains to establish extension, algebraicity, and rigidity
properties of local holomorphic isometries
(cf. \cite{B47, C53, M11, M12, M18}). It is also closely connected with arithmetic
geometry. Clozel and Ullmo studied volume-preserving algebraic
correspondences between Shimura varieties \cite{CU03}, while Mok and
Ng proved the total geodesy of the corresponding germs of
measure-preserving maps from a bounded symmetric domain to its
Cartesian product \cite{MN12}. Further rigidity problems for maps
preserving invariant \((p,p)\)-forms were raised by Mok \cite{M11} and obtained in
\cite{Y17,DY26}. 
Motivated by Mok’s deep and influential work, the first objective of this paper is to show that significant aspects of this rigidity phenomenon remain valid beyond the homogeneous setting, in the broader contexts of general bounded domains and Cartesian products of strongly pseudoconvex domains.

The second perspective originates from Lu's classical uniformization
theorem, which states that a bounded domain whose Bergman metric is complete and has
constant holomorphic sectional curvature is biholomorphic to the unit
ball \cite{L66}. Extensive progress has recently been made toward
removing the completeness assumption. Dong and Wong obtained several
extensions under additional hypotheses by using the Bergman-Calabi
diastasis and Bergman representative coordinates
\cite{DW22a,DW22b}. A decisive breakthrough was achieved by Huang and Li
in \cite{HL25a}, where they established a deep and far-reaching
generalization of Lu's theorem. They proved that, for a bounded
pseudoconvex domain, the correct conclusion in the absence of
completeness is that the domain is biholomorphic to the unit ball with a
relatively closed pluripolar subset removed.

More recently, Huang and Li extended this result to arbitrary Stein
manifolds, including unbounded pseudoconvex domains in complex Euclidean
space \cite{HL26}. Their work identifies the natural geometric defect
that is invisible to the Bergman space and replaces the purely geometric
assumption of Bergman completeness in Lu's theorem with the complex
analytic and geometric condition of pseudoconvexity.

The paper \cite{HL25a} has stimulated a number of subsequent
developments, including
\cite{ETX25a,ETX25b,HK26,LP26,Ya26,BGNX26}, and serves as one of the
principal sources of inspiration for the present work. Ebenfelt, Treuer, and Xiao introduced the notion of
\emph{Bergman-negligible sets} and clarified the corresponding
uniformization theory for general bounded domains
\cite{ETX25b}. Loi and Palmieri established an analogous
uniformization theorem for bounded domains with locally symmetric
Bergman metrics \cite{LP26}. These developments strongly suggest that
local metric or curvature equivalence of the Bergman metric should
determine the global biholomorphic structure of a bounded domain, up to
a thin subset that is undetectable by square-integrable holomorphic
functions.

In \cite{HL26}, Huang and Li further showed that a Stein manifold with a
well-defined Bergman metric of non-positive constant holomorphic
sectional curvature is biholomorphic to the complex ball with a
pluripolar subset removed. In sharp contrast to Lu's theorem, they also
constructed, in every complex dimension, Stein manifolds whose Bergman
metrics have positive constant holomorphic sectional curvature, although
their Bergman spaces do not even separate points. Recall that Lu's
theorem implies that no Stein manifold can admit a \emph{complete}
Bergman metric of positive constant holomorphic sectional curvature.

Our first main result is a mapping-theoretic counterpart of this principle and may be viewed as a further development of the  work of Lu, Huang, and Li.

\begin{theorem}\label{domain}
Let \(D_1,D_2\subset\C^n\) be bounded
domains with $n \geq 1$ and $D_1$ be pseudoconvex.  If
$  f:D_1\rightarrow D_2$
is a holomorphic Bergman isometry satisfying
$ f^*\omega_{D_2}=\lambda\omega_{D_1}$ 
for some $\lambda>0$,
then there exists a relatively closed pluripolar set $E$ in \(D_2\), 
such that
$  f:D_1\rightarrow D_2\setminus E $
is biholomorphic. Moreover, \(\lambda=1\).
\end{theorem}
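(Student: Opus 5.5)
The plan is to compare Bergman kernels through the Bergman–Calabi diastasis and then push the Bergman space of $D_1$ onto $D_2$ via the transformation rule. First, local injectivity: since $f^*\omega_{D_2}=\lambda\omega_{D_1}$ with $\omega_{D_1}$ positive definite, $df$ is everywhere invertible, so $f$ is a local biholomorphism. Write $K_j$ for the Bergman kernels. The identity $f^*\omega_{D_2}=\lambda\omega_{D_1}$ means $\ddc\log K_2(f(z),\overline{f(z)})=\lambda\,\ddc\log K_1(z,\bar z)$. By Calabi's theory, the diastasis functions then satisfy $D_2(f(z),f(w))=\lambda D_1(z,w)$ near the diagonal, and after polarization and analytic continuation this holds on all of $D_1\times D_1$, because both sides are real analytic there. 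Equivalently, locally there is a nonvanishing holomorphic $h$ with
\begin{equation*}
K_2(f(z),\overline{f(w)})\,h(z)\overline{h(w)}=K_1(z,\bar w)^{\lambda}.
\end{equation*}
Taking $z=w$ and using the Jacobian $J_f$, I will compare $K_2(f,\bar f)|J_f|^2$ with $K_1$.

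Next, I will show $\lambda=1$ and that $f$ is injective. I would pull back $L^2$ holomorphic $n$-forms: for $g\in A^2(D_2)$, $(g\circ f)J_f$ is holomorphic on $D_1$, and it lies in $A^2(D_1)$ provided $f$ has bounded multiplicity, with norm at most (multiplicity)$^{1/2}\|g\|$. The difficulty is that multiplicity need not be finite a priori. So I would instead use the pseudoconvexity of $D_1$ together with the Huang–Li machinery. The metric $\lambda\omega_{D_1}$ is Kähler, and Hörmander/Ohsawa–Takegoshi-type $L^2$ extension on the pseudoconvex $D_1$ produces functions in $A^2(D_1)$ that separate points and give local coordinates. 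Comparing the diastasis identity with the reproducing property, and using the fact that the diastasis is determined by the Hilbert space embedding $z\mapsto[K_j(\cdot,\bar z)]$ into projective space, one gets a projective-unitary relation between the Bergman embeddings. Rigidity of Calabi embeddings into $\PP(\ell^2)$ should then force the exponent $\lambda$ to be $1$: the embedding of $D_1$ via $K_1^\lambda$ must match an embedding via a genuine Bergman kernel, and Calabi's resolvability condition fails for non-integer $\lambda$. An integer $\lambda\ge2$ would produce a kernel strictly larger than the Bergman one in Hilbert-space dimension counting, which contradicts the extremal property $K_1(z,\bar z)=\sup|\phi(z)|^2/\|\phi\|^2$. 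The identity also gives that $f(z)=f(w)$ implies $D_1(z,w)=0$, which forces $z=w$ because $A^2(D_1)$ separates points (here pseudoconvexity and boundedness are used). So $f$ is injective and $\lambda=1$.

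Finally, I will identify the image. Now $f:D_1\to f(D_1)=:U\subset D_2$ is a biholomorphism onto an open set. The kernel identity says $K_2|_U=K_U$, where $K_U$ is the Bergman kernel of $U$ obtained by transporting $K_1$. The next step is to show that restriction $A^2(D_2)\to A^2(U)$ is an isometric isomorphism, so $E:=D_2\setminus U$ is $L^2$-negligible. Because $U\cong D_1$ is pseudoconvex, $U$ is Stein, and by the Huang–Li (and Ebenfelt–Treuer–Xiao) characterization, a pseudoconvex open subset whose Bergman kernel coincides with that of the ambient bounded domain has relatively closed pluripolar complement. This follows from Siciak/Josefson-type arguments: if $E$ is nonpluripolar somewhere, one can construct via $L^2$ $\bar\partial$ on $U$ an $A^2(U)$ function that does not extend. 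The main obstacle is the step $\lambda=1$ together with injectivity, namely the passage from the local metric identity to a global kernel identity without completeness. I would first try to prove it via the polarization argument above, combined with the extremal characterization of $K_1$ on the pseudoconvex domain $D_1$.
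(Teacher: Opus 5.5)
\textbf{There is a genuine gap.} It sits at the step you flag yourself: you try to get $\lambda=1$ before knowing anything global about $f(D_1)\subset D_2$.

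Your mechanism cannot work. The hypothesis already says $\lambda\omega_{D_1}=(\cB_{D_2}\circ f)^*\omega_{\FS}$, so $\lambda\omega_{D_1}$ is always resolvable: $K_{D_1}^\lambda$ is locally positive definite for free. Moreover, non-integer powers are typically resolvable. For the disc, $K_\Delta(z,\bar w)^\lambda=\pi^{-\lambda}(1-z\bar w)^{-2\lambda}$ has positive coefficients in $z\bar w$ for every $\lambda>0$. So Calabi's criterion gives no contradiction, and nothing in your sketch excludes $0<\lambda<1$.

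The ``dimension counting'' for integer $\lambda\ge 2$ is not an argument either. The correct form of your extremal-property idea is Proposition~\ref{constant}: it uses the monotonicity $K_{D_2}\le K_\Omega$ on $\Omega=f(D_1)$, the normalized-kernel identity, and Lemma~\ref{lem:vanishing}. But it only yields $\lambda\le 1$.

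Your last step therefore also fails. It rests on $K_{D_2}|_U=K_U$, which needs $\lambda=1$. Even then it does not follow from the diastasis identity, which only gives $K_{D_2}(\zeta,\bar\xi)=g(\zeta)\overline{g(\xi)}K_U(\zeta,\bar\xi)$ with a nonvanishing holomorphic multiplier $g$. The paper removes such a multiplier separately, by a moment argument, in Proposition~\ref{negligible}.

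A smaller point: the diastasis identity is not real analytic on all of $D_1\times D_1$, since it is singular where $K_{D_1}(z,w)=0$. That is why Proposition~\ref{inj} needs a limiting argument to show $K_{D_1}(b,a)\neq 0$. Injectivity itself holds for every $\lambda>0$.

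\textbf{The missing idea is to prove pluripolarity of $E$ first, for arbitrary $\lambda$, and read off $\lambda=1$ afterwards.} The paper's route is as follows.
\begin{enumerate}
\item Set $\Omega=f(D_1)$, which is pseudoconvex because $f$ is injective, and put $\eta:=\lambda^{-1}\omega_{D_2}$. Then $\eta|_\Omega=\omega_\Omega$, so $\cB_\Omega$ is a local holomorphic isometry from the real-analytic K\"ahler manifold $(D_2,\eta)$ into $(\PP^\infty,\omega_{\FS})$.
\item Calabi's extension theorem, in the Huang--Li form, continues the germ of $\cB_\Omega$ along every path in $D_2$.
\item On the covering Riemann domain that resolves the monodromy, every $h\in A^2(\Omega)$ continues holomorphically off the analytic set where the first homogeneous coordinate vanishes. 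Hence that domain maps locally biholomorphically into the $L^2$-envelope of holomorphy of $\Omega$.
\item Irgens' theorem says that for pseudoconvex $\Omega$ the envelope minus $\Omega$ is pluripolar. Together with Josefson's theorem, this gives that $D_2\setminus\Omega$ is pluripolar (Proposition~\ref{lem:continuation}).
\item Only then does Lemma~\ref{lem:kernel-removal} give $\omega_{D_2}|_\Omega=\omega_\Omega$, and hence $\lambda=1$.
\end{enumerate}

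Your closing appeal to non-extendable $A^2(U)$ functions is in effect Irgens' theorem, and it is the right tool. It must be fed by this continuation argument, not by a kernel identity you have not established.
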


In fact, we obtain following two results, both of which are the generalization of Theorem \ref{domain} to Stein manifolds.

\begin{theorem}\label{main1}
Let $D$ be a bounded domain in $\mathbb{C}^n$ and $M$ be a Stein manifold of complex dimension $n$ with well-defined Bergman metric $\omega_M$. 
 If
$  f: M\rightarrow D$
is a holomorphic Bergman isometry satisfying
$ f^*\omega_{D}=\lambda\omega_{M}$ 
for some $\lambda>0$,
then there exists a relatively closed pluripolar set $E$ in \(D\), 
such that
$  f: M\rightarrow D \setminus E $
is biholomorphic. Moreover, \(\lambda=1\).
\end{theorem}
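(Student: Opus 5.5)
We follow the Calabi diastasis and Bergman-kernel approach of Huang--Li, with the Bergman kernel of the bounded domain $D$ playing the role that the ball's kernel plays in the uniformization theorem.

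\emph{Step 1: the isometry becomes a kernel identity.} Write $K_M$ and $K_D$ for the Bergman kernels. Since $f^*\omega_D=\lambda\omega_M$, we have $\partial\bar\partial(\log K_D(f,\bar f)-\lambda\log K_M)=0$ near any point. By Calabi's rigidity of the diastasis, after polarization this gives, locally and hence (by analytic continuation) globally on $M\times \overline{M}$,
\begin{equation*}
K_D(f(z),\overline{f(w)})=K_M(z,\bar w)^{\lambda}\,\varphi(z)\overline{\varphi(w)}
\end{equation*}
for some nowhere vanishing holomorphic $\varphi$ on $M$. The branch of the power is fixed by continuity near the diagonal.

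\emph{Step 2: $\lambda=1$ and $f$ is measure preserving.} Note that $f$ is locally biholomorphic wherever $\omega_M>0$, which holds everywhere since $f^*\omega_D$ is nondegenerate. Compare the Bergman volume forms: $\omega_D^n$ corresponds to $K_D\,\det(\partial\bar\partial\log K_D)$, and similarly on $M$. Combining this with the transformation rule $K_D(f,\bar f)|J_f|^2$ versus $K_M$, we try to show that $\lambda$ must be $1$. The cleanest route is to pull $L^2$ holomorphic $n$-forms back under $f$. For $h\in A^2(D)$, the form $(h\circ f)J_f\,dz$ has $L^2$ norm at most the norm of $h$ times the multiplicity of $f$. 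Together with Step 1, this yields $K_M(z,\bar z)\ge c\,K_D(f,\bar f)|J_f|^2$. For the reverse inequality we use that $M$ is Stein and the Bergman kernel is assumed well-defined (that is, it separates points and directions). By a reproducing-kernel argument we show that $g(z)=K_M(z,\bar w_0)/(\text{suitable factor})$ extends through $f$ to an $L^2$ function on $D$. Comparing homogeneity in the kernel identity forces $\lambda=1$ and $K_M=K_D(f,\bar f)|J_f|^2$. This is where a scaling argument is used: a nontrivial power $\lambda\ne1$ of a reproducing kernel cannot again be pulled back from a kernel of the same dimension once the volume identities are imposed.

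\emph{Step 3: injectivity and surjectivity up to a pluripolar set.} With $\lambda=1$, the identity $K_D(f(z),\overline{f(w)})=K_M(z,\bar w)J_f(z)\overline{J_f(w)}$ shows that $f(z_1)=f(z_2)$ forces the Bergman embeddings of $z_1$ and $z_2$ to coincide. Since the Bergman space of $M$ separates points, $f$ is injective. So $f$ is a biholomorphism onto the open set $U=f(M)\subset D$, with $K_U=K_D|_U$ by the transformation rule. A domain $U\subset D$ whose Bergman kernel agrees with that of $D$ must have $D\setminus U$ $L^2$-negligible: the function $K_D(\cdot,\bar p)$ restricted to $U$ reproduces all of $A^2(U)$, so $A^2(U)=A^2(D)|_U$. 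Because $U\cong M$ is Stein, hence pseudoconvex, the complement $E=D\setminus U$ is a relatively closed set across which all $L^2$ holomorphic functions on the pseudoconvex $U$ extend. By the Huang--Li argument, using Ohsawa--Takegoshi to produce $L^2$ functions on $U$ blowing up at boundary points of $U$ inside $D$, $E$ has empty interior and must be pluripolar. This step, and the proof that $\lambda=1$ in Step 2, are the main obstacles. For Step 3 we would first try to adapt the Huang--Li argument in \cite{HL25a, HL26} via the Ohsawa--Takegoshi extension theorem.
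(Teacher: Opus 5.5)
There is a genuine gap, and it sits where you flag the ``main obstacles'': Step 2 gives no mechanism for what is really the content of the theorem.

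The bound $K_M\ge c\,K_D(f,\bar f)|J_f|^2$ is only the trivial monotonicity direction. Even that needs bounded fibre cardinality of $f$, which you do not have before injectivity. The reverse bound amounts to saying that every element of $A^2(M)\cong A^2(f(M))$ extends as an $L^2$ holomorphic function to all of $D$. Your sentence ``by a reproducing-kernel argument we show that $K_M(\cdot,\bar w_0)/(\text{suitable factor})$ extends through $f$'' simply asserts this. Nothing in your proposal continues anything from $f(M)$ into $D\setminus f(M)$.

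The ``homogeneity'' argument for $\lambda=1$ is also not an argument. Positivity and scaling of kernels do not single out $\lambda=1$: for example, $K_\Delta^\lambda$ is a reproducing kernel for every $\lambda>0$. In the paper, kernel manipulations of your type give only $\lambda\le1$ (Proposition~\ref{constant}, via vanishing orders and Lemma~\ref{lem:vanishing}). In the proof of this theorem, $\lambda=1$ is a by-product of the pluripolarity of $D\setminus f(M)$, not an input to it. Since your Step 3 uses $\lambda=1$ and $K_U=K_D|_U$ from Step 2, the whole chain rests on the missing step. Your final deduction (pseudoconvex $U$ with $A^2(U)=A^2(D)|_U$ forces $D\setminus U$ pluripolar) is fine given Irgens' theorem, but it presupposes that conclusion.

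The missing idea is to avoid needing $\lambda=1$ at all. Put $\Omega=f(M)$ and $\eta=\lambda^{-1}\omega_D$. Then the Bergman--Bochner map $\cB_\Omega$ is a local holomorphic isometry from $(D,\eta)$ into $(\PP^\infty,\omega_{\FS})$, so Calabi's extension theorem in the Huang--Li form \cite{HL25b} continues its germ along every path in $D$. The paper then proceeds as follows:
\begin{itemize}
\item it resolves the monodromy on a covering Riemann domain over $D$;
\item it extends every $h\in A^2(\Omega)$ there through the lifted $\cB_\Omega$;
\item it maps that Riemann domain into the $L^2$-envelope of the pseudoconvex $\Omega$, in which $\Omega$ has pluripolar complement by Irgens' theorem;
\item it uses Josefson's theorem to conclude that $D\setminus\Omega$ is pluripolar.
\end{itemize}
Only then does Lemma~\ref{lem:kernel-removal} give $\omega_D|_\Omega=\omega_\Omega$, hence $\lambda=1$.

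Two further points. First, injectivity needs $A^2(M)$ to separate points, and this is \emph{not} part of ``well-defined Bergman metric'', which only means $\cB_M$ is an immersion. The paper obtains point separation from Huang--Li \cite{HL26} and then applies the diastasis argument of Proposition~\ref{inj}. Second, your global identity involving $K_M(z,\bar w)^\lambda$ in Step 1 is not well defined across the zero set of $K_M(\cdot,\bar w)$ for non-integral $\lambda$. Working with the diastasis, that is, with absolute values, avoids this branch problem.
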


\begin{theorem}\label{thm:main2}
Let $D\subset\CC^n$ be a bounded pseudoconvex domain. Let $M$ be a complex manifold of complex dimension
$n$  whose  Bergman-Bochner map is an immersion. 
 If $f:D \to M$ is a holomorphic Bergman isometry satisfying
$ f^*\omega_{M}=\lambda\omega_{D}$ 
for some $\lambda>0$,
then there exists a relatively closed locally pluripolar set $E$ in \(M\), 
such that
$  f: D \rightarrow M \setminus E $
is biholomorphic. Moreover, \(\lambda=1\).
\end{theorem}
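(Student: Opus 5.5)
The plan is to compare Bergman kernels through the isometry, get an injective map on $D$, and then use pseudoconvexity of $D$ to identify the complement of $f(D)$ as a thin set. Since $M$ has an immersive Bergman--Bochner map, $\omega_M=\ddc\log K_M(z,\bar z)$ is a genuine K\"ahler metric. The equation $f^*\omega_M=\lambda\omega_D$ then forces $f$ to be a local biholomorphism, because $\omega_D$ is nondegenerate and $\dim D=\dim M$.

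\textbf{Step 1: the transformation identity.} Set
$$\varphi=\log K_M(f(z),\overline{f(z)})+\log|\det J_f(z)|^2-\log K_D(z,\bar z).$$
Since $\log|\det J_f|^2$ is pluriharmonic, we get $\ddc\varphi=(\lambda-1)\omega_D$. Polarize and use Calabi's diastasis. Comparing the Bergman--Bochner maps of $D$ and $M$ at a base point, Calabi's rigidity theorem gives a linear isometric relation between the pulled-back Bergman map $\iota_M\circ f$ and the Bergman map $\iota_D$. For the ambient Fubini--Study metric this relation reads $\lambda\cdot\FS$ pulled back versus $\FS$.

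\textbf{Step 2: $\lambda=1$.} An isometry from $(\PP^\infty,\lambda\FS)$ into $(\PP^\infty,\FS)$ forces $\lambda$ to be an integer and gives a Veronese-type map. Conversely, the pullback direction forces $1/\lambda$ to be an integer as well. Hence $\lambda=1$. Alternatively, one can argue via Huang--Li's approach: a function $\varphi$ with $\ddc\varphi=(\lambda-1)\omega_D$ and a suitable $L^2$ extremal normalization must vanish.

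Once $\lambda=1$, the function $\varphi$ is pluriharmonic. The standard argument is to pull back the orthonormal basis of $A^2(M)$ via $(\phi\circ f)\cdot\det J_f$; these lie in $A^2(D)$ only if $f$ is injective, so injectivity is not yet available here. Instead, use Calabi rigidity to obtain
$$K_M(f(z),\overline{f(w)})\det J_f(z)\overline{\det J_f(w)}=K_D(z,\bar w)$$
up to the holomorphic normalization. This identity carries a unitary relation between the two Bergman maps.

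\textbf{Step 3: injectivity of $f$.} Suppose $f(z_1)=f(z_2)$ with $z_1\ne z_2$. Then $\iota_D(z_1)=\iota_D(z_2)$ in projective space. But $A^2(D)$ separates points for a bounded domain $D$, since it contains all polynomials. This contradiction shows $f$ is injective.

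\textbf{Step 4: $M\setminus f(D)$ is locally pluripolar.} This is the main obstacle, and it is where the pseudoconvexity of $D$ enters. I would follow Huang--Li. The inverse $g=f^{-1}$ on $f(D)$ is given by coordinates $z_j$, and each $z_j\cdot(\text{kernel ratio})$ is expressed through the Bergman space of $M$ via the unitary relation, so $g$ extends as a meromorphic-type object on $M$. The first attempt would be to show that $f(D)$ is a domain whose boundary inside $M$ consists of points where $K_M(p,\bar p)\,|\det J_g|^2$ blows up. Pseudoconvexity of $D$ lets one use Ohsawa--Takegoshi or Hörmander $L^2$ estimates to produce $L^2$ holomorphic functions on $D$ peaking at such boundary points. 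These, transported to $M$, yield a plurisubharmonic function $\psi=\log|h|$ that equals $-\infty$ on $E=M\setminus f(D)$ locally. Concretely, take $h=\det J_g\cdot$ (a suitable kernel section) and show that its extension vanishes identically on $E$. This gives local pluripolarity of $E$, and $E$ is relatively closed because $f(D)$ is open. Handling this extension argument near $E$, without assuming completeness, is the delicate point.
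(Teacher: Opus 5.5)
Your Step~2 does not establish $\lambda=1$, and Steps~3--4 depend on it. Nothing in the data gives a holomorphic isometry from $(\PP^\infty,\lambda\,\omega_{\FS})$ into $(\PP^\infty,\omega_{\FS})$, or one in the reverse direction. The maps $\cB_M\circ f$ and $\cB_D$ are isometries of two \emph{different} metrics on $D$, namely $\lambda\omega_D$ and $\omega_D$, and Calabi's rigidity only compares full isometries of the \emph{same} metric. This local theory cannot produce any integrality, let alone $\lambda=1$. For $D=\mathbb B^n$ the kernel $(1-\langle z,w\rangle)^{-t}$ is positive definite for every $t>0$, so $(\mathbb B^n,s\,\omega_{\mathbb B^n})$ admits a holomorphic isometry into $\PP^\infty$ for every $s>0$. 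The ``$L^2$ extremal normalization'' alternative is not an argument.

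The equality $\lambda=1$ is global information about $M$, and in the paper it is the \emph{last} step. Once $E=M\setminus f(D)$ is shown to be relatively closed and locally pluripolar, Lemma~\ref{lem:manifold-removal} gives $\omega_M|_{f(D)}=\omega_{f(D)}$, hence $\lambda\omega_D=f^*\omega_M=\omega_D$. Your ``unitary relation'' between the Bergman spaces exists only when $\lambda=1$. You use it in Step~3 and again in Step~4 to express $z_j\circ f^{-1}$ through $\A(M)$, so as written the argument is circular.

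Injectivity has to be proved for arbitrary $\lambda$, as in Proposition~\ref{inj}. The diastasis identity $D_M(f(z),f(a))=\lambda D_D(z,a)$ propagates by real analyticity off the zero set of $K_D(z,a)K_M(f(z),f(a))$. A limiting argument shows $K_D(b,a)\neq0$ whenever $f(a)=f(b)$. Equality in Cauchy--Schwarz and point separation of $\A(D)$ then give $a=b$. Your Step~3 also does not address this globalization across zeros of the kernels.

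Step~4 is missing its key mechanism. Points of $E$ are interior points of $M$ lying over nothing in $D$. Functions built on $D$ by H\"ormander or Ohsawa--Takegoshi cannot be ``transported'' to a neighbourhood of $E$ unless you already know $\A(D)$ continues there, which is exactly what must be proved. Your claim that an extension of $\det J_g$ times a kernel section vanishes on $E$ is unsupported.

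The paper supplies the continuation (Proposition~\ref{prop:stein-target-continuation}) as follows:
\begin{itemize}
\item The map $\cB_D\circ f^{-1}$ is a holomorphic isometry of $(f(D),\lambda^{-1}\omega_M)$ into $(\PP^\infty,\omega_{\FS})$. Since $\lambda^{-1}\omega_M$ is a real-analytic K\"ahler metric on all of $M$, the Huang--Li form of Calabi's extension theorem continues its germ along every path in $M$, for any $\lambda$.
\item On the resulting monodromy covering $X\to M$, every $u\in\A(D)$ continues holomorphically off the analytic set $\{w_0=0\}$.
\item The continued coordinates $G=(\widehat z_1,\dots,\widehat z_n)$ turn $X^1$ (after also removing $\{\det dG=0\}$) into a Riemann domain over $\CC^n$ that is an $\A(D)$-extension of $D$. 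It therefore maps locally biholomorphically into the $L^2$-envelope $\widehat D_2$.
\item Pseudoconvexity enters only through Irgens' theorem: $\widehat D_2$ minus its distinguished sheet is pluripolar.
\item The identity theorem gives $\pi=f\circ G$ off the preimage of that pluripolar set. Hence $M\setminus f(D)$ lies in the projection of a locally pluripolar set, with Josefson's theorem handling the removed analytic sets.
\end{itemize}
If $\lambda=1$ were known beforehand, your idea of writing $z_j\circ f^{-1}$ as a ratio of elements of $\A(M)$ (via the multiplier argument of Proposition~\ref{negligible}) would give a single-valued meromorphic continuation and bypass the monodromy covering. You would still need Irgens' theorem, not peak functions, to conclude.
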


The deep extension results due to Mok \cite{M12} and Huang-Li \cite{HL26} yield the local version of above theorems.

 \begin{theorem}\label{al2}
 Let $M$ be a Stein manifold of complex dimension $n$ with well-defined Bergman metric $\omega_M$,
and let $D \subset \CC^n$ be a bounded pseudoconvex domain with complete Bergman metric $\omega_D$. Suppose there exists a connected open set $U$ in $M$ and a holomorphic map $f: U \to D$ such that $f^*\omega_D =\lambda  \omega_M$ on $U$ for some $\lambda>0$.
Then there exists a relatively closed pluripolar set $E$ in $D$ such that 
$f$ extends to a biholomorphism $F : M \to D\setminus E$. 
 \end{theorem}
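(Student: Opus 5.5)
The strategy is to extend the local isometry $f$ to a global holomorphic Bergman isometry $F: M\to D$, and then apply Theorem \ref{main1}. The global map $F$ satisfies $F^*\omega_D=\lambda\omega_M$. Theorem \ref{main1} then produces a relatively closed pluripolar set $E\subset D$ with $F:M\to D\setminus E$ biholomorphic, and it forces $\lambda=1$. The whole difficulty is therefore the extension step, which I would split into analytic continuation along paths followed by a monodromy argument.

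\textbf{Step 1 (continuation along paths).} I would use the Bergman--Calabi diastasis and the Bergman kernels. Since $f^*\omega_D=\lambda\omega_M$ on $U$, after polarization the identity of diastases
\[
\lambda\,\Phi_M(z,w)=\Phi_D(f(z),f(w)),\qquad z,w\in U,
\]
holds near the diagonal. Consequently $\log K_D(f(z),\overline{f(w)})$ and $\lambda\log K_M(z,\bar w)$ differ by a pluriharmonic term of the form $h(z)+\overline{h(w)}$. This is exactly the situation of Mok's extension theorem \cite{M12} and its generalization by Huang--Li \cite{HL26}. In that setting the germ is cut out by a Bergman--Bochner-type identity, of the form ``the Bergman map of $D$ composed with $f$ equals a linear transform of the Bergman map of $M$''. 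Because of this, $f$ can be continued analytically along any path $\gamma$ in $M$ as long as the continuation stays in a compact subset of $D$.

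The completeness of $\omega_D$ is what guarantees this. The continued map is an isometry up to the factor $\lambda$, so it sends a path of finite $\omega_M$-length to a path of finite $\omega_D$-length. By completeness, the image path remains in a compact subset of $D$, so it cannot escape to $\partial D$. Hence continuation never terminates. I would also use the Stein property of $M$, as in \cite{HL26}, to handle points where the defining identity degenerates, for example where the Bergman map of $M$ fails to be an immersion. There the continuation is carried across a thin analytic set by a Riemann extension argument, using the boundedness of $D$.

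\textbf{Step 2 (single-valuedness).} I expect this monodromy issue to be the main obstacle, since $M$ need not be simply connected. The plan is to pass to the universal cover $\widetilde M$, on which continuation yields a global holomorphic isometry $\widetilde F:\widetilde M\to D$. I would then argue that $\widetilde F$ descends to $M$. The extension identity expresses the components of $f$ algebraically in terms of the Bergman kernel of $M$ and finitely many of its derivatives, as in Mok's rational representation of the germ. These quantities are globally single-valued on $M$, so every branch of the continuation is given by the same formula. Provided the formula determines the map uniquely off a thin set, all branches agree, and they agree everywhere by the identity theorem. This yields a single-valued global extension $F:M\to D$ with $F^*\omega_D=\lambda\omega_M$.

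\textbf{Step 3.} Applying Theorem \ref{main1} to $F$ gives the relatively closed pluripolar set $E\subset D$ with $F:M\to D\setminus E$ biholomorphic, and it also gives $\lambda=1$.
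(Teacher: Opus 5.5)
Your top-level reduction matches the paper's proof: build a global holomorphic Bergman isometry $F:M\to D$, then apply Theorem~\ref{main1}, which supplies both $E$ and $\lambda=1$. The paper, however, gets $F$ in one line from Corollary~\ref{yy}, which rests on Proposition~\ref{hl26}. Equivalently you could use Proposition~\ref{m2012} with $M_1=M$, $M_2=D$: $A^2(D)$ separates points because $D$ is bounded, and $\omega_D$ is complete by hypothesis. These extension theorems already give a \emph{single-valued} map on all of $M$, with no topological assumption on $M$ and no universal cover.

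Once you re-derive the extension yourself, your Step~2 has a genuine gap. In this generality there is no formula expressing the components of $f$ through $K_M$ and finitely many derivatives. The polarized identity only places the graph of $f$ inside an analytic subvariety of $M\times D$, which may a priori have several sheets over a point of $M$. The representation $\cB_D\circ f=T\circ\cB_M$ from your Step~1 is Calabi rigidity and holds only when $\lambda=1$, which you do not yet know. Your proviso ``provided the formula determines the map uniquely off a thin set'' is exactly what needs proof, so the descent from $\widetilde M$ is unjustified.

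The missing idea is point separation in the \emph{target}, via the equality case of Cauchy--Schwarz, as in Proposition~\ref{inj}. For $z,w$ near $z_0\in U$ the diastasis identity gives
\[
\frac{|K_D(f(z),f(w))|^2}{K_D(f(z),f(z))\,K_D(f(w),f(w))}
=\left(\frac{|K_M(z,w)|^2}{K_M(z,z)\,K_M(w,w)}\right)^{\lambda}.
\]
Fix $w$ and continue this identity in $z$ along a loop $\gamma$ based at $z_0$.
\begin{itemize}
\item Both sides are continuous. The left side is real-analytic along the continuation, and the right side is real-analytic off the zero set of $K_M(\cdot,w)$, whose complement in each small ball is connected.
\item Hence the identity persists, and at the end it holds with $f(z)$ replaced by the returned branch $f_\gamma(z)$.
\item Setting $w=z$ gives equality in Cauchy--Schwarz, so $K_D(\cdot,f_\gamma(z))$ and $K_D(\cdot,f(z))$ are proportional.
\item Since the coordinate functions lie in $A^2(D)$, this forces $f_\gamma(z)=f(z)$, so the monodromy is trivial.
\end{itemize}
For $\lambda=1$ your formula would work for the same reason: $\cB_D$ is injective.

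Two minor corrections. First, since $\omega_M$ is well defined, $\cB_M$ is an immersion everywhere; the actual degeneracy loci are the zero sets of the off-diagonal kernels. Second, the Stein hypothesis enters, via \cite{HL26}, to make $A^2(M)$ separate points for the injectivity step inside Theorem~\ref{main1}, not for the continuation.
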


  \begin{theorem}\label{al3}
  Let $M$ be a Stein manifold of complex dimension $n$ with well-defined complete Bergman metric $\omega_M$ and the Bergman space $\A(M)$ separates points on $M$.
Let $D \subset \CC^n$ be a bounded pseudoconvex domain. Suppose there exists a connected open set $U$ in $D$ and a holomorphic map $f: U \to M$ such that $f^*\omega_M = \lambda \omega_D$ on $U$ for some $\lambda>0$.
Then there exists a relatively closed pluripolar set $E$ in $M$ such that 
$f$ extends to a biholomorphism $F : D \to M\setminus E$.  
 \end{theorem}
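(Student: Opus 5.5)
We reduce Theorem \ref{al3} to Theorem \ref{thm:main2} by first extending the local isometry $f:U\to M$ to a global holomorphic map $F:D\to M$ with $F^*\omega_M=\lambda\omega_D$ on all of $D$.

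\textbf{Step 1: reformulation via the Bergman--Bochner maps.} Fix orthonormal bases $\{\phi_j\}$ of $\A(D)$ and $\{\psi_k\}$ of $\A(M)$. These give the Bergman--Bochner maps $\Phi_D:D\to\PP(\ell^2)$ and $\Psi_M:M\to\PP(\ell^2)$, with $\omega_D=\Phi_D^*\omega_{\FS}$ and $\omega_M=\Psi_M^*\omega_{\FS}$. Since $\A(M)$ separates points and $\omega_M$ is well defined, $\Psi_M$ is injective and immersive. In particular, $M$ satisfies the hypothesis of Theorem \ref{thm:main2}. On $U$ the isometry condition reads $(\Psi_M\circ f)^*\omega_{\FS}=\lambda\,\Phi_D^*\omega_{\FS}$.

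\textbf{Step 2: algebraic-type relation on $U$.} Use Calabi's diastasis: the diastasis functions are polarizable in terms of the kernels. Shrinking $U$ and fixing a base point $p\in U$, the isometry identity becomes a functional equation
\[
\frac{K_M(f(z),\overline{f(w)})\,K_M(f(p),\overline{f(p)})}{K_M(f(z),\overline{f(p)})\,K_M(f(p),\overline{f(w)})}
=\left(\frac{K_D(z,\bar w)\,K_D(p,\bar p)}{K_D(z,\bar p)\,K_D(p,\bar w)}\right)^{\lambda}.
\]
By Calabi rigidity, this says that $\Psi_M\circ f$ and the map induced by $\lambda$-th powers of normalized kernels of $D$ differ by a unitary-type linear isometry on $U$. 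This is the framework of Mok \cite{M12} and Huang--Li \cite{HL26}. In particular, the normalized coordinates $\psi_k(f(z))/\psi_0(f(z))$ are expressed as $\ell^2$-linear combinations of functions built from $K_D(z,\bar w_i)$. These combinations are holomorphic on all of $D$, or at least on $D$ minus a pluripolar set where the normalizing kernel vanishes. This gives a holomorphic map $G:D\to\PP(\ell^2)$ extending $\Psi_M\circ f$.

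\textbf{Step 3: landing in $\Psi_M(M)$ (main obstacle).} We must show that $G(D)\subset\Psi_M(M)$, so that $F:=\Psi_M^{-1}\circ G$ is defined. This is where completeness of $\omega_M$ enters. Consider the set $W\subset D$ of points near which $G$ factors through $M$; it contains $U$ and is open. To see that $W$ is closed, take $z_k\to z_\infty\in D$ with $z_k\in W$ and $F(z_k)$ defined. The isometry gives $d_M(F(z_k),F(p))=\sqrt\lambda\, d_D(z_k,p)$, which is bounded. By completeness of $\omega_M$ and Hopf--Rinow, the points $F(z_k)$ stay in a compact subset of $M$. A limit point $q$ satisfies $\Psi_M(q)=G(z_\infty)$. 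Since $\Psi_M$ is an injective immersion, it is a local embedding near $q$, so $G$ factors locally through $M$ near $z_\infty$. Connectedness of $D$ then gives $W=D$.

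The delicate points are twofold. First, the extension in Step 2 must be handled across the zero set of the normalizing kernel, which requires a change of base point. Second, the powers $\lambda$ require branch choices, and these should be handled as in \cite{HL26}. Consistency follows by the identity theorem, since $F^*\omega_M=\lambda\omega_D$ holds on the open set $U$ and hence everywhere.

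\textbf{Step 4: conclusion.} Now $F:D\to M$ is a holomorphic Bergman isometry on all of $D$. By Theorem \ref{thm:main2}, $\lambda=1$ and there is a relatively closed locally pluripolar set $E\subset M$ such that $F:D\to M\setminus E$ is biholomorphic. Finally, $E$ is globally pluripolar because $M$ is Stein: by Josefson's theorem on Stein manifolds, locally pluripolar sets are globally pluripolar.
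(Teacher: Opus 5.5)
Your reduction matches the paper's. The paper proves Theorem~\ref{al3} in two moves. First it extends $f$ to a global holomorphic Bergman isometry $F:D\to M$ via Corollary~\ref{yy2}, i.e.\ Mok's extension theorem (Proposition~\ref{m2012}), whose hypotheses are exactly completeness of $\omega_M$ and point separation of $\A(M)$. Then it applies Theorem~\ref{thm:main2}. Your Steps~1 and~4, including the Stein version of Josefson's theorem, are this argument. The problem is that you do not invoke Proposition~\ref{m2012}; you try to rebuild it in Steps~2--3, and the central step of that reconstruction fails as written.

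\textbf{Step~2.} Calabi rigidity on $U$ only identifies $\Psi_M\circ f$, up to a linear isometry, with the map attached to the kernel $\bigl(K_D(z,\bar w)K_D(p,\bar p)/(K_D(z,\bar p)K_D(p,\bar w))\bigr)^{\lambda}$ on $U\times U$. Carrying this to all of $D$ needs a single-valued holomorphic object on $D$ inducing $\lambda\omega_D$. For integer $\lambda$ a tensor power of the Bergman--Bochner map of $D$ would serve, but for $\lambda\notin\mathbb Z$ nothing of the kind is available a priori:
\begin{itemize}
\item $D$ need not be a Lu Qi-Keng domain, so $K_D(\cdot,\bar w)$ and $K_D(\cdot,\bar p)$ may vanish in $D$;
\item the $\lambda$-th powers then have monodromy around these zero sets and are not holomorphic across them;
\item $\ell^2$-combinations that converge on $U$ have no reason to converge elsewhere.
\end{itemize}
So the single-valued $G:D\to\PP^\infty$ on which Step~3 rests is never constructed. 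Deferring the branch choices to Huang--Li postpones exactly the hard part.

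\textbf{Role of point separation.} In your scheme, point separation of $\A(M)$ is used only to invert $\Psi_M$ on an already single-valued $G$. In a correct argument it is what kills monodromy:
\begin{itemize}
\item One continues the germ along curves in $D$, e.g.\ by Calabi's extension theorem for the real-analytic metric $\lambda\omega_D$ together with completeness of $\omega_M$.
\item Suppose continuation along a loop returns a second branch $f'$ near $z$. Continue the polarized identity $K_M(f(z),\overline{f(w)})=e^{h(z)+\overline{h(w)}}K_D(z,\bar w)^{\lambda}$ in $z$, with $w$ fixed near $p$.
\item This gives $K_M(f'(z),\bar\zeta)=c(z)\,K_M(f(z),\bar\zeta)$ for $\zeta=f(w)$ in an open set, hence for all $\zeta\in M$.
\item Therefore every element of $\A(M)$ vanishing at $f(z)$ vanishes at $f'(z)$, and $f'(z)=f(z)$ by point separation.
\end{itemize}

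\textbf{Step~3.} This step also needs repair in two places.
\begin{itemize}
\item The isometry does not give $d_M(F(z_k),F(p))=\sqrt\lambda\,d_D(z_k,p)$. It only bounds $\omega_M$-distances by $\sqrt\lambda$ times the $\omega_D$-length of paths along which $F$ is already defined.
\item $G(z_\infty)=\Psi_M(q)$, with $\Psi_M$ a local embedding on a small neighborhood $N_q$ of $q$, does not by itself put the image of a neighborhood of $z_\infty$ into $\Psi_M(M)$. You need an identity-theorem argument for the analytic set $G^{-1}(\Psi_M(N_q))$.
\end{itemize}

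The simplest fix is to replace Steps~2--3 by a direct appeal to Proposition~\ref{m2012}, exactly as the paper does.
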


The conclusion is optimal, as shown by the
non-surjective Bergman self-isometries constructed in Section~3.6.
The proof partially  follows from the novel ideas of 
 Huang and
Li in \cite{HL25a}. Nevertheless, by a local argument for the uniformization theorem, in their case, the domain is locally biholomorphically isometric to the complex unit ball, whose Bergman kernel function has nice properties and also there is no monodromy when extending the Bergman-Bochner map of the complex unit ball. In this paper, 
Calabi's extension theorem, in the form
developed by Huang and Li \cite{HL25b}, yields continuation of its germ
along every path in \(D_2\). 
The argument has to 
resolve the possible monodromy on a covering Riemann domain and
combine the Bergman-Bochner map with Irgens' \(L^2\)-envelope of
holomorphy and Josefson's theorem \cite{I04,J78}.
Moreover, upon dropping the pseudoconvexity assumption and imposing the normalization \(\lambda=1\), one can obtain certain partial results by applying the powerful moment theory  (cf. \cite{S17}). This setting is included in the more general framework considered in \cite{ETX25b}.

By the classical theorem of Nomizu and Yano \cite{NY67}, a strongly
curvature-preserving local biholomorphism between locally irreducible
Bergman metrics is a local holomorphic Bergman isometry. Then Theorem \ref{domain} and its local version yield the rigidity result for such strongly
curvature-preserving local biholomorphisms.
These results may be compared with generalizations of Lu's
theorem and of the recent theorem of Loi-Palmieri, which begin with an intrinsic curvature condition and identify
the resulting global model. Our results instead begin with a local
curvature equivalence between two arbitrary bounded domains and show
that this equivalence is globally induced by a biholomorphism, modulo a
pluripolar defect in the pseudoconvex case and without any defect in
the complete case. When the target is the unit ball, this recovers the
spirit of Lu's uniformization theorem. When a local curvature
equivalence with an irreducible bounded symmetric domain is given, it
provides the corresponding locally symmetric picture of
Loi-Palmieri. More generally, neither Bergman metric is required to
be locally symmetric: agreement of the full curvature jets along the
given map is sufficient.

The first perspective reappears in our Cartesian product theorem. 

\begin{theorem}\label{product}
Let \(D\Subset\C^n\), \(n\geq2\), be a simply connected, bounded,
strongly pseudoconvex domain with real analytic boundary and $U \subset D$ be a connected open subset.  For $1\leq j\leq m$, let
$  f_j: U\rightarrow D$
be a holomorphic map, and assume that every \(f_j\) has full rank
\(n\) at some point of \(U\).  If there exists $\lambda>0$ such that
\begin{equation*}
  \sum_{j=1}^{m}f_j^*\omega_D=\lambda\omega_D
\end{equation*}
holds on $U$, 
then each $  f_j$ extends to an element in $\Aut(D)$ and thus $ \lambda=m.$
\end{theorem}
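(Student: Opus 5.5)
Write $\omega_D=\ddc\log K_D(z,\bar z)$. The plan has four steps: polarize the isometry identity into a functional equation for Bergman kernels, extend the $f_j$ holomorphically across $\partial D$, show each extended map sends $\partial D$ into $\partial D$, and conclude that each $f_j$ is a proper self-map, hence an automorphism.

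\textbf{Step 1 (functional equation).} The hypothesis on $U$ says that $\sum_j \log K_D(f_j(z),\overline{f_j(z)})-\lambda\log K_D(z,\bar z)$ is pluriharmonic. After shrinking $U$ to a ball and polarizing, this gives
\begin{equation*}
\prod_{j=1}^m K_D\bigl(f_j(z),\overline{f_j(w)}\bigr)=K_D(z,\bar w)^{\lambda}\,\bigl|\phi(z,\bar w)\bigr|
\end{equation*}
in the form $\prod_j K_D(f_j(z),\overline{f_j(w)})=K_D(z,\bar w)^\lambda h(z)\overline{h(w)}$, with $h$ holomorphic and nonvanishing. I will keep the pluriharmonic form of this identity as the working statement.

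\textbf{Step 2 (extension).} For a simply connected strongly pseudoconvex domain with real analytic boundary, Fefferman's expansion gives
\begin{equation*}
K_D=\frac{\phi}{r^{n+1}}+\psi\log(-r),
\end{equation*}
with $\phi,\psi$ real analytic up to $\partial D$ and $r$ a real analytic defining function. The function $\log K_D$ therefore has Nash-algebraic-type behaviour near the boundary only modulo a logarithmic term, so I would instead use the Huang--Li form of Calabi's extension theorem \cite{HL25b}, the same tool used for Theorem~\ref{al2}. This continues each germ $f_j$ analytically along paths in $D$, as a multivalued map into $D$. Pairwise, the maps land where the identity forces $\sum_j\log K_D(f_j)\to\infty$ exactly when $\lambda\log K_D(z)\to\infty$.

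\textbf{Step 3 (boundary behaviour).} Fix a point $z$ of $D$ with $f_j$ of full rank there. Let $z\to p\in\partial D$ along a path, so that $\log K_D(z)\sim -(n+1)\log\delta(z)$. Because $K_D$ is bounded below on $D$ and bounded on compacts, this forces some $f_j(z)$ to approach $\partial D$. The goal is that all of them do. For this I would use the Hopf lemma and the comparison $\log K_D\approx -(n+1)\log\delta$: each holomorphic $f_j$ satisfies $\delta(f_j(z))\gtrsim\delta(z)$ (a Schwarz-lemma-type estimate), so each term is at most $(n+1)\log(1/\delta(z))+O(1)$. Comparing the coefficient of $\log(1/\delta)$ on both sides, using the full-jet pluriharmonic identity along a disc transverse to $\partial D$, shows that each $f_j$ with full rank is proper on the continuation.

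\textbf{Step 4 (conclusion).} Each continuation is then a proper holomorphic self-map of $D$, defined on the simply connected $D$ by monodromy. Since $n\ge2$ and $D$ is simply connected and strongly pseudoconvex, Alexander-type rigidity shows it is an automorphism. Automorphisms are Bergman isometries, so each $f_j^*\omega_D=\omega_D$, and summing gives $\lambda=m$. The monodromy issue is settled because $D$ is simply connected and the continuation stays in $D$.

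I expect Step 3 to be the main obstacle. The difficulty is ruling out that some $f_j$ stays interior while the others carry all the boundary blow-up, which would give a limit $\lambda<m$. The first thing to try is to compare both sides of the identity restricted to a complex disc approaching $\partial D$. The plan there is to invoke the Huang--Li result from \cite{HL25b} that an isometry into a product with a factor nondegenerate on an open set must, by strong pseudoconvexity, have each factor proper. Failing that, one can apply $\ddc$ near the boundary, where $\omega_D$ is asymptotically the ball metric, and use the curvature bound $-\tfrac{2}{n+1}$ factor by factor to force equality.
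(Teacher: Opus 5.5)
There is a genuine gap, and it is where you suspected: nothing in Steps 2--3 shows that any factor $f_j$ sends boundary to boundary.

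\textbf{Step 3 fails.} The coefficient comparison cannot produce properness. Along a path $z\to p\in\partial D$, you have the identity $\sum_j\log K_D(f_j(z))=\lambda\log K_D(z)+\log|h(z)|^2$ and the Schwarz-type bound $\delta(f_j(z))\gtrsim\delta(z)$. Together these only say that the individual blow-up rates, each in $[0,n+1]$, add up to $\lambda(n+1)$. Even that holds only up to the contribution of the pluriharmonic term $\log|h|^2$, which you do not control near $\partial D$. Nothing here excludes a factor with rate $0$, which is exactly the scenario ($\lambda<m$) you need to rule out. The statement you attribute to \cite{HL25b} is not in that paper; it is a Calabi-type extension theorem for isometries into space forms. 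The curvature fallback is not an argument.

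\textbf{Step 2 does not supply what a Hopf-lemma argument needs.} Calabi/Huang--Li continuation runs along paths \emph{inside} $D$ and produces maps into $\PP^\infty$, not into $D^m$. Getting a map into $D^m$ requires Mok's extension theorem and the completeness of $\omega_{D^m}$. In any case, this continuation gives no holomorphic extension across $\partial D$.

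\textbf{The two missing ideas, as the paper uses them.}

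First, boundary extension. By Zorn's theorem, the Bergman kernels of $D$ and $D^m$ extend past the closure, holomorphically in $z$ for $w$ in compacts. Mok's argument in \cite{M12} then shows that $\Graph(F)$, with $F=(f_1,\dots,f_m)$, extends to an irreducible $n$-dimensional analytic set $S$ over neighborhoods of $\overline D$ and $\overline{D^m}$. Outside the projection of the bad locus of $S$, which has real dimension at most $2n-2$, $F$ extends holomorphically across points of $\partial D$. Being proper, it maps a boundary piece into $\partial(D^m)=\bigcup_j\{w_j\in\partial D\}$.

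Second, do not try to make all factors proper at once; peel them off one at a time.
\begin{itemize}
\item Baire category gives a single $f_{j_0}$ mapping an open piece of $\partial D$ into $\partial D$.
\item The Hopf lemma makes $f_{j_0}$ transversal there, hence locally biholomorphic.
\item Nemirovski--Shafikov \cite{NS05}, which is where simple connectivity and real analyticity are used, extends $f_{j_0}$ to an element of $\Aut(D)$.
\item Then $f_{j_0}^*\omega_D=\omega_D$, so $\sum_{j\ne j_0}f_j^*\omega_D=(\lambda-1)\omega_D$, with $\lambda-1>0$ if $m\ge2$. Induction on $m$ concludes.
\end{itemize}
This peeling-off is what disposes of the scenario that blocks your Step 3.

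Your Step 4 would be fine if you had properness of every factor. The proposal never reaches that point.
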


 Inspired by the work of Mok \cite{M02}, Clozel-Ullmo \cite{CU03} and 
Mok-Ng \cite{MN12} for bounded symmetric domains, this result extends
componentwise rigidity to a broad nonhomogeneous class. Its proof
combines Mok's extension theorem with boundary analytic continuation \cite{M12}
and the uniformization theorem of Nemirovski and Shafikov
\cite{NS05}. Following the framework of Clozel-Ullmo, we obtain
applications to finite analytic correspondences and show that
metric-preserving correspondences on regular quotients are modular.

The present paper mainly treats the equidimensional problem. 
The positive-codimensional problem will be addressed in a forthcoming
work.

\section{Preliminaries}

For a bounded domain \(G\subset\C^n\), let $  \A(G)=L^2(G)\cap\cO(G)$ be the Bergman space.
  Its reproducing kernel is denoted
by \(K_G(z,w)\). 
 The fundamental form of the Bergman metric is denoted by
$ \omega_G=\ddc\log K_G(z,z)$ (which is also called Bergman metric). 
It is positive definite on every bounded domain.  Indeed, constants and
affine-linear functions belong to \(\A(G)\).  Given \(z\in G\) and
\(0\ne v\in T_zG\), choose a complex-linear functional \(L\) with
\(L(v)\ne0\) and set \(\ell(w)=L(w-z)\).  Then
\(\ell(z)=0\) and \(d\ell_z(v)\ne0\), whereas a nonzero constant has
zero derivative.  Hence the derivative of the projective evaluation
map cannot vanish in the direction \(v\).  Thus the Bergman-Bochner
map below is an immersion, and the pullback of the Fubini-Study form
is positive definite.

If \((\phi_j)_{j\geq 0}\) is an orthonormal basis of \(\A(G)\), then
\[ K_G(z,w)=\sum_{j=0}^{\infty}  \phi_j(z)\overline{\phi_j(w)}\]
locally uniformly on \(G\times G\).  In particular,
\[  K_G(z,z)=\sum_{j=0}^{\infty}|\phi_j(z)|^2.\]
Write \(\PP^\infty:=\PP(\ell^2)\), the projective Hilbert space.  It is
a Hausdorff complex Hilbert manifold; its holomorphic maps have local
holomorphic \(\ell^2\setminus\{0\}\)-valued lifts, unique up to a
nowhere-vanishing holomorphic scalar.  Consequently, the
Bergman-Bochner map
\[  \cB_G:G\rightarrow\PP^\infty,\quad \cB_G(z)=[\phi_0(z):\phi_1(z):\cdots] \]
is holomorphic and satisfies
\begin{equation}\label{eq:BB-pullback}
  \cB_G^*\omega_{\FS}
  =\ddc\log\sum_{j=0}^{\infty}|\phi_j|^2
  =\omega_G.
\end{equation}
The compact-local convergence of the kernel series also says that
\(z\mapsto(\phi_j(z))_{j\geq0}\) is a holomorphic \(\ell^2\)-valued map
in the sense used for the Hilbert projective space \(\PP^\infty\).
It follows that is $\cB_G$ is a holomorphic isometry from
\((\Omega,\omega_\Omega)\) into
\((\PP^\infty,\omega_{\FS})\).

Let $M$ be a complex manifold of complex dimension $n$. Throughout this paper, all complex manifolds are assumed to be connected. 
We denote by
$$    \A(M)  =   \left\{  \varphi\in \OO_{(n, 0)}(M):  \ii^{\,n^{2}}\int_M \varphi\wedge\overline{\varphi}<\infty \right\} $$
the Bergman space of square-integrable holomorphic $(n, 0)$-forms.  
We say that $\A(M)$ separates points if, whenever $p,q\in M$ and
$p\neq q$, there exists $\varphi\in\A(M)$ such that
$  \varphi(p)=0,    \varphi(q)\neq 0.$
The Bergman metric $\omega_M$ and the Bergman-Bochner map $\cB_M: M \to \PP^\infty$ can be defined similarly. Note that 
 $\A(M)$ is base point free if and only if $\cB_M$ is defined everywhere on $M$;
$\A(M)$ separates points if and only if $\cB_M$ is injective; if $\cB_M$ is an immersion, then the Bergman metric $\omega_M = \cB_M^* \omega_{\FS}$ is well defined.

\begin{definition}
Let $M_1, M_2$ be complex manifolds of complex dimension $n, N$, respectively, with  well defined Bergman metrics $\omega_{M_1}, \omega_{M_2}$. 
A holomorphic map $f: M_1 \rightarrow M_2$ is called {\it holomorphic Bergman isometry} if there exists a positive number $\lambda$ such that  
\begin{equation}\label{bgiso}
f^* \omega_{M_2} = \lambda \omega_{M_1}.
\end{equation}
If $U$ is a connected open subset of $M_1$ and a holomorphic map $f: U \rightarrow M_2$ satisfies (\ref{bgiso}) on $U$, then $f$ is called a local holomorphic Bergman isometry from $M_1$ to $M_2$. 
 $f$ is called bona fide if $\lambda=1$. 
\end{definition}

The injectivity of holomorphic Bergman isometries between bounded domains in the complex Euclidean spaces is obtained in \cite{M12}. 
The proof extends verbatim to general complex manifolds. 
For reader's convenience, we include the detailed proof here. The idea of proof is essentially  from \cite{M12, HL25a}.

\begin{proposition}\label{inj}
Let $M_1, M_2$ be complex manifolds of complex dimension $n, N$, respectively, with  well defined Bergman metrics $\omega_{M_1}, \omega_{M_2}$. 
Suppose that the Bergman space $\A(M_1)$ separates points of $M_1$.  
If $f: M_1 \to M_2$ is a holomorphic Bergman isometry satisfying
$ f^*\omega_{M_2}=\lambda\omega_{M_1}$ 
for some $\lambda>0$, then $f$ is injective.
\end{proposition}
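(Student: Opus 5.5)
Suppose $f(p)=f(q)$ with $p\neq q$; we derive a contradiction from the diastasis. The plan uses Calabi's diastasis and the fact that a holomorphic isometry preserves diastasis up to the constant.

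\emph{Step 1: the diastasis identity.} For $M_j$, work with the Bergman kernel form $K_{M_j}(z,w)$, written in local coordinates as a function. Near points where $K_{M_j}(z,z)\neq 0$, consider the Calabi diastasis
\[
\mathcal D_{M_j}(z,w)=\log\frac{K_{M_j}(z,z)K_{M_j}(w,w)}{|K_{M_j}(z,w)|^2},
\]
which is independent of the choice of coordinates. It equals $-\log$ of the squared Fubini--Study cosine of the angle between $\cB_{M_j}(z)$ and $\cB_{M_j}(w)$, so it lies in $[0,+\infty]$. Since $f^*\omega_{M_2}=\lambda\omega_{M_1}$, the local potentials $\log K_{M_2}(f(z),f(z))$ and $\lambda\log K_{M_1}(z,z)$ differ by a pluriharmonic function. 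Polarize this, using the uniqueness of sesqui-holomorphic extension of real-analytic functions. This gives $\mathcal D_{M_2}(f(z),f(w))=\lambda\,\mathcal D_{M_1}(z,w)$ for $w$ near $z$, by Calabi's characterization of the diastasis.

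\emph{Step 2: globalize along $M_1\times M_1$.} Fix $p$. The function $w\mapsto |K_{M_2}(f(p),f(w))|^2$ is real-analytic off the zero set, and so is $w\mapsto |K_{M_1}(p,w)|^{2\lambda}$. The identity
\[
\frac{|K_{M_2}(f(p),f(w))|^2}{K_{M_2}(f(w),f(w))\,K_{M_2}(f(p),f(p))}
=\Big(\frac{|K_{M_1}(p,w)|^2}{K_{M_1}(w,w)K_{M_1}(p,p)}\Big)^{\lambda}
\]
holds for $w$ near $p$. Both sides are real-analytic on the connected set $M_1\times M_1$ minus proper analytic sets, and bounded in $[0,1]$, so it extends to all $w\in M_1$ by the identity theorem. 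The main obstacle is to handle this step carefully. The right side involves a $\lambda$-th power, which is real-analytic only away from $\{K_{M_1}(p,\cdot)=0\}$, and at base points (zeros of $K(z,z)$) one must use the coordinate-free projective version on $\PP^\infty$. I would first establish the identity as an identity of the Fubini--Study angles $\cos^2\angle(\cB(\cdot),\cB(\cdot))$ on the complement of the base loci. That complement is connected because the base loci are proper analytic subsets, and I would then extend by continuity.

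\emph{Step 3: conclude.} Put $w=q$ with $f(q)=f(p)$. The left side equals $1$, since it is the cosine of the zero angle. Hence $|K_{M_1}(p,q)|^2=K_{M_1}(p,p)K_{M_1}(q,q)$, i.e.\ equality holds in Cauchy--Schwarz for the reproducing kernels. So $K_{M_1}(\cdot,p)$ and $K_{M_1}(\cdot,q)$ are proportional, that is, $\cB_{M_1}(p)=\cB_{M_1}(q)$. Since $\A(M_1)$ separates points, $\cB_{M_1}$ is injective by the remark in the preliminaries, so $p=q$, a contradiction. If $p$ or $q$ lies in the base locus, separation of points already excludes this: an element vanishing at one and not the other, combined with the limit of the identity, forces the contradiction.
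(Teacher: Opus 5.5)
Your proposal is correct and follows essentially the same route as the paper: the diastasis identity near the base point, continuation across the zero set of $K_{M_1}(p,\cdot)$ (whose complement is connected), equality in Cauchy--Schwarz, and point separation. The one real difference is that you work with $e^{-\mathcal D}\in[0,1]$, which stays continuous where $K_{M_1}(p,\cdot)$ vanishes, so you get the identity at $w=q$ directly by continuity, whereas the paper first rules out $K_{M_1}(b,a)=0$ by a separate limiting argument ($0$ versus $+\infty$); your base-locus caveats are unnecessary, since point separation on $M_1$ and the well-definedness of both Bergman metrics already give $K_{M_1}(z,z)>0$ and $K_{M_2}(x,x)>0$ everywhere.
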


\begin{proof}
Since $f$ is a holomorphic Bergman isometry from $M_1$ to $M_2$, $f$ is a holomorphic immersion.
We shall prove that $f$ is globally one-to-one. Suppose that 
there exist points $a,b\in M_1$ such that
$  f(a)=f(b).$
We will show that this forces $a=b$.

For a complex manifold $M$ with well-defined Bergman metric, recall the Calabi's diastasis centered at
$w \in M$ by
\[    D_M(z,w)      = \log  \frac{K_M(z,z)K_M(w,w)}  {|K_M(z,w)|^2},\]
wherever this expression is defined, namely at those $z \in M$ such that $K_M(z, w)\neq0$. Moreover, for fixed $w \in M$, 
$\partial_z \bar\partial_z   D_M(z,w) = \partial_z \bar\partial_z \log K_M(z,z)$. 
For $z$ near $a$, both $K_{M_1}(z,a)$ and $K_{M_2}(f(z),f(a))$ are nonzero. Hence the functions
$        D_{M_1}(z,a)$ and $        D_{M_2}(f(z),f(a))$
are well-defined near $a$.

Since $f$ is a Bergman isometry, 
we have
\[ \sqrt{-1}\,\partial_z \bar\partial_z D_{M_2}(f(z),f(a))   =  \lambda \sqrt{-1}\,\partial_z \bar\partial_z D_{M_1}(z,a). \]
Thus $        D_{M_2}(f(z),f(a))-    \lambda D_{M_1}(z,a)$
is pluriharmonic near $a$.
By the uniqueness of Calabi's diastasis, both sides have zero value at $z=a$
and have no purely holomorphic or purely antiholomorphic terms in their Taylor
expansions at $a$. 
Hence
\begin{equation}\label{diastasis identity}
        D_{M_2}(f(z),f(a))  =  \lambda D_{M_1}(z,a)
\end{equation}
holds for $z$ sufficiently close to $a$ \cite{C53}.

Define $        H(z):=K_{M_1}(z,a)K_{M_2}(f(z),f(a)).$
Then 
$ H(a)=K_{\Omega_1}(a,a)K_{\Omega_2}(f(a),f(a))\neq0$, and $H$ is holomorphic section of a holomorphic line bundle over $M_1$.  
Let $ Z  = \{z\in M_1:H(z)=0\}$. Then $Z$ is 
 a proper complex analytic subset of $M_1$.
It follows that the complement
$   M_1\setminus Z$ is connected. 
Both sides are real analytic on this connected open set and agree near \(a\); hence
(\ref{diastasis identity}) holds
for all $z\in M_1\setminus\{H=0\}$.

We now claim that $        K_{M_1}(b,a)\neq0.$ 
Suppose instead that $   K_{M_1}(b,a)=0.$
Since
\[ K_{M_2}(f(b),f(a))=K_{M_2}(f(a),f(a))>0, \]
we may choose a sequence $z_\nu\to b$ such that
$   H(z_\nu)\neq0.$
For such $z_\nu$, one has
\[ D_{M_2}(f(z_\nu),f(a)) =  \lambda D_{M_1}(z_\nu,a).\]
As $\nu\to\infty$, the left-hand side tends to
\[ D_{M_2}(f(b),f(a)) = D_{M_2}(f(a),f(a))  = 0.\]
But the right-hand side tends to $+\infty$, because
\[ K_{M_1}(z_\nu,a)\to K_{M_1}(b,a)=0\]
while
\[ K_{M_1}(z_\nu,z_\nu)\to K_{M_1}(b,b)>0,  \quad K_{M_1}(a,a)>0.\]
This contradiction proves that $ K_{M_1}(b,a)\neq0.$

Therefore (\ref{diastasis identity}) holds  at $z=b$. Since
$f(b)=f(a)$, we obtain
\[  0 = D_{M_2}(f(b),f(a))  =  \lambda D_{M_1}(b,a).\]
Because $\lambda>0$, this implies
$  D_{M_1}(b,a)=0.$ 
Thus
\[  \log \frac{K_{M_1}(b,b)K_{M_1}(a,a)}{|K_{M_1}(b,a)|^2} = 0,\]
or equivalently,
\[ |K_{M_1}(b,a)|^2 = K_{M_1}(b,b)K_{M_1}(a,a).\]
This is precisely the equality case in the Cauchy-Schwarz inequality.  
Hence $  K_{M_1}(\cdot,a), K_{M_1}(\cdot,b)$ are
linearly dependent. Namely, there exists a constant $c\in\mathbb C\setminus \{0\}$ such
that $  K_{M_1}(\cdot,b)=cK_{M_1}(\cdot,a).$ 
By the reproducing property, 
$ h(b)=\overline c\, h(a)$ for every $h\in A^2(M_1)$.
Therefore, for every $h \in\A(M_1)$,
if  $  h(a)=0$, then $h(b)=0$. It follows from the point separation property of $\A(M_1)$ that $a=b$. 
Hence $f$ is injective.
\end{proof}

In order to study the local holomorphic Bergman isometries between complex manifolds, we need the following deep results due to Mok and Huang-Li. Although Mok only considered the case of bounded domains in the complex Euclidean space,  
his proof in \cite{M12} extends verbatim to yield the following result.

\begin{proposition}[Mok]\label{m2012}
Let \(M_1,M_2\) be complex manifolds with well-defined Bergman metrics. Assume that the Bergman space \(A^2(M_2)\) separates points in $M_2$ and that \((M_2, \omega_{M_2})\) is complete. Suppose there exists a connected open set $U$ in $M_1$ and a holomorphic map $f: U \to M_2$ such that $f^*\omega_{M_2} =\lambda \omega_{M_1}$ on $U$ for some $\lambda>0$. Then $f$ extends to a holomorphic Bergman isometry $F : M_1 \to M_2$.
\end{proposition}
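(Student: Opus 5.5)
The plan is analytic continuation of $f$ along paths in $M_1$, in the spirit of Calabi's and Mok's arguments. Completeness of $\omega_{M_2}$ keeps the continuation from escaping to infinity in $M_2$. Point separation of $A^2(M_2)$ makes the continuation single-valued, because it lets us recover $f$ from the Bergman--Bochner data of $M_1$.

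\textbf{Step 1 (reformulation via Bergman--Bochner maps and Calabi rigidity).} On $U$ we have $(\cB_{M_2}\circ f)^*\omega_{\FS}=\lambda\,\cB_{M_1}^*\omega_{\FS}$. Fix $a\in U$ and use diastasis as in the proof of Proposition~\ref{inj}. This gives $D_{M_2}(f(z),f(a))=\lambda D_{M_1}(z,a)$ near $a$, and by polarization
\[
\frac{K_{M_2}(f(z),f(w))^{\lambda^{-1}}}{\bigl(K_{M_2}(f(z),f(z))K_{M_2}(f(w),f(w))\bigr)^{1/(2\lambda)}}\text{-type identities}
\]
relating $K_{M_1}(z,w)$ and $K_{M_2}(f(z),f(w))$ for $z,w$ near $a$. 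The more usable form is Calabi's rigidity in $\PP^\infty$. After normalizing $K_{M_1}(z,a)$ and $K_{M_2}(f(z),f(a))$ by the value at $a$, the functions $z\mapsto K_{M_2}(f(z),f(w))/K_{M_2}(f(z),f(a))$ for $w$ near $a$ become holomorphic expressions in $K_{M_1}(z,\cdot)$ (raised to the power $\lambda$). Since $K_{M_1}$ is globally defined on $M_1$, these expressions give a candidate continuation of $\cB_{M_2}\circ f$ along paths, off the zero sets of the normalizing kernels. Concretely, Calabi's theorem says the isometric immersion germ $\cB_{M_2}\circ f$ is determined by $\lambda\,\omega_{M_1}$, and the induced local Kähler potential $\lambda\log K_{M_1}$ extends along any path.

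\textbf{Step 2 (continuation along paths).} Take a path $\gamma\colon[0,1]\to M_1$ from $a$, and let $t^*$ be the supremum of the times up to which $f$ continues holomorphically with $f^*\omega_{M_2}=\lambda\omega_{M_1}$. The isometry property gives
\[
d_{M_2}\bigl(f(\gamma(s)),f(\gamma(t))\bigr)\le\sqrt\lambda\,\mathrm{length}_{\omega_{M_1}}\bigl(\gamma|_{[s,t]}\bigr),
\]
so $f(\gamma(t))$ is Cauchy as $t\to t^*$. Completeness of $\omega_{M_2}$ then makes it converge to some $q\in M_2$. I then need a uniform radius of continuation near $q$. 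The key tool is the bound $\|df\|\le\sqrt\lambda$ in metric norms, together with uniform local geometry (equicontinuity) of $\omega_{M_2}$ on a compact neighborhood of $q$. I would show that $f$ near $\gamma(t)$, with $t$ close to $t^*$, extends over a ball of fixed size. The plan is to solve for $f$ as the unique local isometric map matching the Calabi data. The local isometric germs from a fixed neighborhood of $\gamma(t^*)$ into a neighborhood of $q$ form a normal family (Montel, via the derivative bound), and a limit germ at $\gamma(t^*)$ is still a Bergman isometry. By Calabi uniqueness of the diastasis, this limit germ agrees with the continuation on overlaps. Hence $t^*=1$ and the path is covered.

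\textbf{Step 3 (monodromy).} Two continuations along homotopic paths agree by the usual monodromy argument, but $M_1$ need not be simply connected. To get single-valuedness, I use point separation of $A^2(M_2)$. At an endpoint $z$, a continuation $\tilde f$ arrives with some value $\tilde f(z)$. Comparing with the original germ, the identity $D_{M_2}(\tilde f(z),f(a))=\lambda D_{M_1}(z,a)$ continues along the path, since both sides are real analytic and the identity is preserved where the kernels are nonzero. Equivalently, in polarized form, $K_{M_2}(\tilde f(z),f(w))$ for all $w$ near $a$ is determined by $K_{M_1}(z,w)$, which depends only on $z$. Two continuations $\tilde f_1,\tilde f_2$ reaching $z$ therefore satisfy $K_{M_2}(\cdot,\tilde f_1(z))=c\,K_{M_2}(\cdot,\tilde f_2(z))$ on the open set $f(U')$. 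By the identity theorem this holds on all of $M_2$. Point separation then forces $\tilde f_1(z)=\tilde f_2(z)$, as at the end of the proof of Proposition~\ref{inj}. The result is a global $F\colon M_1\to M_2$ with $F^*\omega_{M_2}=\lambda\omega_{M_1}$.

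I expect the main obstacle to be the uniform extension radius in Step 2, together with making the polarized kernel identity rigorous across the zero sets of $K_{M_1}(\cdot,a)$ in Step 3. For the latter, I would first try to perturb the base point $a$ within $U$ to avoid these zero sets.
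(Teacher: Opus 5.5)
\textbf{Step~2 has a genuine gap, and it sits exactly where the theorem is hard.} Your overall skeleton is sound: continuation along paths, completeness to keep the endpoint inside $M_2$, and point separation for single-valuedness.

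The normal-family argument presupposes that the continued germs near $\gamma(t)$ are all defined on one fixed neighbourhood of $\gamma(t^*)$. That is precisely what has to be proved. A priori their domains may shrink to zero as $t\to t^*$, and Montel's theorem says nothing without a common domain.

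The bound $\|df\|\le\sqrt\lambda$ together with $f(\gamma(t))\to q$ cannot replace this. For instance, $h(z)=\sum_{k\ge1}z^{2^k}/(k^2 2^k)$ has bounded derivative on the unit disc and $h(t)$ converges as $t\to1^-$, yet the unit circle is its natural boundary. The continuation must come from the real-analytic rigidity carried by the globally defined Bergman kernels.

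With the objects of your Step~1 this can be done as follows.
\begin{itemize}
\item By Calabi's extension theorem (as used in the proof of Theorem~\ref{main1}), the isometry $\Psi=\cB_{M_2}\circ f$ of $(U,\lambda\omega_{M_1})$ into $(\PP^\infty,\omega_{\FS})$ continues along $\gamma$, including to a whole ball around $\gamma(t^*)$.
\item Since $\cB_{M_2}$ is an immersion, it embeds a small neighbourhood $W$ of $q$ onto a complex submanifold $S$. Near $\cB_{M_2}(q)$, $S$ is a holomorphic graph over $d\cB_{M_2}(T_qM_2)$, so it is closed there and cut out by holomorphic equations.
\item For $t$ close to $t^*$, $\Psi=\cB_{M_2}\circ f$ maps an open set near $\gamma(t)$ into $S$, because $f(\gamma(t))\to q$.
\item By the identity principle, $\Psi(B')\subset S$ for a connected ball $B'\ni\gamma(t^*)$. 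Then $(\cB_{M_2}|_W)^{-1}\circ\Psi$ continues $f$ past $t^*$.
\end{itemize}
Your proposal never links the $\PP^\infty$-valued continuation to the limit point $q$. That link is where completeness is actually used.

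\textbf{Step~3 has a second gap.} The proposition does not assume $\dim M_1=\dim M_2$. Mok's theorem allows $\dim M_2\ge\dim M_1$, and the paper applies it to maps into $D^m$. When $\dim M_2>\dim M_1$, $f(U')$ is a submanifold of positive codimension, not an open set. So proportionality of the two kernel functions on $f(U')$ does not propagate to $M_2$ by the identity theorem. Even in equal dimensions, you would still have to show that your constant $c$, which absorbs the normalizations and the branch of the $\lambda$-th power, does not depend on $w$.

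Both problems disappear with the diagonal trick of Proposition~\ref{inj}.
\begin{itemize}
\item Reduce to a loop $\sigma$ at $a$, and let $g$ be the continuation of $f$ along $\sigma$.
\item For each fixed $w$ near $a$, continue the real identity $D_{M_2}(\tilde f(z),f(w))=\lambda D_{M_1}(z,w)$ in $z$ along $\sigma$. It contains no branch of a $\lambda$-th power. It passes across kernel zeros by the limiting argument given in Proposition~\ref{inj}. This gives the identity for $z,w$ near $a$ with $g$ in place of $\tilde f$.
\item Set $w=z$ to get $D_{M_2}(g(z),f(z))=0$. This is equality in Cauchy--Schwarz, so $K_{M_2}(\cdot,g(z))$ and $K_{M_2}(\cdot,f(z))$ are proportional in $\A(M_2)$.
\item Point separation then gives $g=f$, in every dimension.
\end{itemize}
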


\begin{proposition}[Proposition 3.1 and Remark 3.4 in \cite{HL26}]\label{hl26}
Let $M$ be a Stein manifold of complex dimension $n$ with well-defined Bergman metric $\omega_M$,
and let $D \subset \CC^n$ be a bounded pseudoconvex domain with complete Bergman metric $\omega_D$. Suppose there exists a connected open set $U$ in $M$ and a holomorphic map $f: U \to D$ such that $f^*\omega_D =\lambda \omega_M$ on $U$ for some $\lambda>0$. 
Then $f$ extends to a holomorphic Bergman isometry $F : M \to D$ and the Bergman space $\A(M)$ separates points of $M$.
\end{proposition}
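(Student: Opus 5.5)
The extension part I would reduce directly to Proposition~\ref{m2012}. A bounded domain $D\subset\CC^n$ has $\A(D)$ containing all affine functions, so $\A(D)$ separates points. By hypothesis $(D,\omega_D)$ is complete, and $M$ has a well-defined Bergman metric. With $M_1=M$ and $M_2=D$, Mok's extension result then produces a holomorphic map $F:M\to D$ with $F|_U=f$ and $F^*\omega_D=\lambda\omega_M$ on all of $M$. Since $\omega_M$ is positive definite and the dimensions agree, $F$ is an immersion, hence a local biholomorphism. Stein-ness of $M$ plays no role in this step.

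For point separation, I would first globalize the diastasis identity exactly as in the proof of Proposition~\ref{inj}. Fix $w\in M$. The functions $D_D(F(z),F(w))$ and $\lambda D_M(z,w)$ differ by a pluriharmonic function near $w$. Calabi's normalization makes them equal near $w$. Analytic continuation then makes them equal off the proper analytic set $\{K_M(z,w)K_D(F(z),F(w))=0\}$, and the same limiting argument shows $K_M(z,w)\neq 0$ whenever $F(z)=F(w)$. Next, suppose $p\neq q$ but $\A(M)$ fails to separate them; after the usual reduction this means $\cB_M(p)=\cB_M(q)$, i.e.\ $K_M(\cdot,p)=c\,K_M(\cdot,q)$. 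Then $D_M(p,q)=0$, and by the diastasis identity $D_D(F(p),F(q))=0$. Equality in Cauchy--Schwarz in $\A(D)$, together with point separation in $D$, gives $F(p)=F(q)$. Thus every fiber of $\cB_M$ lies in a fiber of $F$. Because $F$ is a local biholomorphism, this produces a biholomorphic germ $\sigma=(F|_{V_q})^{-1}\circ F|_{V_p}$ from a neighbourhood of $p$ to a neighbourhood of $q$. The germ $\sigma$ preserves $\omega_M$ and also preserves the diastasis to every base point: $D_M(\sigma z,w)=D_M(z,w)$.

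The hard step is excluding such a nontrivial $\sigma$, since we do not yet know that $F$ is injective; Proposition~\ref{inj} needs exactly the separation property we are trying to prove, so that argument would be circular. My first attempt would use Stein-ness of $M$. Holomorphic functions on $M$ separate points, and the Bergman kernel of a Stein manifold exhausted by strongly pseudoconvex domains $M_k$ can be approximated by the kernels $K_{M_k}$, whose Bergman spaces do separate points. The diastasis-invariance of $\sigma$ says $|K_M(\sigma z,w)|^2K_M(z,z)=|K_M(z,w)|^2K_M(\sigma z,\sigma z)$ for all $w$. I would try to extend $\sigma$, along the covering structure given by $F$, to a global Bergman isometry of $M$ that commutes with $F$. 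I would then show that the $(n,0)$-forms $F^*(h\,dz)$, with $h\in\A(D)$ and cut off on pieces of a locally finite cover where $F$ is injective, combined with H\"ormander's $L^2$ estimates on the Stein manifold $M$, yield an element of $\A(M)$ vanishing at $p$ but not at $q$. If this is too lossy, I would fall back on the route Huang and Li use in \cite{HL26}: work with the Bergman representative coordinates of $M$ centered at $p$, where $F$ becomes affine by the kernel identity, and derive injectivity of $\cB_M$ from that.
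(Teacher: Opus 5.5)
Your extension step is correct. It is how the paper itself uses Proposition~\ref{m2012}; the paper gives no proof of this statement and only cites Huang--Li. Your diastasis step is also correct. Combined with the argument of Proposition~\ref{inj}, which uses point separation only in its last line, it shows that the fibers of $F$ and of $\cB_M$ coincide. So point separation is equivalent to injectivity of $F$, and the diastasis identity is fully compatible with a nontrivial germ $\sigma$; it cannot be what rules $\sigma$ out.

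\textbf{The gap.} Ruling out $\sigma$ is exactly the step you leave open, and none of your three routes closes it.
\begin{itemize}
\item Stein-ness plus an exhaustion by strongly pseudoconvex $M_k$ cannot suffice. Limits of point-separating Bergman spaces need not separate points, and Huang--Li's Stein manifolds with non-separating Bergman spaces show that some input from $F$ is indispensable.
\item The representative-coordinate fallback fails structurally. $T_p(z)$ is built from $\partial_{\bar w}\log K_M(z,w)|_{w=p}$, which is unchanged when $K_M(z,\cdot)$ is replaced by a constant multiple. So representative coordinates factor through $\cB_M$ and can never separate two points with the same Bergman--Bochner image.
\item Extending $\sigma$ to a global isometry is unjustified, since nothing says $F$ is a covering onto its image, and it is unnecessary.
\item Invoking H\"ormander's estimate without specifying a weight gives no pointwise control of the correction term at $p$ and $q$.
\end{itemize}

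\textbf{The missing ingredient is the weight, and $F$ supplies it directly.} No analysis of $\sigma$ is needed. Given $p\neq q$, set $\phi=2n\log|F-F(p)|+2n\log|F-F(q)|+|F|^2$. Then:
\begin{itemize}
\item $\phi$ is plurisubharmonic and bounded above, because $D$ is bounded.
\item $\ddc\phi\geq\ddc|F|^2$, and $\ddc|F|^2$ is a K\"ahler form on $M$ because $F$ is a local biholomorphism.
\item In local coordinates, $e^{-\phi}\geq c\,|z-p|^{-2n}$ near $p$ and $e^{-\phi}\geq c\,|z-q|^{-2n}$ near $q$. This holds even when $F(p)=F(q)$.
\end{itemize}
Take the nowhere-vanishing form $\alpha=dF_1\wedge\cdots\wedge dF_n$. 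Take a cutoff $\chi$ equal to $1$ near $q$, supported in a small ball around $q$ that avoids $p$ and the other points of the closed discrete set $F^{-1}(F(p))\cup F^{-1}(F(q))$.

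H\"ormander's estimate for $(n,1)$-forms with singular weight on the Stein manifold $M$ (Demailly's version) gives $u$ with $\bar\partial u=\bar\partial\chi\wedge\alpha$ and $\int_M|u|^2e^{-\phi}<\infty$. Consequently:
\begin{itemize}
\item $u\in L^2$, since $\sup\phi<\infty$.
\item $u$ is holomorphic near $p$ and near $q$, and vanishes at both points because $e^{-\phi}$ is not integrable there.
\item $\varphi=\chi\alpha-u$ lies in $\A(M)$, with $\varphi(p)=0$ and $\varphi(q)=\alpha(q)\neq0$.
\end{itemize}
This argument uses only that $F$ is a bounded local biholomorphism on a Stein manifold, which is where the Stein hypothesis genuinely enters. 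Your diastasis reduction is correct but not needed.
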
 

The following results follow by combining Proposition \ref{inj}, Proposition \ref{m2012} and Proposition \ref{hl26}. 

\begin{corollary}\label{yy}
Let $M$ be a Stein manifold of complex dimension $n$ with well-defined Bergman metric $\omega_M$,
and let $D \subset \CC^n$ be a bounded pseudoconvex domain with complete Bergman metric $\omega_D$. Suppose there exists a connected open set $U$ in $M$ and a holomorphic map $f: U \to D$ such that $f^*\omega_D = \lambda \omega_M$ on $U$ for some $\lambda>0$.
Then $f$ extends to an injective holomorphic Bergman isometry $F : M \to D$.
\end{corollary}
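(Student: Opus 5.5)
The plan is to apply the three earlier results in sequence. The first step is Proposition \ref{hl26}. Its hypotheses are exactly those of Corollary \ref{yy}: $M$ is Stein with well-defined Bergman metric, $D$ is bounded pseudoconvex with complete Bergman metric, and $f:U\to D$ satisfies $f^*\omega_D=\lambda\omega_M$ on a connected open set $U$. So it yields a holomorphic map $F:M\to D$ that extends $f$ and satisfies $F^*\omega_D=\lambda\omega_M$ on all of $M$. The same proposition also tells us that the Bergman space $\A(M)$ separates points of $M$.

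The second step applies Proposition \ref{inj} with $M_1=M$ and $M_2=D$. Both Bergman metrics are well defined: for $M$ this is a hypothesis, and for $D$ it holds because $D$ is bounded, as shown in the preliminaries. Point separation for $\A(M_1)$ comes from Step 1, and $F$ is a global holomorphic Bergman isometry with constant $\lambda$. Proposition \ref{inj} therefore gives that $F$ is injective. Proposition \ref{m2012} is not needed for this direction. It would be the relevant tool for a version with the roles reversed, where the target $M_2$ is complete and separates points.

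The only point needing care is that the hypotheses of Proposition \ref{inj} are met exactly. In particular, $F$ must be globally defined on $M$ with the isometry identity holding everywhere, not just on $U$. This follows from the identity theorem: $F^*\omega_D-\lambda\omega_M$ is real analytic on the connected manifold $M$ and vanishes on the open set $U$, so it vanishes everywhere. If Remark 3.4 of \cite{HL26} already states the identity globally, this check is immediate.
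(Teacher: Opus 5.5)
Your proposal is correct and matches the paper's argument: Proposition \ref{hl26} supplies the global extension $F$ and point separation of $\A(M)$, and Proposition \ref{inj} then gives injectivity. You are also right that Proposition \ref{m2012} is only needed for the companion Corollary \ref{yy2}, not for this one.
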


\begin{corollary}\label{yy2}
Let $M$ be a Stein manifold of complex dimension $n$ with the well-defined complete Bergman metric $\omega_M$ and the Bergman space $\A(M)$ separates points on $M$.
Let $D \subset \CC^n$ be a bounded  domain. Suppose there exists a connected open set $U$ in $D$ and a holomorphic map $f: U \to M$ such that $f^*\omega_M = \lambda \omega_D$ on $U$ for some $\lambda>0$.
Then $f$ extends to an injective holomorphic Bergman isometry $F : D \to M$.
\end{corollary}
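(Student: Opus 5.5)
The statement to prove is Corollary \ref{yy2}. The plan is to obtain it in two steps, directly from Proposition \ref{m2012} and Proposition \ref{inj}.

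\textbf{Step 1: extension.} Take \(M_1=D\) and \(M_2=M\) in Proposition \ref{m2012}. The hypotheses of that proposition need to be checked.
\begin{itemize}
\item \(D\) is a bounded domain, so its Bergman metric is well defined. The Preliminaries section shows this: constants and affine functions lie in \(A^2(D)\), so \(\cB_D\) is an immersion.
\item \(M\) has a well-defined Bergman metric by assumption.
\item \(A^2(M)\) separates points of \(M\), and \((M,\omega_M)\) is complete, both by assumption.
\item \(U\subset D\) is connected and \(f^*\omega_M=\lambda\omega_D\) on \(U\).
\end{itemize}
Proposition \ref{m2012} then gives a holomorphic Bergman isometry \(F:D\to M\) extending \(f\), with \(F^*\omega_M=\lambda\omega_D\) on all of \(D\). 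Since \(D\) is connected, this identity on \(D\) follows from the identity on \(U\) by real-analyticity, if the proposition does not already state it.

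\textbf{Step 2: injectivity.} Apply Proposition \ref{inj} to \(F:D\to M\). The only hypothesis to check is that \(A^2(D)\) separates points of \(D\). This is immediate for a bounded domain. Given \(p\neq q\) in \(D\), pick a complex-linear functional \(L\) with \(L(q-p)\neq0\) and set \(h(w)=L(w-p)\). Then \(h\) is bounded on the bounded set \(D\), so \(h\in A^2(D)\), and \(h(p)=0\) while \(h(q)\neq0\). Proposition \ref{inj} therefore shows that \(F\) is injective.

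Neither step is a real obstacle, because the substantial work sits inside Proposition \ref{m2012}. The one point to watch is that Proposition \ref{m2012} is stated for general complex manifolds. We use it with a domain as the source and a Stein manifold as the target, and we rely on the paper's claim that Mok's argument carries over verbatim to this setting. Unlike Corollary \ref{yy}, no pseudoconvexity of \(D\) is needed, since Proposition \ref{hl26} is not used.
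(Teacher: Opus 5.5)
Your proposal is correct and follows essentially the paper's route. You extend $f$ to a global holomorphic Bergman isometry $F:D\to M$ by Proposition~\ref{m2012}, which applies because $\omega_M$ is complete and $A^2(M)$ separates points, and then you get injectivity of $F$ from Proposition~\ref{inj}, since $A^2(D)$ separates points of the bounded domain $D$; as you note, Proposition~\ref{hl26} is needed only for the companion Corollary~\ref{yy}.
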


\section{The extension of Q.-K. Lu's theorem}
\subsection{On the isometric constant}

\begin{lemma}\label{lem:vanishing}
Let \(G\subset\C^n\) be a bounded domain and \(h\in\OO(G)\). If there exist positive constants \(a, C\) 
such that
\(  |h(z)|^2K_G(z)^a\leq C\) for 
all \(z\in G \), 
 then \(h\equiv0\).
\end{lemma}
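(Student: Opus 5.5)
The hypothesis says that $h$ decays at least like $K_G^{-a/2}$. Since $K_G$ is bounded below by a positive constant on a bounded domain, $h$ is then bounded. The plan is to exploit a lower bound on the Bergman kernel near the boundary, and to do this only at well-chosen boundary points.

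The approach is to show that $h$ extends continuously by zero to at least one open piece of the boundary, and then to conclude with a uniqueness argument. Alternatively, if no such piece is available, we show that $|h|^2$ times a suitable power of $K_G$ is integrable in a way that contradicts the mean value property.

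First I would use a standard lower bound, valid on any bounded domain $G$. Fix $z\in G$ and let $\delta(z)=\operatorname{dist}(z,\partial G)$. Then $G$ is contained in a ball $B(z,R)$ with $R=\operatorname{diam} G$, so monotonicity gives $K_G(z)\ge K_{B(z,R)}(z)=c_nR^{-2n}$. This constant bound is not enough; I need growth of $K_G$ near at least some boundary points. For that, choose a point $p\in\partial G$ that is farthest from a fixed origin $z_0$, that is, a point of $\partial G$ touching the circumscribed sphere $\partial B(z_0,R_0)$ with $G\subset B(z_0,R_0)$. The point $p$ is a strongly convex boundary point of the outer ball. Take $z\in G$ approaching $p$ along the segment from $z_0$. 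Monotonicity $K_G(z)\ge K_{B(z_0,R_0)}(z)\approx c\,(R_0-|z-z_0|)^{-(n+1)}$ then gives $K_G(z)\to\infty$ along that segment. The same holds at the nearby points of $\partial G\cap\partial B(z_0,R_0)$, but this set may be tiny, even a single point. So pointwise blow-up at one point is insufficient by itself.

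The fix is to vary the center. Every point $p$ of the boundary of the convex hull of $\overline G$ that lies in $\partial G$ admits such a circumscribing ball, provided the ball is chosen large and tangent. More robustly, for every unit vector $\nu$, the farthest point of $\overline G$ in direction $\nu$, with a huge ball of radius $\rho$ tangent there, gives blow-up at rate $\mathrm{dist}^{-(n+1)}$ measured to that ball. So along an approach to such support points, $|h|^2\le C K_G^{-a}\le C' \,d^{\,a(n+1)}$, where $d$ is the distance to the supporting ball. Then I would argue by the maximum principle on slices. Take a complex line $L$ through an interior point meeting $G$, and consider the component of $L\cap G$. The function $\log|h|$ restricted to it is subharmonic and tends to $-\infty$ at the support points. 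If the support points in $L$'s boundary had positive harmonic measure, $h|_L\equiv 0$ would follow by the two-constants theorem, and then by varying $L$ we would get $h\equiv0$.

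The main obstacle is precisely ensuring positive harmonic measure. The set of exposed points may meet a given slice only in a polar set, so the slicing argument above may fail outright. To avoid this I would instead try an $L^2$ argument, using $h^k$. Since $|h^k|^2\le C^kK_G^{-ak}$, the functions $h^k$ are in $A^2(G)$ as well. Expanding and using the reproducing property $|h^k(w)|^2\le \|h^k\|^2K_G(w)$ together with $K_G\ge c>0$, the goal is to show $\sup|h|$ is attained in the interior, so that $h$ is constant and then zero. Concretely, set $M=\sup_G|h|$. The decay hypothesis forces $|h(z)|\to0$ wherever $K_G(z)\to\infty$. So if $M>0$, the points where $|h|>M/2$ form a region on which $K_G$ stays bounded, and the goal is to show this region is relatively compact, giving an interior maximum. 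This is the step I expect to be hardest. It needs $K_G\to\infty$ along every sequence tending to the boundary of that region, which is false in general (e.g.\ near removable pluripolar pieces). I would handle this by first replacing $G$ with its $L^2$-envelope or its interior-of-closure, over which $h$ extends by the Riemann extension theorem for bounded functions. Then near the remaining boundary I would apply the exposed-point blow-up iteratively, peeling off successive convex hulls, to conclude.
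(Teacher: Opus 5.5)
There is a genuine gap, and it is at the step you yourself flag as hardest; the proposed repair does not close it. Your final strategy needs the superlevel set $\{|h|>M/2\}$ to be relatively compact in $G$. That amounts to $K_G\to\infty$ along every sequence in that set approaching $\partial G$. This is Bergman exhaustiveness, an extra property that an arbitrary bounded domain need not have, and neither of your replacements supplies it.

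First, the Riemann extension theorem carries a bounded $h$ only across thin sets, not to the interior of $\overline G$ in general (think of a slit disc). More decisively, take $G=\Delta\setminus\bigl(\{0\}\cup\bigcup_{k\geq2}\overline{\Delta}(2^{-k},r_k)\bigr)$ with $r_k=e^{-2^{4k}}$. The interior of $\overline G$ is $G$ itself. The $L^2$-envelope adds no point over $0$, because the functions $1/(z-2^{-k})\in\A(G)$ obstruct any sheet over a neighbourhood of $0$. Yet estimating the Laurent coefficients of $f\in\A(G)$ on the annuli around the small discs shows that $K_G(-t)$ stays bounded as $t\to0^+$. Finally, ``peeling off convex hulls'' only ever detects points of $\partial G$ touched by circumscribing spheres (for a polytope, just its vertices). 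So, like the slicing argument you already abandon, it cannot control a neighbourhood of the boundary.

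The missing idea is that no boundary behaviour of $K_G$ is needed at all. The inequality $|h^k(w)|^2\leq\|h^k\|^2K_G(w)$, which you already wrote down, should be fed into the hypothesis globally rather than used to locate a maximum. This is what the paper does. Suppose $h$ is nonconstant and set $M=\sup_G|h|$, which is finite and positive since $K_G\geq\Vol(G)^{-1}$. Combining the two inequalities gives $|h(z)|^{2ak+2}\leq C\|h^k\|^{2a}$ for all $z$. Taking the supremum over $z$ yields
\[
\int_G\Bigl(\frac{|h|}{M}\Bigr)^{2k}dV=\frac{\|h^k\|^2_{L^2(G)}}{M^{2k}}\geq\Bigl(\frac{M^2}{C}\Bigr)^{1/a}>0\quad\text{for all } k .
\]
But the maximum principle gives $|h|<M$ pointwise on $G$. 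Dominated convergence on the finite-measure set $G$ then forces the left side to $0$, a contradiction. So the non-attainment of $\sup|h|$ is exactly what you should exploit, not something to rule out.

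Your circumscribed-ball argument is still useful for the remaining case $h\equiv c\neq0$. Comparing with $K_{B(z_0,R_0)}$ as $z\to p$, where $p\in\partial G$ is farthest from $z_0$, shows that $K_G$ is unbounded on every bounded domain. Hence $|c|^2K_G^a\leq C$ is impossible. This is precisely the justification the paper leaves implicit.
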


\begin{proof}
It follows from the definition of the 
Bergman kernel that  \(  K_G(z)\geq \frac{1}{\Vol(G)}.\) 
Thus \(h\) is bounded. Let \(M:=\sup_G|h|\). We argue by contradiction.

Suppose first that \(h\) is nonconstant and nonzero. Then \(M>0\), and
the maximum principle yields \(|h(z)|<M\) for every \(z\in G\). For each
positive integer \(k\), \(h^k \in \A(G)\),
and it follows from the extremal characterization of the Bergman kernel that
\(  K_G(z)\geq \frac{|h(z)|^{2k}}{\|h^k\|_{L^2(G)}^2}. \)
Consequently,
\[C\geq |h(z)|^2K_G(z)^a\geq  \frac{|h(z)|^{2ak+2}}{\|h^k\|_{L^2(G)}^{2a}}.\]
Taking the supremum over \(z\), we have 
\[ \frac{\|h^k\|_{L^2(G)}^2}{M^{2k}}  \geq \frac{M^{2/a}}{C^{1/a}}>0.\]
On the other hand,
\[  \frac{\|h^k\|_{L^2(G)}^2}{M^{2k}} = \int_G\left(\frac{|h(z)|}{M}\right)^{2k}dV(z) \rightarrow0 \]
by the dominated convergence theorem. This is a contradiction.

Now suppose that \(h\) is a nonzero constant.
Then \(K_G\) is uniformly bounded on $G$ by the assumption. 
This is also impossible since $G$ is a bounded domain.
\end{proof}

\begin{proposition}\label{constant}
Let \(D_1, D_2\subset\C^n\) be bounded domains, and let
\( f: D_1 \rightarrow D_2 \)
be a holomorphic Bergman isometry satisfying
\[  f^*\omega_{D_2}=\lambda\omega_{D_1}\]
for some constant \(\lambda>0\). Then \(\lambda\leq1\). 
\end{proposition}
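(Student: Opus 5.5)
We need to prove that $\lambda\le 1$, and Lemma~\ref{lem:vanishing} is clearly designed for this. The plan is to use the equal-dimensional Jacobian to build a holomorphic function $h$ with $|h|^2K_{D_1}^a$ bounded, and then apply the lemma to force $h\equiv 0$ unless $\lambda\le 1$.

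\textbf{Step 1: a pluriharmonic function.} Let $J_f=\det f'$, which is holomorphic and not identically zero on $D_1$. Since $f$ is an isometry up to scale, it is an immersion, so $J_f$ is in fact nowhere zero. Put
\[
\psi:=\log K_{D_2}(f(z))+\log|J_f(z)|^2-\lambda\log K_{D_1}(z).
\]
Then $\ddc\psi=f^*\omega_{D_2}-\lambda\omega_{D_1}=0$, because $\log|J_f|^2$ is pluriharmonic. So $\psi$ is pluriharmonic on $D_1$.

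\textbf{Step 2: the key estimate.} Since $D_1$ need not be simply connected, I will not try to write $\psi=\log|g|^2$ globally. I instead use the transformation behaviour of the Bergman kernel. The pullback $f^*$ of $(n,0)$-forms restricted to $\A(D_2)$, namely $\phi\mapsto(\phi\circ f)J_f$, need not land in $\A(D_1)$ unless $f$ has bounded multiplicity. By Proposition~\ref{inj}, applied with $\A(D_1)$ separating points (it contains the coordinate functions), $f$ is injective. Hence for $\phi\in\A(D_2)$,
\[
\|(\phi\circ f)J_f\|_{L^2(D_1)}=\|\phi\|_{L^2(f(D_1))}\le\|\phi\|_{L^2(D_2)}.
\]
The extremal characterization of the Bergman kernel then gives
\[
K_{D_1}(z)\ge |J_f(z)|^2K_{D_2}(f(z)).
\]
Therefore $e^{\psi}\le K_{D_1}^{1-\lambda}$ on $D_1$.

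\textbf{Step 3: contradiction if $\lambda>1$.} Suppose $\lambda>1$ and set $a=\lambda-1>0$. From Step~2, $e^{\psi}K_{D_1}^{a}\le 1$. Locally $\psi=\log|g|^2$ with $g$ holomorphic and nonvanishing, but globally there may be monodromy. To get a single-valued function, I take $h:=J_f\cdot\big(\text{a suitable holomorphic function}\big)$; the simplest choice is to make the estimate directly in terms of $J_f$. Using $K_{D_2}(f(z))\ge 1/\Vol(D_2)$, we get
\[
|J_f|^2\le \Vol(D_2)\,e^{\psi}K_{D_1}^{\lambda}.
\]
This alone is not the form needed. Instead I combine Step~2 with the definition of $\psi$: on $D_1$,
\[
|J_f|^2K_{D_2}(f)=e^{\psi}K_{D_1}^{\lambda}\le K_{D_1}.
\]
Lemma~\ref{lem:vanishing} needs an inequality with the kernel power on the smaller side, so the better route is to work on the universal cover. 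There $\psi=\log|g|^2$ for a nonvanishing holomorphic $g$, and $|g|^2K_{D_1}^{a}\le 1$. Lemma~\ref{lem:vanishing} is stated for domains, however, so I would rather avoid covers. The expected main obstacle is exactly this single-valuedness.

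\textbf{Resolving the obstacle.} I will try a weighted version of the extremal argument. For $k\ge1$, consider the holomorphic functions $h_k$ defined locally as $g^{k}$. Alternatively, observe that $e^{\psi}K_{D_1}^{a}\le1$ together with $\log K_{D_1}$ being plurisubharmonic and exhaustive near pseudoconvex boundary points suggests applying the maximum principle to the plurisubharmonic function $\psi+a\log K_{D_1}$, whose $\ddc$ is $a\,\omega_{D_1}>0$. The cleanest route is to mimic the proof of Lemma~\ref{lem:vanishing} directly with the multivalued $g$: since $|g|$ is single-valued and bounded by $K_{D_1}^{-a/2}\le \Vol(D_1)^{a/2}$, the powers $|g|^{2k}$ are single-valued. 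Local reproducing estimates, or sub-mean-value arguments, then give
\[
|g(z)|^{2k}\le C_z\int_{D_1}|g|^{2k}\,dV,
\]
and with $M=\sup|g|$ the same dominated convergence contradiction follows, since $|g|<M$ pointwise by the strong maximum principle for the plurisubharmonic function $\log|g|$. If the mean-value step fails, the fallback is to pass to the universal cover and prove the analogue of Lemma~\ref{lem:vanishing} there, with the cover's Bergman kernel dominated by that of $D_1$.
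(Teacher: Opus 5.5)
Your Steps 1 and 2 are correct: $\psi$ is pluriharmonic, and $K_{D_1}\ge |J_f|^2\,K_{D_2}\circ f$ is the monotonicity $K_{D_2}\le K_\Omega$ on $\Omega=f(D_1)$ transported by the transformation law. The paper uses the same inequality.

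The gap is Step 3. You never produce a single-valued holomorphic function to feed into Lemma~\ref{lem:vanishing}, and the substitutes you propose fail.
\begin{itemize}
\item What drives that lemma is $h^k\in\A(G)$. This gives $|h(z)|^{2k}\le K_G(z)\|h^k\|^2$ with the constant $K_G(z)$, which is exactly what the hypothesis controls: along a sequence with $|h|\to M$ one has $K_G\le (C/|h|^2)^{1/a}$ bounded. Hence $\|h^k\|^2\gtrsim M^{2k}$ uniformly in $k$.
\item For your multivalued $g$, $g^k\notin\A(D_1)$. The sub-mean-value constant $C_z\sim\operatorname{dist}(z,\partial D_1)^{-2n}$ is not bounded by any function of $K_{D_1}(z)$; for instance, the kernel stays bounded near a removable puncture.
\item Since $|g|<M$ in $D_1$, $|g|$ approaches $M$ only as $z\to\partial D_1$, where $C_z\to\infty$. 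So $\int_{D_1}|g/M|^{2k}\,dV\ge |g(z)/M|^{2k}/C_z$ gives no $k$-uniform lower bound, and no contradiction follows.
\item The universal-cover fallback also fails. Since $|g|^2=e^{\psi}$ already lives on $D_1$, each sheet of an infinite cover contributes the same $\int_{D_1}e^{k\psi}\,dV$, so $g^k$ is not $L^2$ there. The kernel comparison you invoke is also unproved.
\end{itemize}

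The missing idea is to rewrite $\psi$ through the single-valued off-diagonal kernels $A(z)=K_{D_2}(f(z),f(a))$ and $B(z)=K_{D_1}(z,a)$, for a fixed $a\in D_1$.
\begin{itemize}
\item Calabi's diastasis, exactly as in the proof of Proposition~\ref{inj}, gives $e^{\psi}=c\,|J_fA|^2/|B|^{2\lambda}$ off the zero sets. The underlying identity extends by continuity, so the zero sets of $A$ and $B$ coincide.
\item Smoothness of $\psi$ then forces $\ord_Y(J_fA)=\lambda\,\ord_Y(B)$ along each component $Y$ of this zero set.
\item Pick integers $p<q$ with $\varepsilon=p-q/\lambda\in[0,1)$. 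Take $\varepsilon=0$ if $\lambda$ is rational; if $\lambda$ is irrational, integrality of orders forces the zero set to be empty.
\item Then $h=(J_fA)^p/B^q$ is holomorphic on $D_1$ with $h(a)\ne0$.
\item Using Cauchy--Schwarz, $|A|^2\le K_{D_2}(f(z))K_{D_2}(f(a))$, together with your Step 2, you get
\[
|h|^2=c'\,|J_fA|^{2\varepsilon}e^{q\psi/\lambda}\le C\,K_{D_1}^{\varepsilon}\,K_{D_1}^{\frac{q}{\lambda}(1-\lambda)}=C\,K_{D_1}^{p-q}.
\]
\item Lemma~\ref{lem:vanishing} then forces $h\equiv0$, a contradiction.
\end{itemize}
This is the paper's proof, which works on $\Omega$ with $A_a=K_{D_2}(\cdot,a)$, $B_a=K_\Omega(\cdot,a)$ and $F_a=A_a^p/B_a^q$.
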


\begin{proof}
Let \( \Omega=f(D_1).\)
By Proposition \ref{inj}, \(f:D_1\to\Omega\) is biholomorphic. Moreover, 
 $ \omega_{D_2}|_\Omega=\lambda\omega_\Omega.$
 By the similar argument as in the proof of   Proposition \ref{inj}, 
\begin{equation}\label{eq:normalized}
  \frac{|K_{\Omega}(z,w)|^2}{K_{\Omega}(z,z)K_{\Omega}(w,w)}  = \left(  \frac{|K_{D_2}(z, w)|^2}{K_{D_2}(z, z)K_{D_2}(w,w)} \right)^{\frac1\lambda} 
  \end{equation}
holds on 
$  \bigl\{(z, w)\in  \Omega\times\Omega:  K_{\Omega}(z, w)K_{D_2}(z, w)\ne0\bigr\}.$
It then follows from the  continuity that \eqref{eq:normalized} holds at any
\((z,w)\in\Omega\times\Omega\). In particular, for every fixed
\(\xi\in\Omega\), the zero sets of
\(  A_\xi(z):=K_{D_2}(z,\xi)\)
and
 \( B_\xi(z):=K_{\Omega}(z,\xi) \)
have the same support.
Let \(Y\) be an irreducible hypersurface component of this common
support. At a generic smooth point of \(Y\), restricting
\eqref{eq:normalized} to a small complex disk transverse to \(Y\),
then
\begin{equation} \label{eq:orders}
 \ord_Y(B_\xi)=\frac{1}{\lambda}  \,\ord_Y(A_\xi) 
  \end{equation}
holds by comparing the vanishing orders. 

We now prove that \(\lambda\leq1\). Suppose, to the contrary, that
\(\lambda>1\).
Choose positive integers \(p<q\)
such that
\( \varepsilon:=p-\frac{q}{\lambda} \in[0,1). \)
If \(\frac{1}{\lambda}=\frac{r}{s}\) is rational in the simpliest form, take \(p=r\) and \(q=s\),
so that \(\varepsilon=0\). If \(\lambda\) is irrational, choose \(q\) so
large that \(\lceil \frac{q}{\lambda}\rceil<q\), and put \(p=\lceil \frac{q}{\lambda}\rceil\).
Fix \(a\in\Omega\). The quotient
$  F_a(z):=\frac{A_a(z)^p}{B_a(z)^q}$
is a priori meromorphic. For every irreducible hypersurface
component \(Y\) of the common zero support, \eqref{eq:orders} yields
\[  \ord_Y(F_a)=  p\,\ord_Y(A_a)-q\,\ord_Y(B_a)=  \varepsilon\,\ord_Y(A_a)\geq0.\]
Thus \(F_a\) has no pole divisor 
and then \(F_a\) extends holomorphically to $\Omega$ across the
zero set of \(B_a\). Notice that when \(\lambda\) is irrational,
\eqref{eq:orders} forces the common zero support to be empty, whereas
in the rational case our choice yields \(\varepsilon=0\).
On the complement of the common zero set, taking the \(q\)-th power of
\eqref{eq:normalized} with \(w=a\) yields
\begin{equation} \label{eq:F}
  |F_a(z)|^2 =  C_a\,|A_a(z)|^{2\varepsilon}  \frac{K_{D_2}(z, z)^{\frac{q}{\lambda}}}{K_{\Omega}(z,z)^q},
   \quad
  C_a:=\frac{K_{D_2}(a, a)^{\frac{q}{\lambda}}}{K_\Omega(a, a)^q}>0.
\end{equation}
This equality extends everywhere by continuity. When
\(\varepsilon=0\), the factor
\(|A_a(z)|^{2\varepsilon}\) is  identically \(1\),
including at zeros of \(A_a\).
The 
Cauchy--Schwarz inequality yields
\[ |A_a(z)|^2\leq K_{D_2}(z, z)K_{D_2}(a, a).\]
Moreover, since \(\Omega\subset D_2\), 
 the monotonicity inequality of the Bergman kernel yields
\[  K_{D_2}(z, z)\leq K_{\Omega}(z,z)\]
for all \(z\in\Omega\).
Using \(\frac{q}{\lambda}+\varepsilon=p\) in \eqref{eq:F}, and absorbing the fixed
factor \(K_{D_2}(a, a)^\varepsilon\) into the constant, we obtain
\[  |F_a(z)|^2  \leq C'_a\frac{K_{D_2}(z, z)^p}{K_{\Omega}(z,z)^q}  \leq C'_a K_{\Omega}(z,z)^{p-q}.\]
Equivalently,
\[ |F_a(z)|^2K_{\Omega}(z,z)^{q-p}\leq C'_a.\]
Since \(q-p>0\), Lemma~\ref{lem:vanishing}, applied on \(\Omega\),
implies \(F_a\equiv0\). This is impossible because
$$  F_a(a)=\frac{K_{D_2}(a, a)^p}{K_{\Omega}(a, a)^q}\ne0.$$
Therefore \(\lambda\leq1\).
\end{proof}

\subsection{Stein case}
We first recall the definition of the pluripolar set on a complex manifold.
\begin{definition}
Let $M$ be a connected complex manifold and let $E\subset M$.
The set $E$ is called locally pluripolar in $M$ if, for every
$p\in E$, there exist an open neighborhood $U_p\subset M$ of $p$ and
a function $    u_p\in\operatorname{PSH}(U_p)$ with $ u_p\not\equiv -\infty$,
such that $    E\cap U_p\subset\{u_p=-\infty\}.$
The set $E$ is called (globally) pluripolar in $M$ if there
exists a function $    u\in\operatorname{PSH}(M)$ with $ u\not\equiv-\infty,$
such that $    E\subset\{u=-\infty\}.$
\end{definition}

Note that every globally pluripolar set is locally pluripolar. The converse need not hold on an arbitrary complex manifold. It does hold when \(M\) is Stein, by the Stein version of Josefson’s theorem (cf. Remark 1 in \cite{B80} and
the discussion preceding Theorem 6.8 in \cite{BT82}).  

There are two key ingredients in the proof of the main theorem: Calabi's extension theorem (cf. \cite{C53, HL25b}) and Irgens' continuation theorem for $L^2$-holomorphic functions (cf. \cite{I04, PZ02}). We will apply them to prove the following characterization for the complement. For the detailed account of Riemann domain and the pluripotential theory, the reader is referred to the classical textbooks (cf. \cite{GF76, GR65, K91}).

\begin{proposition}\label{lem:continuation}
Let \(n\geq1\), and let $  \Omega\subset G\subset\C^n,$ 
where \(\Omega\) is a bounded pseudoconvex domain and \(G\) is a
connected domain.  Fix \(z_0\in\Omega\).  Assume that the germ at
\(z_0\) of the
Bergman-Bochner map $\cB$ of \(\Omega\) extends holomorphically
 along every continuous curve in \(G\) starting at
\(z_0\).
Then \(G\setminus\Omega\) is pluripolar.
\end{proposition}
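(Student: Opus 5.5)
The plan is to convert the analytic continuation of the Bergman--Bochner map into an $L^2$-holomorphic continuation statement, and then invoke Irgens' theorem on the $L^2$-envelope of holomorphy.

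\emph{Step 1: lift the continuation to the $\ell^2$-level.} Fix an orthonormal basis $(\phi_j)$ of $\A(\Omega)$, normalized near $z_0$ so that $\cB=[\phi_0:\phi_1:\cdots]$. Since $\Omega$ is bounded, $1\in\A(\Omega)$, so $\phi_0$ may be taken constant. In the affine chart $\{\xi_0\neq0\}$ the map $\cB$ is then $z\mapsto(\phi_j/\phi_0)_{j\ge1}$, and hence each $\phi_j$ itself continues along curves.

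Continuation of $\cB$ along every curve of $G$ only gives continuation into $\PP^\infty$, and the continued map may leave the chart $\{\xi_0\ne0\}$. To handle this I would work with ratios $\phi_j/\phi_k$ and an $\ell^2$-valued lift, which is well defined up to a scalar. Because $n$-dimensional coordinates are available, the linear functions $z_i$ lie in $\A(\Omega)$ and are linear combinations of the $\phi_j$. Hence the ratios $z_i\cdot(\text{const})^{-1}$ continue as the genuine coordinates. Along any curve, the projective map composed with the linear projection onto $\mathrm{span}\{1,z_1,\dots,z_n\}$ recovers $[1:z_1:\cdots:z_n]$. So on the whole continuation the constant-function coordinate never vanishes, the map stays in $\{\xi_0\neq0\}$, and each $\phi_j$ continues holomorphically along every curve in $G$.

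\emph{Step 2: build a Riemann domain and a continuation of $\A(\Omega)$.} Let $\pi:\wt G\to G$ be the Riemann domain (a connected covering of $G$, possibly with monodromy) on which the germ of $(\phi_j)_j$ becomes single-valued. It contains a copy of $\Omega$. Every $h\in\A(\Omega)$ extends to $\wt G$, since $h=\sum c_j\phi_j$ and the series converges locally uniformly after continuation. I expect to control this via the kernel identity $\sum|\phi_j|^2=K$ propagated by the identity theorem for real-analytic functions along paths, which keeps $\sum_j|\tilde\phi_j|^2$ locally bounded. Thus $\wt G$ is an $L^2$-holomorphic extension of $\Omega$ over $\C^n$.

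\emph{Step 3: apply Irgens' theorem.} Irgens' theorem~\cite{I04} says that for a bounded pseudoconvex $\Omega$, the $L^2$-envelope of holomorphy is $\Omega$ together with a pluripolar set. More precisely, any Riemann-domain extension to which all of $\A(\Omega)$ extends satisfies: $\pi(\wt G)\setminus\Omega$ is locally pluripolar and $\pi$ is injective. Injectivity removes the monodromy, and $\pi(\wt G)=G$. Josefson's theorem~\cite{J78} then upgrades $G\setminus\Omega$ from locally pluripolar to globally pluripolar in $\C^n$, hence in $G$.

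The main obstacle is Step 2. It requires that the continued functions really make up the whole Bergman space and satisfy local $L^2$/boundedness on $\wt G$, which is what Irgens' theorem needs. It also requires ruling out the continued Bergman--Bochner map hitting the hyperplane $\xi_0=0$. My first attempt at both would use the trick with the coordinate functions $1,z_1,\dots,z_n\in\A(\Omega)$.
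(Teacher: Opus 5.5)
Your architecture matches the paper's: a connected covering on which the continuation is single-valued, extension of $\A(\Omega)$ via the $\ell^2$-lift, comparison with Irgens' $L^2$-envelope, then Josefson. However, Step~1 is circular, and Step~2 depends on it.

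Write $\pi=(\pi_1,\dots,\pi_n)$ for the covering $\pi:\wt G\to G$ and $\wt\cB$ for the continued map. Let $\Lambda_0(\xi)=\xi_0$ and $\Lambda_i(\xi)=\sum_j a_{ij}\xi_j$, where $z_i=\sum_j a_{ij}\phi_j$ on $\Omega$. For any local lift $F$ of $\wt\cB$, the relation $\phi_0\,\Lambda_iF=\pi_i\,\Lambda_0F$ holds on the distinguished sheet. It is homogeneous in $F$, so it does not depend on the lift, and by the identity theorem it holds on all of $\wt G$. Hence $\Lambda_0F=0$ forces $\Lambda_1F=\cdots=\Lambda_nF=0$. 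In other words, $Z:=\wt\cB^{-1}\{\xi_0=0\}$ is exactly the indeterminacy locus of the projection $\PP^\infty\dashrightarrow\PP^n$. Your identity with $[1:z_1:\cdots:z_n]$ therefore holds only off $Z$ and says nothing about whether $Z$ is empty.

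Step~2 cannot compensate. The continued functions $\wt\phi_j=\phi_0F_j/F_0$ satisfy $\sum_j|\wt\phi_j|^2=|\phi_0|^2\|F\|^2/|F_0|^2$, which blows up along $Z$. By Banach--Steinhaus, some $h\in\A(\Omega)$ then fails to extend across $Z$ whenever $Z\neq\emptyset$. So ``all of $\A(\Omega)$ extends to $\wt G$'' is equivalent to the unproved claim $Z=\emptyset$.

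The missing idea is to excise $Z$ rather than rule it out, which is what the paper does.
\begin{itemize}
\item $Z$ is analytic and misses the distinguished sheet, so it is a proper analytic subset of the connected manifold $\wt G$, and $\wt G\setminus Z$ is connected.
\item On $\wt G\setminus Z$, $\wt h=\phi_0\sum_j c_jF_j/F_0$ extends every $h=\sum_j c_j\phi_j$. The local boundedness you wanted comes from the $\ell^2$-valued lift, not from propagating the kernel identity.
\item Step~3 should go through the universal property of the $L^2$-envelope $(\wh\Omega_2,\wh p,\iota)$. This gives a morphism $\Phi:\wt G\setminus Z\to\wh\Omega_2$ with $\wh p\circ\Phi=\pi$, hence a local biholomorphism.
\item Then $\pi^{-1}(G\setminus\Omega)\setminus Z\subset\Phi^{-1}\bigl(\wh\Omega_2\setminus\iota(\Omega)\bigr)$, which is locally pluripolar by Irgens' theorem.
\item Push this set and the analytic set $Z$ down by $\pi$ on countably many sheets where $\pi$ is biholomorphic, and apply Josefson's theorem. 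This yields that $G\setminus\Omega$ is pluripolar.
\end{itemize}
Your reformulation of Irgens' theorem, namely injectivity of $\pi$ for every $\A(\Omega)$-extension, is not needed. In any case it would have to be derived from this morphism rather than quoted.
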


\begin{proof}
Choose an orthonormal basis \((\phi_j)_{j\geq0}\) of \(\A(\Omega)\)
with $  \phi_0\equiv \Vol(\Omega)^{-\frac{1}{2}}.$
The Bergman-Bochner map of $\Omega$ is
\[  \cB=[\phi_0:\phi_1:\cdots]: \Omega \to \PP^\infty.\]
 Write $ b_{0}:=\operatorname{germ}_{z_{0}} \cB$
to be the original germ of $\cB$ at $z_0$. 
Along every continuous curve in \(G\) connecting
\(z_0\) and $z$, by holomorphic continuation, $\cB$ extends as a germ of holomorphic map at $z$ from $G$ to $\PP^\infty$. When two curves are homotopic to each other, by the classical monodromy argument, the germs of $\cB$ at $z$ coincide. Moreover, let $\alpha$ be a loop in $G$ based at $z_0$ and denote the holomorphic continuation of $b_0$ along $\alpha$ by  $\Cont_{\alpha}(b_{0}).$
Let 
\begin{equation}\label{eq:stabilizer}
  H=\Stab(b_{0})  :=\bigl\{[\alpha]\in\pi_{1}(G,z_{0}):  \Cont_{\alpha}(b_{0})=b_{0}\bigr\}.
\end{equation}
Thus $H$ consists of exactly those homotopy classes of loops for which
holomorphic continuation returns to the original branch of $\cB$. It is straightforward to verify that $H$ is a subgroup. 
By the classification of connected coverings, the subgroup
\(H\subset\pi_{1}(G,z_{0})\) determines a connected covering
$ p \colon X\rightarrow G$
with a distinguished point $x_{0}\in p^{-1}(z_{0})$, such that $  p_{*}\pi_{1}(X,x_{0})=H.$
Equivalently, a loop in $G$ based at $z_{0}$ lifts to a loop in $X$
based at $x_{0}$ if and only if its homotopy class belongs to $H$.
A point  $x \in p^{-1} (z)$ of $X$ over $z\in G$ can be represented by a path
\(\gamma\) from  $z_{0}$ to $z$.  The value of the lifted map at $x$ is
defined by continuing $b_{0}$ along \(\gamma\). If two paths represent the
same point of $X$, the loop obtained by following one path and returning
along the other lies in $H$. It follows from \eqref{eq:stabilizer}  that
this loop produces no change of branch.  The continued value is therefore
independent of the representative, and holomorphic continuation becomes a
single-valued holomorphic map $  \widetilde \cB\colon X\rightarrow\PP^{\infty}.$
In this sense, $X$ separates precisely the branches created by monodromy and
it is the natural connected covering with the smallest number of sheets on
which the chosen original branch becomes single-valued. Since
$\c B$ is already a single-valued holomorphic map on \(\Omega\), continuation around every loop lying in \(\Omega\) returns to the
original germ. If $i\colon\Omega\hookrightarrow G$ denotes the inclusion,
then $  i_{*}\bigl(\pi_{1}(\Omega,z_{0})\bigr)\subseteq H.$ 
The  lifting criterion now yields a lift
\[  j\colon\Omega\rightarrow X,
 \quad p\circ j=\operatorname{id}_{\Omega},
  \quad j(z_{0})=x_{0}. \]
Because $p\circ j=\operatorname{id}_{\Omega}$, the map $j$ is injective.
Since $p$ is locally biholomorphic, $j$ is a holomorphic open embedding.
Thus $j(\Omega)$ is the distinguished original sheet of $X$, and
$ \widetilde B\circ j=B.$

Since  \(p:X\to G\) is the connected covering corresponding to
\(H\),
 \(X\) is connected and Hausdorff.
Since \(G\) is a second-countable smooth manifold, it has the
homotopy type of a countable CW complex. Consequently
\(\pi_1(G,z_0)\) is countable. The fiber of the connected covering
corresponding to \(H\) is naturally identified, according to the
chosen convention, with a coset space of \(H\) in
\(\pi_1(G,z_0)\). Hence every fiber, and therefore the set of
sheets, is countable. 
Choose a countable basis \((U_m)_{m\geq1}\) of connected, evenly
covered open subsets of \(G\). For each \(m\), write
\[  p^{-1}(U_m)=\bigsqcup_{k\in I_m}V_{m,k},\]
where \(p|_{V_{m,k}}:V_{m,k}\to U_m\) is a homeomorphism.
The index set \(I_m\) is in bijection with the fiber over any
point of \(U_m\), so it is countable. Therefore $    \{V_{m,k}:m\geq1,\ k\in I_m\}$
is a countable basis for \(X\), and \(X\) is second-countable.
With the unique complex structure 
induced by the local biholomorphism  \(p\), $X$ is a complex manifold and thus $(X, p)$ a Riemann
domain over $\CC^n$.

Let
\[ Z_0=\{[w_0:w_1:\cdots]\in\PP^\infty:w_0=0\} \]
and define $  Z:=\wt{\cB}^{-1}(Z_0).$
Obviously, $Z$  is a proper analytic subvariety of \(X\) and it follows from the definition of $\cB$ that $Z \cap j(\Omega) =\emptyset$.  Moreover, $   X^\circ:=X\setminus Z$
is connected.

Let \(h\in\A(\Omega)\).  There is a unique coefficient sequence
\((c_j)\in\ell^2\) such that $  h=\sum_{j=0}^{\infty}c_j\phi_j  \in \A(\Omega)$.
Let \(V\subset X\) be an open subset on which
\(\wt{\cB}\) has a holomorphic \(\ell^2\)-valued lift
$$  F=(F_0,F_1,\ldots):V\rightarrow\ell^2\setminus\{0\}.$$
Namely, \(\wt{\cB}=[F_0,F_1,\ldots]: V \to \PP(\ell^2) =\PP^\infty\).
On \(V\cap X^\circ\), 
\begin{equation}\label{eq:h-extension}
  \wt h  :=\phi_0\frac{\sum_{j=0}^{\infty}c_jF_j}{F_0}.
\end{equation}
is a well-defined holomorphic function. Moreover, it is easy to verify that $\wt h$ is  independent of the lift and glues to a
global function $  \wt h\in\cO(X^\circ).$
Moreover,  \(\wt h\circ j=h\).  We have therefore shown
that every function in \(\A(\Omega)\) extends holomorphically to the Riemann domain \(X^\circ\).

Let $  \bigl(\wh{\Omega}_2,\wh p,\iota,  (\wh h)_{h\in\A(\Omega)}\bigr)$ be the \(L_h^2(\Omega)\)-envelope of holomorphy of $\Omega$ (cf. \cite{I04}), where  \(\wh p:\wh{\Omega}_2\to\C^n\) is a Riemann domain,
\(\iota:\Omega\hookrightarrow\wh{\Omega}_2\) is the distinguished
sheet, \(\wh p\circ\iota=\id_\Omega\), and
\(\wh h\circ\iota=h\) for every \(h\in\A(\Omega)\).
It follows from the argument above that $  \bigl(X^\circ,p|_{X^\circ},j,(\wt h)_{h\in\A(\Omega)}\bigr)$
is an  \(\A(\Omega)\)-extension of $\Omega$. 
Therefore there is a unique morphism $  \Phi:X^\circ\rightarrow\wh{\Omega}_2$
satisfying
\[  \Phi\circ j=\iota,  \quad \wh p\circ\Phi=p|_{X^\circ}.\]
It follows that
\(\wt h=\wh h\circ\Phi\) for every \(h\in\A(\Omega)\).
Moreover,
\[d\wh p_{\Phi(x)}\circ d\Phi_x=d p_x.\]
Since both $\wh p$ and $p$ are locally biholomorphic, 
\(d\Phi_x\) is an isomorphism for every \(x\in X^\circ\).  Thus \(\Phi\)
is locally biholomorphic.

Now let $  (\wh\Omega_{\cO},\wh p_{\cO},\iota_{\cO})$
be the ordinary envelope of holomorphy. There is the unique canonical 
morphism
\begin{equation}\label{eq:canonical-envelope-map}
  \kappa:\wh\Omega_{\cO}\rightarrow\wh\Omega_2,
  \quad
  \wh p\circ\kappa=\wh p_{\cO},
  \quad
  \kappa\circ\iota_{\cO}=\iota.
\end{equation}
Since $\Omega$ is pseudoconvex, the following
identification holds: 
\(\wh\Omega_{\cO}=\Omega\),
\(\iota_{\cO}=\id_\Omega\). Also
\eqref{eq:canonical-envelope-map} implies
\(\kappa(\wh\Omega_{\cO})=\iota(\Omega)\).  Therefore it follows from Theorem 9 in \cite{I04}
 that
\[  P:=\wh{\Omega}_2\setminus\iota(\Omega)\]
is pluripolar in \(\wh{\Omega}_2\).  If
\(x\in X^\circ\) and \(p(x)\in G\setminus\Omega\), then
\(\Phi(x)\notin\iota(\Omega)\), because
\(\wh p(\Phi(x))=p(x)\notin\Omega\).  Hence
\begin{equation}\label{eq:exceptional-containment}
  p^{-1}(G\setminus\Omega)\cap X^\circ
  \subset \Phi^{-1}(P).
\end{equation}
Because \(\Phi\) is locally biholomorphic, the inverse image
\(\Phi^{-1}(P)\) is locally pluripolar.

Fix \(q\in G\setminus\Omega\), and choose \(x\in p^{-1}(q)\).
Take an open subset \(V\ni x\) in $X$ such that $  p|_V:V\rightarrow U$
is biholomorphic onto an open set \(U\ni q\).  Let $  A:=\Phi^{-1}(P)\subset X^\circ.$ 
The distinguished sheet \(\iota(\Omega)\) is open, so \(P\) is closed
and hence \(A\) is closed in \(X^\circ\). 
We will show that  \(A\cap V\)  is pluripolar in \(V\), by 
identifying \(V\) with the domain \(U\subset\C^n\) by \(p|_V\) and applying
Josefson's theorem. 
Choose a countable cover
\((W_m)_{m\geq1}\) of \(V\setminus Z\) by open subsets relatively
compact in \(V\setminus Z\). 
Because \(A\) is closed in \(X^\circ\) and
\(\overline{W_m}\Subset V\setminus Z\), every accumulation point of
\(A\cap W_m\) in \(V\) lies in \(A\cap\overline{W_m}\).
Josefson's theorem (cf. \cite{J78}) implies that 
each \(A\cap W_m\) is locally pluripolar in \(\C^n\), hence pluripolar in
\(V\). Since a countable union of pluripolar sets is
pluripolar, $ A\cap V=\bigcup_{m\geq1}(A\cap W_m)$ is pluripolar in
\(V\).  Since \(Z\cap V\) is analytic, it is
pluripolar as well, and so is their union. By \eqref{eq:exceptional-containment},
$ p^{-1}(G\setminus\Omega)\cap V \subset (A\cap V)\cup(Z\cap V),$ and thus $  (G\setminus\Omega)\cap U$
is pluripolar.  Since \(q\) was arbitrary, \(G\setminus\Omega\) is
locally pluripolar in \(\C^n\).
Josefson's theorem \cite{J78} implies that a locally pluripolar
subset of \(\C^n\) is globally pluripolar.  Therefore
\(G\setminus\Omega\) is pluripolar.
\end{proof}

The following lemma is standard and known to experts (cf. \cite{DW22a, HL25a}).

\begin{lemma}\label{lem:kernel-removal}
Let \(G\subset\C^n\) be a bounded domain, and let \(E\subset G\) be relatively
closed and pluripolar.  Assume \(G\setminus E\) is connected.  Then
restriction is a unitary isomorphism from
\(\A(G)\) to \( \A(G\setminus E)\)
and \(  K_G(z,w)=K_{G\setminus E}(z,w)\) for all \( z,w\in G\setminus E.\)
Consequently,
\( \omega_G|_{G\setminus E}=\omega_{G\setminus E}. \)
\end{lemma}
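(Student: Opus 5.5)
The plan is to show that restriction $R:\A(G)\to\A(G\setminus E)$, $h\mapsto h|_{G\setminus E}$, is an isometric bijection; the kernel identity and the equality of metrics then follow formally.

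\emph{Isometry and injectivity.} A relatively closed pluripolar set $E$ has Lebesgue measure zero. Hence $\|h|_{G\setminus E}\|_{L^2(G\setminus E)}=\|h\|_{L^2(G)}$ for every $h\in\A(G)$. In particular $R$ is an isometry and therefore injective. It is well defined because $G\setminus E$ is open, being the complement of a relatively closed set.

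\emph{Surjectivity.} This is the main step. Take $g\in\A(G\setminus E)$. We must extend $g$ holomorphically across $E$; the $L^2$ norm is then automatically preserved, again because $E$ has measure zero. I will use the classical removable singularity theorem for $L^2$ holomorphic functions across closed pluripolar sets (Siciak's theorem; see also \cite{I04, PZ02}). The reduction to it is local. Near a point $q\in E$, choose a small ball $B\ni q$ in which $E\cap B\subset\{u=-\infty\}$ for some plurisubharmonic $u\not\equiv-\infty$. Then $g\in L^2(B\setminus E)\cap\cO(B\setminus E)$, and the theorem gives an extension to $B$.

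Two points need care. First, $B\setminus E$ is connected, since a closed pluripolar set does not disconnect a domain. Second, the local extensions glue, because any two of them agree on the dense open set where they both equal $g$. If one prefers to avoid quoting the full theorem, the first thing I would try is the following. By Josefson's theorem and the $\overline\partial$ machinery, $L^2$ holomorphic functions on $B\setminus E$ are $L^2$ on $B$, and the distribution $\overline\partial g$ on $B$ is supported on $E$. Since pluripolar sets have zero Hausdorff $(2n-2)$-measure only in special cases, I expect the cleanest route is instead Irgens' statement already used in the proof of Proposition~\ref{lem:continuation}: the $L^2$ envelope of $G\setminus E$ contains $G$. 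This is the place where connectivity of $G\setminus E$ is used, namely to get a single-valued extension.

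\emph{Kernel and metric.} Since $R$ is unitary, it maps an orthonormal basis $(\phi_j)$ of $\A(G)$ onto an orthonormal basis $(\phi_j|_{G\setminus E})$ of $\A(G\setminus E)$. The series expansion of the kernel recalled in the Preliminaries then gives $K_G(z,w)=K_{G\setminus E}(z,w)$ for all $z,w\in G\setminus E$. Taking $\ddc\log$ of the diagonal $K(z,z)$ yields $\omega_G|_{G\setminus E}=\omega_{G\setminus E}$.
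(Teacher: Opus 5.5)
Your proposal is correct and is essentially the paper's proof: $E$ has measure zero, so restriction is an isometry; the $L^2$ removability theorem for closed pluripolar sets (Siciak; Irgens' Lemma~1) gives surjectivity; and the orthonormal-basis expansion of the kernel transfers $K_G$ and $\omega_G$. The differences are cosmetic. You localize and glue, whereas the paper applies the removability theorem globally, and your detour through Hausdorff measure and the $L^2$-envelope is unnecessary. Also, connectivity of $G\setminus E$ is not what makes the extension single-valued; your density-gluing argument already does that.
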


\begin{proof}
A pluripolar subset of \(\C^n\) has Lebesgue measure zero.  Let
\(h\in\A(G\setminus E)\).  
The \(L^2\)-removability
theorem for relatively closed pluripolar
sets (cf. \cite[Lemma~1]{I04}, \cite{S82}) yields a
unique holomorphic extension \(H\in\cO(G)\).  Because \(E\) has
measure zero,
\(  \|H\|_{L^2(G)}=\|h\|_{L^2(G\setminus E)}. \)
Thus \(H\in\A(G)\), and restriction is a surjective isometry.  It is
injective by the identity theorem, so it is unitary. It follows from the standard argument that the Bergman kernel functions and the Bergman metrics are equal.
\end{proof}

Theorem \ref{main1} now follows from combining Calabi's extension theorem (cf. \cite{HL25b} by Huang-Li) and Proposition \ref{lem:continuation}.

\begin{proof}[Proof of Theorem~\ref{main1}]
Let \( \Omega=f(M) \subset D.\)
It follows from Proposition 3.1 in \cite{HL26} that the Bergman space $\A(M)$ separates points of $M$. 
By Proposition \ref{inj}, \(f: M \to\Omega\) is biholomorphic. By the assumption of Theorem \ref{main1},
\(\Omega\) is bounded pseudoconvex  and 
 $ \omega_{D}|_\Omega=\lambda\omega_\Omega.$
Define $  \eta:=\lambda^{-1}\omega_{D}$ and then 
\( \eta|_\Omega=\omega_\Omega. \)
By \eqref{eq:BB-pullback}, the Bergman-Bochner map of \(\Omega\) is
a holomorphic isometric map from
\((\Omega,\eta|_\Omega)\) into
\((\PP^\infty,\omega_{\FS})\).  Choose \(z_0\in\Omega\) and a
neighborhood \(U_0\Subset\Omega\) of \(z_0\).  Applying the Calabi
extension theorem (cf. \cite[Theorem~1.1]{HL25b}) to the restriction
of this map to \(U_0\) on the real-analytic K\"ahler manifold
\((D,\eta)\), the germ at \(z_0\) of this projective isometry
continues holomorphically and isometrically along every continuous
curve in \(D\) starting at \(z_0\).
Proposition~\ref{lem:continuation} 
 implies that \( E:=D\setminus\Omega \)
is pluripolar.  Moreover, $E$ is relatively closed in \(D\), since \(\Omega\)
is open.
It follows from Lemma~\ref{lem:kernel-removal} that
  $\omega_{D}|_\Omega=\omega_\Omega.$
Therefore \( \lambda=1 \) and 
\(f: M_1\to D \setminus E\) is biholomorphic.
\end{proof}

\begin{proposition}\label{prop:stein-target-continuation}
Let $D\subset\C^n$ be a bounded pseudoconvex domain, let $M$ be a
complex manifold of complex dimension $n$. Assume 
$    f:D\rightarrow\Omega\subset M$
be a biholomorphism from $D$ onto an open subset $\Omega$ in $M$. Suppose that
 $    \mathcal B_D\circ f^{-1}:\Omega\rightarrow\PP^\infty$
admits holomorphic continuation along every continuous curve in $M$.
Then $M\setminus\Omega$ is locally pluripolar.
\end{proposition}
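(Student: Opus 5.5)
The plan is to mirror the proof of Proposition~\ref{lem:continuation}, now with $M$ in place of $G$ and $\Omega=f(D)\cong D$ in place of the pseudoconvex domain. The conclusion is only local pluripolarity, since Josefson's globalization is unavailable on a general $M$.

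\textbf{Step 1: the monodromy covering.} Fix $z_0\in\Omega$ and let $b_0$ be the germ of $\cB_D\circ f^{-1}$ at $z_0$. Let $H\subset\pi_1(M,z_0)$ be the stabilizer of $b_0$ under continuation. Form the connected covering $p:X\to M$ corresponding to $H$. As before, $X$ is a second-countable connected complex manifold, since $\pi_1(M)$ is countable. The continuation is then a single-valued holomorphic map $\wt\cB:X\to\PP^\infty$. Loops in $\Omega$ lie in $H$ because $\cB_D\circ f^{-1}$ is single-valued on $\Omega$, so the lifting criterion gives an open embedding $j:\Omega\to X$ with $p\circ j=\id$ and $\wt\cB\circ j=\cB_D\circ f^{-1}$.

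\textbf{Step 2: extending the Bergman functions.} Take an orthonormal basis with $\phi_0$ constant, set $Z=\wt\cB^{-1}(\{w_0=0\})$ and $X^\circ=X\setminus Z$. Then $Z$ is a proper analytic subset disjoint from $j(\Omega)$, and $X^\circ$ is connected. Every $h\in\A(D)$ yields $\wt h=\phi_0\sum c_jF_j/F_0\in\cO(X^\circ)$ with $\wt h\circ j\circ f=h$.

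\textbf{Step 3: making $X^\circ$ a Riemann domain over $\CC^n$.} This is the main obstacle: $X$ is spread over $M$, not over $\CC^n$, so Irgens' envelope does not apply directly. The key choice is to use the coordinate functions themselves. The coordinates $z_1,\dots,z_n$ of $D$ lie in $\A(D)$ because $D$ is bounded, and they extend to $\wt z_k\in\cO(X^\circ)$. Set $\pi:=(\wt z_1,\dots,\wt z_n):X^\circ\to\CC^n$, so that $\pi\circ j\circ f=\id_D$.

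$\pi$ is a local biholomorphism only away from the analytic set $W=\{\det d\pi=0\}$. This set is proper, because $\pi$ is biholomorphic on $j(\Omega)$. Since $X^\circ$ is connected, $X^{\circ\circ}:=X^\circ\setminus W$ is connected, and $(X^{\circ\circ},\pi)$ is a Riemann domain over $\CC^n$ containing $j(\Omega)$ as the sheet identified with $D$. It is an $\A(D)$-extension of $D$, via the functions $\wt h$.

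\textbf{Step 4: pluripolarity via Irgens.} Irgens' universal property gives a locally biholomorphic morphism $\Phi:X^{\circ\circ}\to\wh D_2$. Because $D$ is pseudoconvex, Theorem~9 of \cite{I04} makes $P=\wh D_2\setminus\iota(D)$ pluripolar. Hence $\Phi^{-1}(P)$ is locally pluripolar and closed in $X^{\circ\circ}$.

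It remains to check that points of $X^{\circ\circ}$ over $M\setminus\Omega$ land in $\Phi^{-1}(P)$. Suppose $x$ lies over $M\setminus\Omega$ but $\Phi(x)=\iota(y)$ with $y\in D$. Then $\wt h(x)=h(y)$ for all $h$, so $\wt\cB(x)=\cB_D(y)=\wt\cB(j f(y))$. Here $\cB_D$ is injective on $D$ by Proposition~\ref{inj}, with $D$'s Bergman space separating points via the coordinates.

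To rule this out, consider the two points $x$ and $jf(y)$, which share the same $\Phi$-image. Near $\Phi(x)$, the map $\Phi$ is a local biholomorphism onto a neighbourhood of $\iota(y)$ inside $\iota(D)$. So $\Phi^{-1}(\iota(D))$ is open, and on it $(j\circ f\circ\iota^{-1})\circ\Phi$ is defined. This map agrees with the identity on $j(\Omega)$. By connectedness of $X^{\circ\circ}$ and the identity principle, it agrees with the identity on the component of $\Phi^{-1}(\iota(D))$ containing $j(\Omega)$. Handling the other components, where the identity principle does not directly reach, is the delicate point. There I would argue via $p$: the map $p\circ jf\iota^{-1}\circ\Phi$ and $p$ agree near $j(\Omega)$ and hence everywhere, which forces $p(x)=p(jf(y))\in\Omega$, a contradiction. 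Thus $p^{-1}(M\setminus\Omega)\cap X\subset \Phi^{-1}(P)\cup W\cup Z$, a locally pluripolar set.

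\textbf{Step 5: conclusion.} Near any $q\in M\setminus\Omega$, pick $x\in p^{-1}(q)$ and a sheet $V\ni x$ with $p|_V$ biholomorphic onto $U\ni q$. Work in a coordinate chart of $U$ and apply Josefson's theorem as in Proposition~\ref{lem:continuation} to $\Phi^{-1}(P)\cap V$, using countable covers away from $Z\cup W$, and add the analytic sets $Z\cap V$ and $W\cap V$. This shows $(M\setminus\Omega)\cap U$ is pluripolar in $U$. Hence $M\setminus\Omega$ is locally pluripolar.
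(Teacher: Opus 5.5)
Your overall architecture matches the paper's proof: the monodromy covering over $M$, extending $\A(D)$ to $X\setminus Z$, using the extended coordinates to make $X^{\circ\circ}$ a Riemann domain over $\C^n$, and then Irgens' theorem. However, Step 4 has a genuine gap exactly at the point you flag as delicate. The map $p\circ(j\circ f\circ\iota^{-1})\circ\Phi$ is, like $(j\circ f\circ\iota^{-1})\circ\Phi$, defined only on the open set $\Phi^{-1}(\iota(D))$; there it equals $f\circ\pi$, with $\pi$ your coordinate map. So ``they agree near $j(\Omega)$ and hence everywhere'' applies the identity principle on the same possibly disconnected set, and composing with $p$ changes nothing. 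Suppose $x$ lay in a component of $\Phi^{-1}(\iota(D))$ not containing $j(\Omega)$. Then neither of your arguments gives $p(x)\in\Omega$, and nothing else in your proposal rules such a component out.

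The missing idea is that there is no other component. Indeed $\Phi^{-1}(\iota(D))=X^{\circ\circ}\setminus\Phi^{-1}(P)$, and $\Phi^{-1}(P)$ is a closed locally pluripolar subset of the connected manifold $X^{\circ\circ}$. Closed locally pluripolar sets do not disconnect a connected complex manifold. To see this, take a $\{0,1\}$-valued locally constant function on the complement. It is bounded holomorphic, so it extends holomorphically across the set, and the extension is forced to be constant. Hence $X^{\circ\circ}\setminus\Phi^{-1}(P)$ is connected, and the identity principle gives $p=f\circ\pi$ on all of it. Therefore $p\bigl(X^{\circ\circ}\setminus\Phi^{-1}(P)\bigr)\subset\Omega$. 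This is precisely how the paper concludes: its set $X^1\setminus R$ is asserted to be connected. With this fact, your first argument, that $(j\circ f\circ\iota^{-1})\circ\Phi$ is the inclusion, already suffices.

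A smaller point concerns Step 5. The set $W$ is analytic only in $X^\circ=X\setminus Z$, not in $X$. So you cannot simply add $W\cap V$ as an analytic subset of $V$. It must be handled like $\Phi^{-1}(P)$, via a countable exhaustion of $V\setminus Z$ and Josefson's theorem, as the paper does.
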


\begin{proof}
We follow the proof of Proposition~\ref{lem:continuation}. The only
additional point is that the continuation space is initially a
Riemann domain over $M$, rather than over $\C^n$. 
Indeed, retaining all branches of the continuation gives a connected
covering Riemann domain
$  \pi:X\rightarrow M$
and a single-valued holomorphic map $    F:X\rightarrow\PP^\infty.$
Put $g=f^{-1}$. 
There is a canonical lift $\iota:\Omega\to X$ satisfying
\[
   \pi\circ\iota=\id_\Omega,
    \quad
    F\circ\iota=\mathcal B_D\circ g.
\]
Choose an orthonormal basis $(\phi_j)_{j\geq0}$ of $A^2(D)$ such that
$ \phi_0\equiv c:=\Vol(D)^{-1/2}.$
Let 
$$    Z_0:=F^{-1}\{[w_0:w_1:\cdots]\in\PP^\infty:w_0=0\}.$$
Then $Z_0$ is a proper analytic subset of $X$ and
$Z_0 \cap\iota(\Omega)=\varnothing$. On $X^0:=X\setminus Z_0$, write
$  F=[1:h_1:h_2:\cdots].$
Exactly as in the proof of Proposition~\ref{lem:continuation}, if $    u=\sum_{j\geq0}a_j\phi_j\in A^2(D),$
then
\[ \widehat u := c\left(a_0+\sum_{j\geq1}a_jh_j\right)\]
defines a holomorphic function on $X^0$, and satisfies $    \widehat u\circ\iota=u\circ g.$
Thus every function in $A^2(D)$ extends as a single-valued holomorphic function on $X^0$.
Here is the additional observation needed when the target is a complex manifold.
Since $D$ is bounded, its coordinate functions
$z_1,\ldots,z_n$ belong to $A^2(D)$. Their continuations therefore
define a holomorphic map
\[   G:=(\widehat z_1,\ldots,\widehat z_n):
    X^0\rightarrow\C^n,
    \quad
    G\circ\iota=g.\]
Let
$$C:=\{x\in X^0:\det dG_x=0\}.$$
$C$ is a proper analytic subset, because $G=g$ on the canonical sheet.
Consequently,
$  X^1:=X^0\setminus C$
is connected and $(X^1,G)$ is a Riemann domain over $\C^n$. Moreover,
the map
\[   j_0:=\iota\circ f:D\rightarrow X^1\]
satisfies
$ G\circ j_0=\id_D,$ 
and every element of $A^2(D)$ extends holomorphically to $X^1$.

Let
\[   j:D\rightarrow\widehat D_2,  \quad   \widehat p_2:\widehat D_2\rightarrow\C^n\]
be the $A^2(D)$-envelope of holomorphy of $D$. The universal property yields a holomorphic
map $    \Phi:X^1\rightarrow\widehat D_2$
such that
\[
    \Phi\circ j_0=j,
    \quad
    \widehat p_2\circ\Phi=G.
\]
Notice that $\Phi$ is locally biholomorphic, since
\[   d\widehat p_{2,\Phi(x)}\circ d\Phi_x=dG_x\]
and both $G$ and $\widehat p_2$ are locally biholomorphic.

Because $D$ is pseudoconvex, its ordinary envelope of holomorphy is
$D$ itself. Irgens's theorem therefore implies that
\[  S:=\widehat D_2\setminus j(D)\]
is pluripolar. Hence $ R:=\Phi^{-1}(S)$
is a relatively closed pluripolar subset of $X^1$. Moreover,
$X^1\setminus R$ is connected and, for every $x\in X^1\setminus R$, $    G(x)=\widehat p_2(\Phi(x))\in D.$
The two holomorphic maps
\[  \pi,\ f\circ G:X^1\setminus R\rightarrow M \]
agree on the nonempty open subset $j_0(D)$. The identity theorem gives $    \pi=f\circ G$ on  $X^1\setminus R.$
In particular, $    \pi(X^1\setminus R)\subset f(D)=\Omega.$
Set $    Q:=Z\cup C\cup R\subset X.$
Then $ M\setminus\Omega\subset\pi(Q).$
We briefly justify the pluripolarity assertion across the successive
exceptional sets. Fix a coordinate ball $V\subset X$. Cover
$V\setminus Z$ by countably many relatively compact open sets $W_\nu$
and cover $V\setminus(Z\cup C)$ by countably many relatively compact
open sets $U_\mu$. The sets $    C\cap W_\nu, R\cap U_\mu$
are locally pluripolar in $V$, while $Z\cap V$ is analytic. Josefson's
theorem in the coordinate ball $V$ shows that each of these sets is
locally pluripolar in $V$. Hence $Q\cap V$ is locally pluripolar, and therefore $Q$ is
locally pluripolar in $X$.
Finally, cover $X$ by countably many open sets on which $\pi$ is
biholomorphic. It follows that $\pi(Q)$ is locally pluripolar in $M$.
Consequently, $    M\setminus\Omega\subset\pi(Q)$
is locally pluripolar.
\end{proof}

Now, Theorem \ref{thm:main2} follows verbatim the same argument as Theorem \ref{main1}, using the following lemma, whose proof is also similar to that of Lemma \ref{lem:kernel-removal}.

\begin{lemma}\label{lem:manifold-removal}
Let $Y$ be an $n$-dimensional complex manifold and let $P\subset Y$
be relatively closed and locally pluripolar. Then restriction induces a
unitary isomorphism $    A^2_{(n,0)}(Y)\rightarrow A^2_{(n,0)}(Y\setminus P).$
Consequently, $ \omega_Y|_{Y\setminus P}=\omega_{Y\setminus P}.$
\end{lemma}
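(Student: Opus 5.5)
The plan is to imitate the proof of Lemma~\ref{lem:kernel-removal}, replacing the Euclidean $L^2$-removability by a local version for $(n,0)$-forms. First, a locally pluripolar set has Lebesgue measure zero in every coordinate chart, so $P$ is a null set for any smooth volume form on $Y$. Consequently, for $\varphi\in A^2_{(n,0)}(Y)$ we have $\ii^{\,n^2}\int_{Y\setminus P}\varphi\wedge\bar\varphi=\ii^{\,n^2}\int_Y\varphi\wedge\bar\varphi$, so restriction is an isometry. Injectivity follows from the identity theorem, because $Y\setminus P$ is open and dense. Density holds since a pluripolar set has empty interior.

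The main step, and the only real obstacle, is surjectivity. Let $\psi\in A^2_{(n,0)}(Y\setminus P)$ and fix $p\in P$. Choose a coordinate ball $U\ni p$ with coordinates $z$, small enough that $P\cap U\subset\{u=-\infty\}$ for some $u\in\operatorname{PSH}(U)$ with $u\not\equiv-\infty$. On $U\setminus P$ write $\psi=h\,dz_1\wedge\cdots\wedge dz_n$. Then $\int_{U\setminus P}|h|^2\,dV<\infty$, up to a positive constant coming from $\ii^{\,n^2}dz\wedge d\bar z$ being a multiple of Lebesgue measure. Since $P\cap U$ is relatively closed and pluripolar in $U$, the $L^2$-removability theorem (\cite[Lemma~1]{I04}, \cite{S82}) gives a holomorphic extension of $h$ to $U$. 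We need $U\setminus P$ to be connected, or else we apply the theorem on each component. A closed pluripolar set does not disconnect a connected open set, since it has Hausdorff codimension at least two in the relevant sense and pluripolar sets are removable for bounded plurisubharmonic and holomorphic functions. So $U\setminus P$ is connected and the extension is unique.

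The local extensions are unique, so they agree on overlaps by the identity theorem and glue to a holomorphic $(n,0)$-form $\Psi$ on $Y$. Because $P$ is null, $\Psi$ has the same $L^2$ norm as $\psi$. This shows that restriction is a surjective isometry, hence unitary. The same argument applied to $Y$ shows that $Y\setminus P$ is connected, so $Y\setminus P$ is a complex manifold in the paper's convention.

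Finally, take an orthonormal basis $(\varphi_j)$ of $A^2_{(n,0)}(Y)$. Its restrictions form an orthonormal basis of $A^2_{(n,0)}(Y\setminus P)$. Therefore $K_{Y\setminus P}=K_Y$ on $(Y\setminus P)\times(Y\setminus P)$, and the Bergman--Bochner maps coincide on $Y\setminus P$. Hence $\omega_Y|_{Y\setminus P}=\cB_Y^*\omega_{\FS}|_{Y\setminus P}=\omega_{Y\setminus P}$, including the statement that one is well defined if and only if the other is.
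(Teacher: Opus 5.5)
Your proposal is correct and follows the paper's route. The paper only says the proof is similar to that of Lemma~\ref{lem:kernel-removal}, and you carry this out: you apply the Euclidean $L^2$-removability theorem (Irgens/Siciak) in coordinate balls where $P$ is pluripolar, glue the unique local extensions, and use that $P$ is Lebesgue-null to get equality of norms, hence of Bergman kernels and metrics. Your observation that a relatively closed locally pluripolar set does not disconnect $Y$ (for example, via removability of a locally constant bounded holomorphic function) correctly supplies the connectedness that Lemma~\ref{lem:kernel-removal} takes as a hypothesis but that the manifold version leaves implicit.
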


\begin{remark}
Theorem~\ref{thm:main2} shows that $E=M\setminus f(D)$ is locally
pluripolar. If $M$ is Stein, then the Stein version of Josefson's
theorem  implies that $E$ is globally pluripolar in $M$.
\end{remark}

\begin{remark}
Let $ M:=(\Delta\setminus\{0\}) \times\Delta$ be a bounded pseudoconvex domain, $D:=\Delta^2\setminus\{(0,0)\}$ not pseudoconvex.
Then $f: M\rightarrow D$ given by $f(z_1,z_2)=(z_1,z_2)$ satisfies $f^*\omega_{D}=\omega_{M}$ by Lemma \ref{lem:kernel-removal}.
This shows that $D$ is not necessarily Stein.
\end{remark}

\begin{corollary}\label{al1}
Let $D_1, D_2 \subset \mathbb{C}^n$ be bounded domains.
Assume that $(D_1, \omega_{D_1})$ is complete.  
Then the holomorphic map $f: D_1 \rightarrow D_2$ satisfying $f^*\omega_{D_2} = \lambda \omega_{D_1}$ for some constant $\lambda>0$ is a biholomorphism. In particular, $\lambda=1$.
\end{corollary}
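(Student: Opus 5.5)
The plan is to combine injectivity with an extension argument that uses completeness of \(\omega_{D_1}\); pseudoconvexity of \(D_1\) is not assumed, so the route through Theorem~\ref{domain} is not available.

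First, \(f\) is injective. Since \(D_1\) is bounded, the coordinate functions lie in \(\A(D_1)\), so \(\A(D_1)\) separates points, and Proposition~\ref{inj} applies. As \(f\) is an immersion between equidimensional manifolds, it is a biholomorphism onto the open set \(\Omega:=f(D_1)\subset D_2\). It remains to show \(\Omega=D_2\); the normalization \(\lambda=1\) then follows from the biholomorphic invariance of the Bergman metric, since \(\omega_{D_1}=f^*\omega_{D_2}=\lambda\omega_{D_1}\).

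Next, consider the inverse \(g:=f^{-1}:\Omega\to D_1\). It satisfies
\[
g^*\omega_{D_1}=\lambda^{-1}\omega_{D_2}
\]
on the connected open set \(\Omega\subset D_2\), so \(g\) is a local holomorphic Bergman isometry from \(D_2\) into \(D_1\), with constant \(\lambda^{-1}\). Its target \(D_1\) has complete Bergman metric by assumption, and \(\A(D_1)\) separates points. Mok's extension result, Proposition~\ref{m2012}, therefore extends \(g\) to a holomorphic Bergman isometry \(G:D_2\to D_1\) with \(G^*\omega_{D_1}=\lambda^{-1}\omega_{D_2}\). The same boundedness remark shows \(\A(D_2)\) separates points, so Proposition~\ref{inj} makes \(G\) injective.

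Now \(f\circ G:D_2\to D_2\) is holomorphic and equals the identity on \(\Omega\). By the identity theorem \(f\circ G=\id_{D_2}\), so \(f\) is surjective onto \(D_2\) and \(\Omega=D_2\). Hence \(f\) is a biholomorphism with inverse \(G\), and \(\lambda=1\) as explained above.

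There is an alternative way to get \(\lambda=1\). Proposition~\ref{constant} applied to \(f\) gives \(\lambda\le 1\), and applied to \(G\) gives \(\lambda^{-1}\le1\).

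The main point to check is that Proposition~\ref{m2012} genuinely applies to the germ \(g\). Its hypotheses are that the source \(D_2\) has a well-defined Bergman metric, which holds for any bounded domain, and that the target \((D_1,\omega_{D_1})\) is complete with point-separating Bergman space. Completeness is used only at this step, to guarantee that analytic continuation of \(g\) along paths in \(D_2\) never leaves \(D_1\). The rest of the argument is formal.
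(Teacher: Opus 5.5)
Your proof is correct, but it takes a different route from the paper's proof of this corollary.

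The paper gives two arguments. In the first, completeness of $\omega_{D_1}$ forces $D_1$ to be pseudoconvex (Bremermann). Theorem~\ref{main1} then gives $f:D_1\to D_2\setminus E$ biholomorphic with $E$ pluripolar, and completeness rules out $E\neq\varnothing$. The second is elementary. After Proposition~\ref{inj}, if $f(D_1)\neq D_2$, one takes a path in $f(D_1)$ of finite $\omega_{D_2}$-length running to a point $w\in D_2\setminus f(D_1)$. Pulling back gives a Cauchy sequence in $(D_1,\omega_{D_1})$, which converges by completeness to some $z$ with $f(z)=w$, a contradiction.

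You instead apply Mok's extension theorem (Proposition~\ref{m2012}) to the inverse germ $f^{-1}$ on $\Omega$, with the complete, point-separating target $D_1$. You then conclude from $f\circ G=\mathrm{id}$ on $\Omega$ via the identity theorem. This is exactly the ``alternative way'' the paper uses for the subsequent local corollary, where both metrics are complete. Every step checks out, though the injectivity of $G$ is never used.

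What your route buys is that it bypasses pseudoconvexity and Theorem~\ref{main1} entirely and produces the inverse map explicitly. Its cost is that it rests on Mok's deep extension theorem. The paper's second argument shows that completeness of $(D_1,\omega_{D_1})$ alone, through a direct Cauchy-sequence argument, already suffices for surjectivity.

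One correction to your opening remark: the route through Theorem~\ref{domain} is in fact available. A bounded domain with complete Bergman metric is pseudoconvex by Bremermann's theorem, and that is precisely how the paper's first proof begins.
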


\begin{proof}
Since $(D_1, \omega_{D_1})$ is complete, $D_1$ is pseudoconvex \cite{Br55}. By Theorem \ref{main1}, there exists a relatively closed pluripolar set $E$ in \(D_2\), 
such that
$  f:D_1\rightarrow D_2\setminus E $
is biholomorphic. By Lemma \ref{lem:kernel-removal}, $\omega_{D_2}=\omega_{D_2\setminus E}$. 
If \((D_1, \omega_{D_1})\) is complete, then
$   \bigl(D_2\setminus E, \omega_{D_2}|_{D_2\setminus E}\bigr) $
is complete because it is isometric to \((D_1, \omega_{D_1})\). 
If \(E\neq
\varnothing\), any sequence of points $\{ z_j\} \subset D_2\setminus E$ with $z_j \to z_0 \in E$ is a Cauchy sequence in $\omega_{D_2}$ and converges to $z_0$. 
This contradicts
completeness. 
Thus \(E=\varnothing\).

We can also prove the result without using Theorem \ref{main1}. By Proposition \ref{inj}, it suffices to
  show that $f$ is surjective. Suppose not. Namely, there exists $ D_2\setminus f(D_1) \not= \emptyset$. Since ${\rm Jac}_{\mathbb{C}}(f)$ is non-degenerate everywhere, $f(D_1)$ is a connected, open set. Then there exists $w \in  D_2\setminus f(D_1)$ and a continuous curve $\gamma: [0, 1) \rightarrow f(D_1)$ such that $\gamma(t)  \rightarrow w$ as $t \rightarrow 1$. Let $t_j=1-\frac{1}{j}$. As $\omega_{D_2}$ is quasi-isometry to the Euclidean metric near $w \in D_2$, $\{\gamma(t_j)\}_{j=1}^\infty$ is a Cauchy sequence in $(D_2, \omega_{D_2})$. Let $f(z_j)=\gamma(t_j)$. It follows from the isometry condition that $\{z_j\}_{j=1}^\infty$ is a Cauchy sequence in $(D_1, \omega_{D_1})$. Since $(D_1, \omega_{D_1})$ is complete, there exists $z \in D_1$, such that $z_j \rightarrow z$ as $j \rightarrow \infty$. By continuity, $f(z)=\lim_{j \rightarrow \infty} f(z_j)=\lim_{j\rightarrow \infty} \gamma(t_j)=w$. This contradiction shows that $f$ is surjective.
\end{proof}

\begin{remark}
It is well-known that  if the bounded domain  $(D, \omega_{D})$ is complete, then $D$ is a $L^2$-domain of holomorphy. However, 
the assumption of the completeness of $(D_1, \omega_{D_1})$  cannot be weakened to the $L^2$-domain of holomorphy. For example, the Hartogs triangle $\mathbb{H}:=\{(z_1, z_2) \in \mathbb{C}^2: |z_1|<|z_2|<1\}$ is an $L^2$-domain of holomorphy while the Bergman metric is not complete. Note that $\Phi: \mathbb{H} \to \Delta \times \left( \Delta \setminus \{0\} \right)$ given by $\Phi(z_1, z_2)=(\frac{z_1}{z_2}, z_2)$ is a biholomorphism. Then $\Phi: \mathbb{H} \rightarrow \Delta^2$ satisfies $\Phi^*ds^2_{\Delta^2} = ds^2_{\mathbb{H}}$ as $ds^2_{\Delta^2}=ds^2_{\Delta\times \left( \Delta \setminus \{0\} \right)}$. 
\end{remark}

\subsection{Local holomorphic Bergman isometries}

Assuming that the Bergman metric is complete, we obtain the 
 local version of Theorem \ref{main1} and Theorem \ref{thm:main2} using the deep extension results due to Mok \cite{M12} and Huang-Li \cite{HL26}.

 \begin{proof}[Proof of Theorem \ref{al2}]
 The proof follows by combining Corollary \ref{yy} and Theorem \ref{main1}.
 \end{proof}

 \begin{proof}[Proof of Theorem \ref{al3}]
 The proof follows by combining Corollary \ref{yy2} and Theorem \ref{thm:main2}.
 \end{proof}

 \begin{corollary}
 Let $D_1, D_2 \subset \mathbb{C}^n$ be bounded domains. 
Assume that $(D_1, \omega_{D_1})$ and $(D_2, \omega_{D_2})$ are complete. Then
for any connected open set $U \subset D_1$, the holomorphic map $f: U \rightarrow D_2$ satisfying $f^*\omega_{D_2} = \lambda \omega_{D_1}$ on $U$ for some constant $\lambda>0$ must extend to a biholomorphism.
\end{corollary}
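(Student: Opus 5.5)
The plan is to reduce the statement to Corollary~\ref{al1} via the extension results of the previous section. Completeness of $(D_2,\omega_{D_2})$ implies, by \cite{Br55}, that $D_2$ is pseudoconvex; the same holds for $D_1$. Since $D_1$ is a bounded domain, it is a Stein manifold, and its Bergman metric $\omega_{D_1}$ is well defined (positive definite, as explained in the Preliminaries).

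We are therefore exactly in the setting of Corollary~\ref{yy}, taking $M=D_1$ and $D=D_2$. This yields an injective holomorphic Bergman isometry $F:D_1\to D_2$ with $F|_U=f$ and $F^*\omega_{D_2}=\lambda\omega_{D_1}$ on all of $D_1$. The identity holds globally because both sides are real analytic and agree on the open set $U$, and $D_1$ is connected. Alternatively, one can invoke Proposition~\ref{m2012} directly: $A^2(D_2)$ separates points, since the coordinate functions lie in $A^2(D_2)$, and $\omega_{D_2}$ is complete.

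Next, Corollary~\ref{al1} applies to $F$, because $(D_1,\omega_{D_1})$ is complete. Hence $F$ is a biholomorphism of $D_1$ onto $D_2$ and $\lambda=1$. The surjectivity argument there uses only completeness of $\omega_{D_1}$ together with injectivity from Proposition~\ref{inj}: a path in $F(D_1)$ tending to a point of $D_2\setminus F(D_1)$ pulls back to a Cauchy sequence, which must converge in $D_1$, a contradiction. So $f$ extends to the biholomorphism $F$.

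The only delicate point is checking the hypotheses of the extension step. One must confirm that completeness of $\omega_{D_2}$ indeed gives pseudoconvexity of $D_2$ (Bremermann), and that $U$ is connected, so that the continuation is unique and the isometry identity propagates globally. Both are immediate from the assumptions, so no real obstacle is expected; the substance lies in Proposition~\ref{m2012} and Corollary~\ref{al1}, which we take as given.
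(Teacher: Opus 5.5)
Your proposal is correct and is essentially the paper's proof: extend $f$ to a global holomorphic Bergman isometry $F:D_1\to D_2$ by Mok's extension theorem, then conclude with Corollary~\ref{al1} using completeness of $\omega_{D_1}$. Your alternative via Proposition~\ref{m2012} is exactly the paper's extension step, and your route through Corollary~\ref{yy} also works, with $D_1$ Stein because it is bounded \emph{and} pseudoconvex (by Bremermann), not merely bounded; the paper additionally sketches a second argument applying Mok's theorem to the local inverse of $f$ and concluding by the identity theorem.
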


\begin{proof}
If $(D_2, \omega_{D_2})$ is complete, $f$ extends to a holomorphic Bergman isometry from $D_1$ to $D_2$ by Mok's extension theorem \cite{M12}.
Then the conclusion follows from Corollary \ref{al1}. The alternative way is to apply Mok's extension theorem \cite{M12} to the local inverse of $f$, yielding that the local inverse of $f$ extends to holomorphic Bergman isometry from $D_2$ to $D_1$. Moreover, $f$ is a biholomorphism by the identity theorem for holomorphic functions.
\end{proof}

\begin{remark}
Corollary \ref{al1} apply to the case when $D_1, D_2$ are hyperconvex \cite{BP98, H99}. This includes many examples, such as pseudoconvex domains with Lipschitz boundary.
\end{remark}

\subsection{General case}

Inspired by the proof of Proposition 1.4 in \cite{ETX25b} and the proof of Proposition 4.1 in \cite{BGNX26}, we obtain a partial generalization of Theorem \ref{domain}. We first recall the following definition introduced by Ebenfelt-Treuer-Xiao in \cite{ETX25b}.

\begin{definition}[Definition 1.3 in \cite{ETX25b}]
Let $U$ be a domain in $\mathbb{C}^n$. A relatively closed (possibly empty) subset $F \subset U$ is called a Bergman-negligible subset
of $U$ if $F$ is of Lebesgue measure zero, and every $L^2$-holomorphic function on $U \setminus F$ extends as an $L^2$-holomorphic function on $U$.
\end{definition}

\begin{proposition}\label{negligible}
Let \(D_1,D_2\subset\C^n\) be bounded domains, and let
\( f:D_1\rightarrow D_2 \)
be a bona fide
 holomorphic Bergman isometry satisfying
\[  f^*\omega_{D_2}=\omega_{D_1}. \]
Then 
\( E:=D_2\setminus f(D_1)\)
is Bergman-negligible  in \(D_2\). In particular,
\( f:D_1\rightarrow D_2\setminus E \)
is a biholomorphism.
\end{proposition}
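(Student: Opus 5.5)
Set \(\Omega:=f(D_1)\). By Proposition~\ref{inj} (\(\A(D_1)\) separates points, since it contains the coordinate functions), \(f:D_1\to\Omega\) is a biholomorphism onto an open subset of \(D_2\). Transporting the metric identity gives \(\omega_{D_2}|_\Omega=\omega_\Omega\), because the Bergman metric is a biholomorphic invariant. The plan has three steps.

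\textbf{Step 1: identify the restricted kernel.} As in the proof of Proposition~\ref{constant}, now with \(\lambda=1\), Calabi's diastasis argument gives
\[
\frac{|K_\Omega(z,w)|^2}{K_\Omega(z,z)K_\Omega(w,w)}=\frac{|K_{D_2}(z,w)|^2}{K_{D_2}(z,z)K_{D_2}(w,w)}
\]
on \(\Omega\times\Omega\). Hence \(\log K_\Omega(z,w)-\log K_{D_2}(z,w)\) has pluriharmonic real part of the form \(\psi(z)+\overline{\psi(w)}\) locally. So there is a nowhere-vanishing holomorphic function \(g\) on \(\Omega\) with
\[
K_\Omega(z,w)=g(z)\,K_{D_2}(z,w)\,\overline{g(w)} .
\]
Globalizing \(g\) needs care: fix \(w_0\) and set \(g(z)=K_\Omega(z,w_0)/K_{D_2}(z,w_0)\) up to a constant. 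This is meromorphic, and the vanishing-order comparison (\eqref{eq:orders} with \(\lambda=1\)) shows it has no poles or zeros.

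\textbf{Step 2: compare Bergman spaces via moment theory.} This is the heart of the argument, using the idea from \cite{ETX25b,BGNX26}. The identity of Step 1 says that the map \(T:h\mapsto g\cdot(h|_\Omega)\) sends the span of \(\{K_{D_2}(\cdot,w)\}_{w\in\Omega}\) into \(\A(\Omega)\) with \(\langle TK_{D_2}(\cdot,w),TK_{D_2}(\cdot,v)\rangle_\Omega=\langle K_{D_2}(\cdot,w),K_{D_2}(\cdot,v)\rangle_{D_2}\). Since such kernels are dense in \(\A(D_2)\) (the identity theorem with \(\Omega\) open), \(T\) extends to an isometry \(\A(D_2)\to\A(\Omega)\). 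Its image contains all \(K_\Omega(\cdot,w)\), so \(T\) is unitary. Thus \(\int_\Omega |g|^2|h|^2=\int_{D_2}|h|^2\) for all \(h\in\A(D_2)\).

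Now apply the moment argument. Polynomials lie in \(\A(D_2)\), and polarization of the previous identity gives
\[
\int_\Omega |g|^2\,P\bar Q\,dV=\int_{D_2}P\bar Q\,dV
\]
for all polynomials \(P,Q\). By Stone--Weierstrass on the compact set \(\overline{D_2}\), polynomials in \(z,\bar z\) are dense in continuous functions. Hence the measures \(|g|^2\mathbf 1_\Omega\,dV\) and \(\mathbf 1_{D_2}\,dV\) coincide, which forces \(|g|=1\) almost everywhere on \(\Omega\) and \(\Vol(D_2\setminus\Omega)=0\).

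\textbf{Step 3: conclude.} Since \(|g|\equiv1\), \(g\) is a unimodular constant, and we may normalize \(g=1\). Then \(T\) is plain restriction. Its surjectivity onto \(\A(\Omega)\) says exactly that every \(L^2\)-holomorphic function on \(\Omega=D_2\setminus E\) extends to an element of \(\A(D_2)\). Together with the measure-zero statement from Step 2, this shows \(E\) is Bergman-negligible. \(E\) is relatively closed because \(\Omega\) is open.

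I expect the main obstacle to be Step 1 together with the density and surjectivity part of Step 2: producing a global single-valued \(g\) without zeros or poles, and verifying that the image of \(T\) is all of \(\A(\Omega)\). The moment computation itself is routine once the unitary identification is in place.
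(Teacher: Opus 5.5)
Your proposal is correct and follows essentially the same route as the paper. It uses the nowhere-vanishing multiplier obtained from the order comparison \eqref{eq:orders} with $\lambda=1$, a unitary multiplication operator built on reproducing kernels, and the Stone--Weierstrass moment argument. The one difference is direction: the paper works with the reciprocal multiplier $h_\xi=1/g$ and the operator $A^2(\Omega)\to\mathcal R$, so it must first extend $h_\xi=M_h(1)$ to some $\widetilde H\in A^2(D_2)$, whereas your $T\colon A^2(D_2)\to A^2(\Omega)$ yields $|g|^2\mathbf 1_\Omega\,dV=\mathbf 1_{D_2}\,dV$ directly, which is a slight simplification. The one check you left implicit, that the isometric extension of $T$ is still $h\mapsto g\,h|_\Omega$, follows because $A^2$-convergence implies pointwise convergence.
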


\begin{proof}
By Proposition \ref{inj}, \(f:D_1\to\Omega:=f(D_1)\) is biholomorphic. 
Fix $\xi \in \Omega$. 
By the similar argument as in the proof of Proposition \ref{constant},
it follows from 
 \eqref{eq:orders}, applied to
 $\lambda=1$,  that both \(\frac{K_{D_2}(z, \xi)}{K_{\Omega}(z, \xi)}\) and \(\frac{K_{\Omega}(z, \xi)}{K_{D_2}(z, \xi)}\) have no pole
divisor and therefore extend holomorphically across their common
zeros to $\Omega$. The extensions are reciprocal to each other. Hence
$$  h_\xi(z):=
  \left(\frac{K_{\Omega}(\xi, \xi)}{K_{D_2}(\xi, \xi)}\right)^{\frac{1}{2}}
  \frac{K_{D_2}(z, \xi)}{K_{\Omega}(z, \xi)} $$
is a nowhere-vanishing holomorphic function on \(\Omega\).
It follows from \eqref{eq:normalized} with \(w= \xi\) that
 $$ K_{D_2}(z, z)=|h_\xi(z)|^2 K_{\Omega}(z,z)$$
holds for every $z \in \Omega$.
By polarization, we have
\begin{equation}\label{eq:multiplier}
  K_{D_2}(z, w)=h_\xi(z) \overline{h_\xi(w)} K_{\Omega}(z, w)
\end{equation}
for all $z, w \in\Omega. $

By the identity theorem for holomorphic functions,  
the restriction from \(\A(D_2)\) to the non-empty open set \(\Omega\)
is injective.  Define
\[  \mathcal R:= \{F|_{\Omega}:F\in\A(D_2)\},  \quad  \|F|_{\Omega}\|_{\mathcal R} := \|F\|_{\A(D_2)}.\]
Then \(\mathcal R\) is a reproducing-kernel Hilbert space on
\(\Omega\), and its reproducing kernel is
\( K_{D_2}|_{\Omega\times\Omega}. \)
Let \(  M_{h}: \A(\Omega)\rightarrow\mathcal R \) be given by 
\( M_{h}(g):= h_\xi  g \).
Note that the linear spans of the functions \(K_\Omega(z, w_j)\) and \(K_{D_2}(z, w_j)\), for $w_j \in \Omega$, are
dense in \(\A(\Omega)\) and \(\mathcal R\), respectively. By
\eqref{eq:multiplier}, we have 
$  M_h \left(K_\Omega(z, w_j) \right)= \frac{K_{D_2}(z, w_j)} {\overline{h_\xi(w_j)}}.$
For \(w_j, w_k \in\Omega\), the reproducing property and
\eqref{eq:multiplier} yield
\[
\begin{aligned}
~&  \left\langle M_h(K_\Omega(z, w_j)),
                   M_h(K_\Omega(z, w_k))\right\rangle_{\mathcal R} \\
  &=
  \frac{\langle K_{D_2}(z, w_j), K_{D_2}(z, w_k)\rangle_{\mathcal R}}
       {\overline{h_\xi(w_j)}h_\xi(w_k)} =
  \frac{K_{D_2}(w_k, w_j)}
       {\overline{h_\xi(w_j)}h_\xi(w_k)} =
  K_\Omega(w_k, w_j) =
  \left\langle K_\Omega(z, w_j), K_\Omega(z, w_k)
  \right\rangle_{\A(\Omega)}.
\end{aligned}
\]
Consequently, \(M_h\) is  an isometry with a closed range.
Moreover,
since the linear span
of \(K_{D_2}(z, w_j) = \overline{h_\xi(w_j)}\,M_h (K_\Omega(z, w_j))\) is dense in \(\mathcal R\), the range of
\(M_h\) is also dense. Hence \(M_h\) is surjective and therefore
unitary.

Since \(1\in\A(\Omega)\),  \(h_\xi=M_h(1) \in \mathcal R\). Hence there exists a unique
\(\widetilde H\in\A(D_2)\) such that $  \widetilde H|_{\Omega}=h.$
Let \(P\) be a holomorphic polynomial. Since \(D_2\) is bounded,
\(\widetilde H P\in\A(D_2)\), and it is the unique extension of \(h_\xi P\) to
\(D_2\). Moreover, 
\[  \int_{D_2}|\widetilde H|^2|P|^2\,dV =  \int_{\Omega}|P|^2\,dV.\]
By polarization, for all holomorphic
polynomials \(P,Q\),
\begin{equation}\label{eq:moments}
  \int_{D_2}   |\widetilde H|^2P\overline{Q}\,dV =  \int_{\Omega}    P\overline{Q}\,dV
\end{equation}
holds.
Define $d\mu :=  |\widetilde H|^2\mathbf 1_{D_2}\,dV$ and $d\nu  :=  \mathbf 1_{\Omega}\,dV$ to be
 the finite Borel measures on \(\overline{D_2}\). The complex $*$-algebra generated by 1, the holomorphic coordinate functions 
\(z_1,\ldots,z_n\), and their conjugates is dense in
\(C(\overline{D_2})\) by the complex version of the Stone-Weierstrass theorem.
Every monomial in \(z\) and \(\overline z\) is of the form
\(P\overline Q\). Hence \eqref{eq:moments} implies that
\(\mu=\nu\). Namely, 
\begin{equation}\label{eq:measures}
  |\widetilde H|^2\mathbf 1_{D_2}\,dV =  \mathbf 1_{\Omega}\,dV.
\end{equation}
It follows that $  |\widetilde H|=1$ 
almost everywhere on \(\Omega\). Since \(\widetilde H\) is a holomorphic function on $D_2$, 
there exists
\(c\in\C\), with \(|c|=1\), such that
\( \widetilde H\equiv c\)
on $D_2.$
It follows from \eqref{eq:measures} that $  \mathbf 1_{D_2}\,dV=  \mathbf 1_{\Omega}\,dV,$
and therefore $  \Vol(D_2\setminus\Omega)=0.$
Furthermore, \(h=\widetilde H|_{\Omega}=c\) and thus  $ \mathcal R=\A(\Omega)$
with equality of norms. 
In particular, every function in \(\A(\Omega)\) extends
uniquely to a function in \(\A(D_2)\).
By definition,  $ E:=D_2\setminus\Omega$
 is Bergman-negligible. 
\end{proof}

\subsection{Applications}

We now state some consequences. For simplicity of notation, we consider only bounded domains in \(\mathbb{C}^n\) throughout this section. The arguments extend naturally to Stein manifolds by the preceding argument.
Let \(D_1,D_2\subset\mathbb C^n\), \(n\geq 2\), be bounded
 domains. For \(j=1,2\), let \(\omega_j=\omega_{D_j}\) be the
Bergman metric, 
and denote by $\nabla_j, R_j, {\rm Ric}_j$
the Levi-Civita connection, the curvature tensor, and the Ricci form
of \(\omega_j\), respectively, where the curvature tensor \(R_j\) is
regarded as a tensor of type \((1,3)\).

\begin{definition}
A local biholomorphic map $f: D_1 \to D_2$ is called strongly curvature-preserving if $f$ preserves the full curvature tensor and all of its covariant derivatives in the sense that 
$$f^*(\nabla_{2}^k R_2) = \nabla_{1}^k R_1$$
for $k=0, 1, 2, \cdots$.
\end{definition}

\begin{corollary}\label{curv1}
Let $D_1, D_2$ be bounded domains in $\CC^n$. Assume that $(D_1, \omega_1)$ and   $(D_2, \omega_2)$ are locally irreducible as Riemannian metrics.
 Assume $D_1$ to be pseudoconvex. If a holomorphic map $f: D_1 \to D_2$ is strongly curvature-preserving and a local biholomorphism from $D_1$ to $f(D_1)$, 
then there exists a relatively closed pluripolar set $E$ in \(D_2\), 
such that $  f:D_1\rightarrow D_2\setminus E $ is biholomorphic.
\end{corollary}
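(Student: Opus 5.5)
The plan is to reduce Corollary~\ref{curv1} to Theorem~\ref{domain}: first show that $f$ is a holomorphic Bergman isometry, $f^*\omega_2=\lambda\omega_1$ for some constant $\lambda>0$, and then apply the theorem. The reduction rests on the Nomizu--Yano theorem \cite{NY67}.

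\emph{Step 1: $f$ is a homothety.} Since $f$ is a local biholomorphism, $g:=f^*\omega_2$ is a real-analytic K\"ahler metric on $D_1$. Pulling back by $f$ turns the hypothesis into
\[
\nabla_g^k R_g=\nabla_1^k R_1 \quad\text{for all } k\ge 0 ,
\]
where on the left $R_g$ and $\nabla_g$ are computed for $g$. Here $R$ is taken as a $(1,3)$-tensor; since this type is invariant under constant rescaling of the metric, this is exactly the setting of Nomizu--Yano. Also, $g$ is locally irreducible, because it is locally isometric to $\omega_2$. Nomizu--Yano then says the following: two real-analytic Riemannian metrics on a connected manifold, with the same $(1,3)$-curvature tensor and the same covariant derivatives of all orders, one of them irreducible, differ by a constant factor. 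This gives $g=\lambda\omega_1$ for some $\lambda>0$.

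To justify its use, I would argue locally near a point and then extend by analyticity and connectedness of $D_1$. The constancy of the factor comes from the standard argument: on an irreducible holonomy module, any parallel symmetric form is proportional to the metric. The main obstacle is making sure the hypotheses of \cite{NY67} match what is available. Concretely, the condition must be on the $(1,3)$-tensors with the respective connections, and the derivatives must be taken with respect to each metric's own Levi-Civita connection. This is precisely the formulation in the definition of strongly curvature-preserving. The factor is positive because both metrics are positive definite.

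\emph{Step 2: apply Theorem~\ref{domain}.} Since $f^*\omega_{D_2}=\lambda\omega_{D_1}$ with $D_1$ bounded pseudoconvex and $D_2$ bounded, Theorem~\ref{domain} gives a relatively closed pluripolar set $E\subset D_2$ such that $f:D_1\to D_2\setminus E$ is biholomorphic, with $\lambda=1$. Injectivity comes from Proposition~\ref{inj} and requires no separate argument, so the conclusion follows.
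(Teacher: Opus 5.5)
Your proposal is correct and is essentially the paper's proof. The paper also sets $\omega_1'=f^*\omega_2$ on $D_1$, applies Theorem~A of \cite{NY67} (phrased there as the identity map of $D_1$ being strongly curvature-preserving between $\omega_1$ and $\omega_1'$) to get $f^*\omega_2=\lambda\omega_1$, and then invokes Theorem~\ref{domain}. Your aside on why the factor is constant is loose, since $f^*\omega_2$ is not a priori $\nabla_1$-parallel. What the real-analytic setting actually gives is that both metrics are invariant under the same holonomy algebra, namely the one spanned by the common tensors $\nabla^kR$; Schur's lemma then makes them pointwise proportional, and constancy follows, e.g.\ from $d(c\,\omega_1)=dc\wedge\omega_1=0$ with $n\geq2$. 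Since you cite \cite{NY67} for this step, the looseness does not affect the argument.
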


\begin{proof}
 Consider the metric $\omega'_1=f^*\omega_2$ and denote the Levi-Civita connection, the curvature tensor of $\omega'_1$ by $\nabla'_1, R'_1$. 
Obviously the identity map $i: D_1 \to D_1$ is strongly curvature-preserving. 
By the classical result of Nomizu-Yano (Theorem A in \cite{NY67}), $i^*\omega'_1=\lambda \omega_1$ for $\lambda>0$, thus $f^*\omega_2=\lambda\omega_1$.  It thus follows from Theorem \ref{domain} that there exists a relatively closed pluripolar set $E$ in \(D_2\), 
such that $  f:D_1\rightarrow D_2\setminus E $ is biholomorphic.
\end{proof}

\begin{corollary}\label{curv}
Let $D_1, D_2$ be bounded domains in $\CC^n$. Assume that $(D_1, \omega_1)$ and   $(D_2, \omega_2)$ are locally irreducible as Riemannian metrics.
Assume $(D_j, \omega_j)$ to be complete for $j=1, 2$. For an open connected subset $U \subset D_1$, if a local biholomorphic map $f: U \to D_2$ is strongly curvature-preserving, 
then $f$ extends to a biholomorphism from $D_1$ to $D_2$.
\end{corollary}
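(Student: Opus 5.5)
The plan is to mirror the proof of Corollary \ref{curv1}, replacing the global results by their local versions. Completeness is used at three points: to make Mok's extension theorem (Proposition \ref{m2012}) applicable, to make $D_1$ pseudoconvex via \cite{Br55}, and, through Corollary \ref{al1}, to rule out any pluripolar defect.

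\textbf{Step 1: from curvature to a Bergman isometry.} Put $\omega_1' := f^*\omega_2$ on $U$. This is a real-analytic K\"ahler metric on $U$ because $f$ is locally biholomorphic. Since $f$ is strongly curvature-preserving, the curvature tensor of $\omega_1'$ and all its covariant derivatives (with respect to its own Levi-Civita connection $\nabla_1'$) coincide with those of $\omega_1$. In other words, the identity map of $U$ is strongly curvature-preserving from $(U,\omega_1)$ to $(U,\omega_1')$.

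Since $(D_1,\omega_1)$ is locally irreducible, Theorem A of Nomizu--Yano \cite{NY67} gives $\omega_1' = \lambda\omega_1$ on $U$ for some constant $\lambda>0$; here one may shrink $U$ to a small ball if needed. Hence $f^*\omega_2=\lambda\omega_1$ on $U$, so $f$ is a local holomorphic Bergman isometry.

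\textbf{Step 2: extension.} We need $A^2(D_2)$ to separate points of $D_2$. This holds because $D_2$ is a bounded domain, so the coordinate functions lie in $A^2(D_2)$. Together with the completeness of $(D_2,\omega_2)$, Proposition \ref{m2012} then shows that $f$ extends to a holomorphic Bergman isometry $F:D_1\to D_2$ with $F^*\omega_2=\lambda\omega_1$.

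\textbf{Step 3: bijectivity.} Since $(D_1,\omega_1)$ is complete, Corollary \ref{al1} shows that $F$ is a biholomorphism and that $\lambda=1$. Alternatively, one can argue symmetrically: apply Mok's theorem to the local inverse of $f$, which is legitimate because $(D_1,\omega_1)$ is complete. This produces a holomorphic Bergman isometry $G:D_2\to D_1$ with $G\circ F=\id$ near a point, and the identity theorem gives $G\circ F=\id$ and $F\circ G=\id$ globally.

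\textbf{Main obstacle.} The only delicate step is Step 1, namely justifying the application of Nomizu--Yano on the open set $U$ rather than on all of $D_1$. The key observation is that their Theorem A is local: it concerns two metrics on a connected manifold sharing all curvature jets, one of them irreducible. To be safe, I would first apply it on a small ball inside $U$ where $\omega_1$ is irreducible. Real-analyticity of both metrics then propagates the proportionality $f^*\omega_2=\lambda\omega_1$ to all of the connected set $U$.
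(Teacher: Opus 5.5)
Your proof is correct and follows the paper's argument. Nomizu--Yano \cite{NY67}, applied to the identity map between $(U,\omega_1)$ and $(U,f^*\omega_2)$, makes $f$ a local Bergman isometry, and the complete-case extension theory then produces a global biholomorphism; the paper's written proof cites Theorem~\ref{al2} at that second stage, while your Steps~2--3 (Proposition~\ref{m2012} followed by Corollary~\ref{al1}, or Mok applied to the inverse germ) reproduce the paper's own proof of the unnumbered corollary on complete domains that immediately precedes Corollary~\ref{curv}.
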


\begin{proof}
Note that the local analytic-holonomy argument underlying Nomizu-Yano in \cite{NY67}
yields $f^*\omega_2=\lambda\omega_1$. By Proposition \ref{al2}, $f$ extends to a biholomorphism from $D_1$ to $D_2$.
\end{proof}

\begin{corollary}\label{ric1}
Let $D_1, D_2$ be bounded domains in $\CC^n$. 
 Assume that $(D_1, \omega_1)$ and   $(D_2, \omega_2)$ are both K\"ahler-Einstein manifolds (not necessarily complete) with non-zero scalar curvature and $D_1$ is pseudoconvex. If a holomorphic map  $f: D_1 \to D_2$ satisfies $f^* {\rm Ric}_2={\rm Ric}_1$, then there exists a relatively closed pluripolar set
$   E\subset D_2 $ such that $   f:D_1\rightarrow D_2\setminus E $ is biholomorphic and
$  f^*\omega_2=\omega_1.$
\end{corollary}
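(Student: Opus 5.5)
The plan is to convert the Ricci condition into a Bergman isometry condition and then apply Theorem~\ref{domain}. Since $(D_j,\omega_j)$ is K\"ahler--Einstein, there are nonzero constants $c_j$ with ${\rm Ric}_j=c_j\omega_j$ for $j=1,2$. The hypothesis $f^*{\rm Ric}_2={\rm Ric}_1$ then reads
\[ c_2 f^*\omega_2=c_1\omega_1,\qquad\text{so}\qquad f^*\omega_2=\lambda\omega_1,\quad \lambda:=c_1/c_2 . \]

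\textbf{Step 1: show $\lambda>0$.} The form $\omega_1$ is positive definite, while $f^*\omega_2$ is positive semidefinite. Hence $\lambda\omega_1=f^*\omega_2$ forces $\lambda\geq0$, and $\lambda=0$ is impossible because $c_1\neq0$. So $\lambda>0$. In particular $f^*\omega_2$ is positive definite, so $f$ is a local biholomorphism.

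\textbf{Step 2: apply the main theorem.} By Step 1, $f$ is a holomorphic Bergman isometry in the sense of \eqref{bgiso}. Since $D_1$ is bounded and pseudoconvex, Theorem~\ref{domain} applies. It gives a relatively closed pluripolar set $E\subset D_2$ such that $f:D_1\to D_2\setminus E$ is biholomorphic, and it gives $\lambda=1$. Therefore $f^*\omega_2=\omega_1$.

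\textbf{Sign issue.} The only point needing care is the sign of $c_1/c_2$, which Step 1 settles by positivity. It is also worth noting that $c_1=c_2$ follows a posteriori from $\lambda=1$. This is consistent with the fact that for Bergman metrics ${\rm Ric}=-\omega$ up to a term $\ddc\log(\text{Bergman invariant})$. That observation is not needed for the argument.
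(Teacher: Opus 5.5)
Your proof is correct and follows the paper's argument. You write ${\rm Ric}_j=\kappa_j\omega_j$, deduce $f^*\omega_2=(\kappa_1/\kappa_2)\,\omega_1$ with a positive constant, and invoke Theorem~\ref{domain} to get the biholomorphism onto $D_2\setminus E$ together with $\lambda=1$; your positivity argument for $\lambda$ simply spells out what the paper asserts with ``in particular''.
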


\begin{proof}
 Suppose ${\rm Ric}_1=\kappa_1\omega_1,         {\rm Ric}_2=\kappa_2\omega_2,$ with $    \kappa_1\kappa_2\neq 0.$
Then $   f^*\omega_2      = \frac{\kappa_1}{\kappa_2}\,\omega_1.$ In particular $ \frac{\kappa_1}{\kappa_2} >0$. By Theorem \ref{domain}, there exists a relatively closed pluripolar set
$   E\subset D_2 $ such that $   f:D_1\rightarrow D_2\setminus E $ is biholomorphic and 
$ \frac{\kappa_1}{\kappa_2} =1$.
\end{proof}

\begin{corollary}\label{ric2}
Let $D_1, D_2$ be bounded domains in $\CC^n$.  
Assume that $(D_1, \omega_1)$ and   $(D_2, \omega_2)$ are both complete K\"ahler-Einstein manifolds. For an open connected subset $U \subset D_1$, if a local holomorphic map 
 $f: U \to D_2$  satisfies $f^* {\rm Ric}_2={\rm Ric}_1$ on $U$,  
then $f$ extends to a biholomorphism from $D_1$ to $D_2$ and $  f^*\omega_2=\omega_1.$
\end{corollary}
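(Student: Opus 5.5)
The plan follows the proof of Corollary \ref{ric1}, combined with the local extension results for complete Bergman metrics. Write ${\rm Ric}_1=\kappa_1\omega_1$ and ${\rm Ric}_2=\kappa_2\omega_2$ with constants $\kappa_1,\kappa_2$.

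\textbf{Step 1: the constants $\kappa_1,\kappa_2$ are nonzero.} The Bergman metric of a bounded domain is never Ricci-flat when complete. Indeed, $\omega_j=\ddc\log K_{D_j}$ and ${\rm Ric}_j=-\ddc\log\det(g_j)$. If $\kappa_j=0$, then $\det(g_j)$ would be the modulus squared of a holomorphic function, up to a pluriharmonic factor. The cleanest way I would rule this out is to cite the standard fact that a complete Kähler--Einstein metric on a bounded domain has negative Ricci curvature. If needed, I would instead use Yau's Schwarz lemma: comparing with the complete Kähler--Einstein metric of negative curvature on the pseudoconvex domain $D_j$ (which is pseudoconvex by \cite{Br55}) forces $\kappa_j<0$.

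\textbf{Step 2: reduce to a local Bergman isometry.} Once $\kappa_1\kappa_2\neq0$, the hypothesis $f^*{\rm Ric}_2={\rm Ric}_1$ on $U$ gives
\[
f^*\omega_2=\frac{\kappa_1}{\kappa_2}\,\omega_1 \quad\text{on } U .
\]
The left side is positive semidefinite and $\omega_1$ is positive definite, so $\lambda:=\kappa_1/\kappa_2>0$. Hence $f$ is a local holomorphic Bergman isometry and, in particular, an immersion.

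\textbf{Step 3: extend to a biholomorphism.} Since $D_2$ is complete and $\A(D_2)$ separates points (it contains the coordinate functions), Mok's extension theorem (Proposition \ref{m2012}) extends $f$ to a holomorphic Bergman isometry $F:D_1\to D_2$ with $F^*\omega_2=\lambda\omega_1$. Since $D_1$ is complete, Corollary \ref{al1} shows that $F$ is a biholomorphism and $\lambda=1$. Equivalently, one can invoke the final corollary of Section 3.3 on the complete case. From $\lambda=1$ we get $F^*\omega_2=\omega_1$, and also $\kappa_1=\kappa_2$.

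\textbf{Main obstacle.} The main difficulty is Step 1: excluding $\kappa_j=0$, which is what makes the ratio $\kappa_1/\kappa_2$ meaningful. If only one constant could vanish, the relation $f^*{\rm Ric}_2={\rm Ric}_1$ would force both to vanish, since $f$ has full rank somewhere. So the real work is showing that no complete Bergman metric on a bounded domain is Ricci-flat. My first attempt would be a volume argument: a complete Ricci-flat Kähler manifold has at least linear volume growth, which contradicts the bounded-domain structure. If that fails, I would use the Schwarz lemma comparison with the Cheng--Yau metric described in Step 1.
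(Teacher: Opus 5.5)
Your proposal is correct and follows essentially the paper's proof. The paper likewise invokes Bremermann, Cheng--Yau and Mok--Yau to get that both Einstein constants are negative, so $f^*\omega_2=(\kappa_1/\kappa_2)\,\omega_1$ with a positive ratio, and then concludes by the complete-case extension argument: Mok's extension theorem together with Corollary~\ref{al1}, as in the proof of Corollary~\ref{curv}. Of your fallbacks for Step 1, the Schwarz-lemma argument works (as does Yau's Liouville theorem applied to the bounded coordinate functions), but the linear-volume-growth idea would not, since nothing about a bounded domain bounds the growth of the Bergman volume from above.
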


\begin{proof}
 It follows from \cite{Br55, CY80, MY80} that a bounded domain in $\CC^n$ admitting a complete K\"ahler-Einstein metric must be pseudoconvex and the scalar curvature is negative. 
Then the conclusion follows from combining the proof of the first part and the proof of Corollary \ref{curv}.
\end{proof}

\subsection{Further discussions}
In this section, we first show that for any $n \in \mathbb{N}$, there exist a bounded pseudoconvex domain $D$, and 
a holomorphic Bergman isometry $f: D\to D$ with isometric constant $\lambda=1$ that is not  a holomorphic automorphism of $D$.
  In this sense, Theorem \ref{domain} is optimal even for a self map.

We first construct an example in one complex dimension.
Let $ \Delta$ be the unit disc. Fixing a number \(a\in(0,1)\), let
$  A(\zeta)=\frac{\zeta+a}{1+a\zeta} \in\operatorname{Aut}(\Delta)$ and 
 $     p_j:=A^{\circ j}(0)$, for $j\in\mathbb Z$.
Equivalently, if \(\alpha=\operatorname{arctanh} a\), then 
$        p_j=\tanh(j\alpha).$ 
In particular, $        p_j\to -1$ as $ j\to -\infty$ and 
    $    p_j\to 1 $ as $ j\to +\infty.$
Define the discrete subset
$  E:=\{p_j:j\le 0\}\subset\Delta,$
and set $        D_0:=\Delta\setminus E.$
Since \(E\) has no accumulation point in \(\Delta\), \(D_0\) is a bounded domain.
It is easy to see that the Bergman metric of \(D_0\) is just the restriction of the
Bergman metric of \(\Delta\). Indeed, every function
$  h\in A^2(D_0)$
extends holomorphically across the isolated punctures \(E\) to a holomorphic function \(\widetilde h\) on $\Delta$.Moreover, \(E\) has Lebesgue measure zero, so
$      \|h\|_{A^2(D_0)}=\|\widetilde h\|_{A^2(\Delta)}.$
Therefore
$   A^2(D_0)=A^2(\Delta)$
isometrically, after restriction to \(D_0\). Consequently,
$  K_{D_0}(z,w)=K_\Delta(z,w)\big|_{D_0\times D_0},$
and hence
$   ds^2_{D_0}=ds^2_\Delta\big|_{D_0}.$
Now consider
\[        f:=A|_{D_0}:D_0\to  D_0.\]
Since $        A^{-1}(E)=        \{p_j:j\le -1\}   \subset E$, $f$ is well-defined.
Thus, if \(z\notin E\), then \(A(z)\notin E\). Hence \(A(D_0)\subset D_0\).
Since \(A\) is an automorphism of \(\Delta\), it preserves the Bergman metric of
the disc $\Delta$. 
Therefore \(f\) is a holomorphic Bergman isometry from $D_0$ to $D_0$ with isometry constant 1. Namely, 
\[        f^* ds^2_{D_0}    =   A^*\bigl(ds^2_\Delta|_{D_0}\bigr)   =  ds^2_\Delta|_{D_0}   =  ds^2_{D_0}.\]
Moreover, 
\[ A(E) =   \{A(p_j):j\le0\}   =  \{p_{j+1}:j\le0\} = \{p_j:j\le1\}\]
implies
\[ f(D_0)  =A(\Delta\setminus E) = \Delta\setminus A(E) = \Delta\setminus\{p_j:j\le1\} =D_0\setminus\{p_1\}.\]
In particular,  \(f\) is not onto.

Let  $ D:=D_0\times\Delta^{n-1}\subset\mathbb C^n,$
and define
$   F:D \to D $
by
$   F(z_1,z_2,\dots,z_n)  =  \bigl(f(z_1),z_2,\dots,z_n\bigr).$
It follows that  $  F(D)=D\setminus \bigr(\{p_1\} \times\Delta^{n-1}\bigr)$ and thus 
 $F$ is not onto.

\section{Rigidity for local holomorphic Bergman isometries and application to analytic correspondence}
\subsection{Rigidity for local holomorphic Bergman isometries}

\begin{proof}[Proof of Theorem~\ref{product}]
Write $F=(f_1, \cdots, f_m), \Omega=D^m$. 
By Mok's extension theorem (cf. Theorem 2.1.2 in \cite{M12}), $F$ extends to a proper holomorphic map, still denoted by $F=(f_1, \cdots, f_m)$, from $D$ to $\Omega$, since $\omega_D$ and $\omega_\Omega$ are complete. 
By Theorem III.15 of Zorn~\cite{Z82}, for every compact subset
\(L\Subset D\), there exists a domain \(D_L^\sharp\supset\overline D\)
such that the Bergman kernel \(K_D(z,\bar w)\) admits a joint extension
to a neighborhood of \(D_L^\sharp\times L\), holomorphic in \(z\) and
anti-holomorphic in \(w\). 
The same conclusion holds for $\Omega$ as well. 
The proof of
Theorem 2.1.2 by Mok in \cite{M12} uses only this compact-parametric, one-sided
extension property for \(w\) in a sufficiently small neighborhood of
the base point, together with the analogous property for the target.
Consequently, Mok's argument yields that 
there are open sets \(U \supset\overline D\) and
  \( V \supset\overline\Omega\), and an irreducible
  \(n\)-dimensional complex-analytic subvariety $     S \subset U \times V$,
  such that \(S\cap(D\times\Omega)=\Graph(F)\).

  Let \(\pi:S\to U\) be the first projection.  Define the bad locus
\[ B:=\Sing S\ \cup\ \bigl\{x\in\Reg S: {\rm rank}_{\C}d\pi_x<n\bigr\}.\]
On the interior graph \(\Graph(F)\), the restriction of \(\pi\) is
the inverse of \(z\mapsto(z,F(z))\), so it has rank \(n\).
Because \(S\) is irreducible, \(\Reg S\) is connected.  
Consequently \(B\) is  a complex-analytic subset
of complex dimension at most \(n-1\).
Since the analytic set \(S\), and thus 
\(B\), is second countable, we may cover \(B\) by
countably many relatively compact coordinate pieces of its analytic
strata.  On every such piece the coordinate projection \(\pi\) is
locally Lipschitz.  Therefore the real Hausdorff
dimension of $\pi(B)$ is at most $2n-2.$
Since the smooth hypersurface \(\partial D\) has real Hausdorff
dimension \(2n-1\), one may choose
$ p\in\partial D\setminus\pi(B).$
Take any sequence \(z_\nu\in D\) with \(z_\nu\to p\).
After passage to a subsequence, 
$ F(z_\nu)\rightarrow q\in \partial \Omega$
since $F$ is proper.
Properness excludes \(q\in\Omega\), so \(q\in\partial\Omega\).
Because \(S\) is relatively closed in \(U\times V\),
\((p,q)\in S\).  Furthermore, \(S\) is smooth at
\((p,q)\) and \(d\pi_{(p,q)}\) is invertible.  The holomorphic
inverse function theorem yields the corresponding local piece of
\(S\) as $ \bigl\{(z,\widehat F(z)):z\in U_p\bigr\}$
for a holomorphic map \(\widehat F\), with
\(\widehat F(p)=q\).
For all sufficiently large \(\nu\), the points
\((z_\nu, F(z_\nu))\) lie on this local sheet.  Near each such point
the sheet lies in \(D\times\Omega\), where \(S\) is exactly
\(\Graph(F)\).  Thus \(\widehat F=F\) on a nonempty open subset of
\(U_p\cap D\).  Shrinking \(U_p\) so that \(U_p\cap D\) is connected,
the identity theorem yields $  \widehat F=F$ on $U_p\cap D. $
Again since $F$ is proper,  $ \widehat F(U_p\cap\partial D)\subset\partial\Omega.$

By the Baire category theorem, there exist an open neighborhood $W$ of a point $p' \in \partial D$ and some $1 \leq j_0 \leq m$, such that $f_{j_0}(W \cap \partial D) \subset \partial D$. The standard Hopf lemma yields that $f_{j_0}$ is transversal at $p'$ and thus is a local biholomorphism. It follows from Theorem A in \cite{NS05} that $f_{j_0}$ extends to a holomorphic automorphism of $D$, since $D$ is simply connected. By the standard induction argument, each $f_j$ is a holomorphic automorphism of $D$ and thus $\lambda=m$.
\end{proof}

This result generalizes Mok's rigidity result when $D=\mathbb{B}^n$ \cite{M02}. 
Note that Mok's $p$-th root map provides a one-dimensional example from the unit disc $\Delta$ to the polydisc $\Delta^p$ that is holomorphically isometric but not totally geodesic \cite{M11}. 
Similar non-standard examples can be constructed from the polydisc $\Delta^n$ to its Cartesian product $(\Delta^n )^m$.
Nevertheless, 
we conjecture that the following rigidity result holds when the dimension is at least 2.

\begin{question}
Let $D$ be a smooth bounded domain in $\C^n, n \geq 2$ and $U \subset D$ be a connected open subset.  For $1\leq j\leq m$, let
$  f_j: U\rightarrow D$
be a holomorphic map, and assume that every \(f_j\) has full rank
\(n\) at some point of \(U\).  If there exists $\lambda>0$ such that
\begin{equation*}
  \sum_{j=1}^{m}f_j^*\omega_D=\lambda\omega_D
\end{equation*}
holds on $U$, 
does each $  f_j$ extend to an element in $\Aut(D)$?
\end{question}

\subsection{Analytic correspondence between complex analytic spaces}

\begin{definition}
Let \(D\subset\C^n\) be a bounded domain with Bergman metric $\omega_D$. A reduced
irreducible complex analytic space \(S\) of pure dimension \(n\) is called a complex analytic space associated to $D$
if
$ \pi: D\rightarrow S$
is a surjective holomorphic quotient map and additionally if there is a
nonempty dense open set $ S^\circ\subset\Reg S$
such that:
\begin{enumerate}
\item \( \pi \) is locally biholomorphic over \(S^\circ\);
\item a positive smooth \((1,1)\)-form \(\omega_S\) is defined on
  \(S^\circ\); 
\item $ \pi^*\omega_S=\omega_D$  on $\pi^{-1}(S^\circ)$.
\end{enumerate}
\end{definition}

\begin{definition}
A finite analytic correspondence on \(S\)
is a pure \(n\)-dimensional irreducible analytic subset
$ Y\subset S\times S$
such that the two restrictions of the coordinate projections
$ \pi_i:=\pr_i|_Y:Y\rightarrow S$
are proper with finite
fibers  and surjective, for $  i=1,2$. 
The generic degree of \(\pi_i\) is denoted by $ d_i:=\deg \pi_i.$
\end{definition}

When \(S\) and \(Y\) are algebraic, this is called a (finite) algebraic correspondence.

\begin{remark}
We regard \(Y\) as the multivalued map $ T_Y=\pi_1\circ \pi_2^{-1}.$
Thus \(\pi_2\) is the source projection, \(\pi_1\) is the target
projection, and there are generically \(d_2\) branches.  If the
opposite convention \(\pi_2\circ \pi_1^{-1}\) is chosen, all statements
remain true after interchanging \(d_1\) and \(d_2\).
\end{remark}

Choose a point of \(S^\circ\) outside the discriminants of both
projections and 
outside the union, over the finitely many branches, of the inverse images of
\(S\setminus S^\circ\) under the local branches.  On a sufficiently
small connected open subset  \(V\Subset S^\circ\), one
has $\pi_2^{-1}(V)=\bigsqcup_{j=1}^{d_2}V_j,$
where every $ \pi_2|_{V_j}:V_j\rightarrow V$
is biholomorphic and \(\pi_1(V_j)\subset S^\circ\).  Define
\begin{equation}\label{eq:theta}
 \theta_j := \pi_1 |_{V_j}\circ(\pi_2|_{V_j})^{-1} :V\rightarrow S^\circ.
\end{equation}
After shrinking \(V\), every \(\theta_j\) is a biholomorphism onto
its image.

\begin{definition}\label{def:preserve}
If $ \sum_{j=1}^{d_2}\theta_j^*\omega_S=\lambda \omega_S$
 for some $\lambda >0$, we call that the finite analytic correspondence 
 \(Y\) preserves $\omega_S$. 
\end{definition}

The proof of the next theorem follows from the argument in \cite{CU03}.

\begin{theorem}
\label{thm:reduction}
Let \(Y\subset S\times S\) be a finite analytic correspondence
preserving \(\omega_S\).
Then there are a connected open set
\(U\subset D\) and holomorphic maps $ f_j:U\rightarrow D$ for $1\leq j\leq d_2,$
such that
\begin{equation}\label{eq:sum-isometry}
 \sum_{j=1}^{d_2}f_j^*\omega_D=d_1\omega_D.
\end{equation}
\end{theorem}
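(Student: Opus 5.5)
The plan is to follow Clozel--Ullmo: lift the correspondence through the local inverses of $\pi$, and then determine the constant $\lambda$ by a volume (degree) count.

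\textbf{Step 1: construct $U$ and the $f_j$.} Choose $V\Subset S^\circ$ as in the construction of \eqref{eq:theta}, and shrink $V$ so that it is simply connected and each $\theta_j(V)$ is evenly covered by $\pi$. Pick a connected component $U$ of $\pi^{-1}(V)$; by condition (1), $\pi|_U:U\to V$ is biholomorphic. Each $\theta_j(V)\subset S^\circ$ lies in a small open set over which $\pi$ has a local holomorphic inverse $\sigma_j$ with values in $D$. Set
\[ f_j:=\sigma_j\circ\theta_j\circ\pi|_U:U\to D. \]
Since $\pi\circ\sigma_j=\mathrm{id}$ and $\pi^*\omega_S=\omega_D$ on $\pi^{-1}(S^\circ)$, we get $\sigma_j^*\omega_D=\omega_S$. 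Hence
\[ f_j^*\omega_D=(\pi|_U)^*\theta_j^*\omega_S, \]
and summing over $j$ gives
\[ \sum_j f_j^*\omega_D=\lambda\,(\pi|_U)^*\omega_S=\lambda\,\omega_D. \]

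\textbf{Step 2: show $\lambda=d_1$.} This is the main obstacle. The plan is to compare top-degree forms against a test function. Let $\mu:=\omega_S^n$ on $S^\circ$. Pull back $\omega_S$ to $Y^\circ:=\pi_1^{-1}(S^\circ)\cap\pi_2^{-1}(S^\circ)$ via the two projections. On the branches $V_j$ we have $\pi_2$-coordinates, and $\theta_j^*\omega_S$ corresponds to $\pi_1^*\omega_S|_{V_j}$. The identity
\[ \sum_j\theta_j^*\omega_S=\lambda\,\omega_S \]
says that the pushforward $(\pi_2)_*(\pi_1^*\omega_S)$ equals $\lambda\,\omega_S$ as $(1,1)$-forms on the generic set. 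This pushforward is first defined locally on $V$. Since both sides are real-analytic and the generic part of $Y$ is connected (by irreducibility), the identity propagates to the whole generic locus.

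Pair this with $\omega_S^{n-1}$. The cleaner route follows \cite{CU03}: apply the trace to $\theta_j^*$ of the volume form. Taking $n$-th powers of a sum is not linear, so instead I would use the linear relation directly. Integrate
\[ (\pi_2)_*\pi_1^*\omega_S\wedge\omega_S^{n-1}=\lambda\,\omega_S^n \]
against a compactly supported cutoff $\chi$ on $S^\circ$. Then invert the roles of $\pi_1$ and $\pi_2$ using the projection formulas
\[ \int_Y\pi_1^*\alpha=d_1\int_S\alpha,\qquad \int_Y\pi_2^*\alpha=d_2\int_S\alpha, \]
valid for integrable top forms $\alpha$ on $S^\circ$, since $\pi_i$ is generically $d_i$-to-one and the bad locus is analytic and hence null.

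\textbf{Step 3: handle finiteness of the volume.} The projection formulas require $\int_{S^\circ}\omega_S^n<\infty$, as in the arithmetic setting. If that fails, I would instead compare local volume growth via exhaustion, which is where I expect trouble. Failing that, I would use the expected later input: by Theorem \ref{product}, if $D$ is of that type, each $f_j$ extends to an automorphism, so each term satisfies $f_j^*\omega_D=\omega_D$. The sum then equals $d_2\,\omega_D$. A final degree symmetry argument, applying the same construction to the transposed correspondence and using that automorphisms preserve the invariant volume, gives $d_1=d_2$. This matches \eqref{eq:sum-isometry}.
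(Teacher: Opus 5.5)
Your Step 1 is exactly the paper's proof. The paper lifts through local inverses of $\pi$ over $V$ and over each $\theta_j(V)$, sets $f_j=\tau_j\circ\theta_j\circ\pi|_U$, and gets $\sum_j f_j^*\omega_D=(\pi|_U)^*\bigl(\sum_j\theta_j^*\omega_S\bigr)$. It then simply writes $d_1\omega_S$ for $\sum_j\theta_j^*\omega_S$, i.e.\ it replaces the $\lambda$ of Definition~\ref{def:preserve} by $d_1$ without argument. You correctly identify this as the crux, but Steps 2--3 do not establish it.

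In Step 2, after wedging with $\omega_S^{n-1}$ the integrand on $Y$ is $\pi_2^*\chi\cdot\pi_1^*\omega_S\wedge\pi_2^*\omega_S^{n-1}$. For $n\geq2$ this mixed form is not of the form $\pi_1^*\alpha$, so $\int_Y\pi_1^*\alpha=d_1\int_S\alpha$ does not apply. The computation gives $\lambda=d_1$ only when $n=1$, after letting $\chi\uparrow1$, and only if $\Vol(S)<\infty$, which is not a hypothesis. In Step 3, Theorem~\ref{product} needs assumptions on $D$ that Theorem~\ref{thm:reduction} does not make, and it yields $\lambda=d_2$. Your ``degree symmetry'' $d_1=d_2$ comes from integrating $\sum_j\theta_j^*\omega_S^n=d_2\,\omega_S^n$, which again requires finite volume.

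In fact no argument can close this gap, because for the lifted branches the constant $d_1$ is false in general. Let $D=\Delta$, let $\pi:\Delta\to S=\Delta^*$ be the universal covering, and let $Y=\{(w,w^2):w\in\Delta^*\}$. Both projections are proper, finite and surjective, with $d_1=1$ and $d_2=2$. In the upper half-plane model, where $\pi(\tau)=e^{2\pi i\tau}$, the branches $u\mapsto\pm\sqrt u$ of $\pi_1\circ\pi_2^{-1}$ lift to the automorphisms $\tau\mapsto\tau/2$ and $\tau\mapsto(\tau+1)/2$. Hence $\theta_1^*\omega_S+\theta_2^*\omega_S=2\,\omega_S$, and the lifts satisfy $f_1^*\omega_D+f_2^*\omega_D=2\,\omega_D\neq d_1\omega_D$.

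The same happens for $n=2$ with $D=\mathbb{B}^2$ in the Siegel model $\{\operatorname{Im}w>|z|^2\}$. Take $\Gamma=\langle T\rangle$ with $T(z,w)=(z,w+1)$, and let $Y$ be the graph of the self-map of $D/\Gamma$ induced by $g(z,w)=(\sqrt2\,z,2w)$, for which $gTg^{-1}=T^2$. Here Theorem~\ref{product} applies and forces $\lambda=d_2=2$, while $d_1=1$.

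What your Step 1 and the paper's argument actually prove is $\sum_j f_j^*\omega_D=\lambda\,\omega_D$ with the $\lambda$ of Definition~\ref{def:preserve}. That is all the subsequent corollaries use, so the constant should be stated as $\lambda$. It equals $d_1$ only under extra hypotheses, for example $\Vol(S)<\infty$ together with either $n=1$ or the rigidity of Theorem~\ref{product}.
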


\begin{proof}
Choose \(V\), \(V_j\), and \(\theta_j\) as in \eqref{eq:theta}.  Since \(\pi\) is locally
biholomorphic over \(S^\circ\), choose local inverse branches
\[ \sigma:V\rightarrow D,\quad \pi\circ\sigma=\operatorname{id}_V,\]
and
\[ \tau_j:\theta_j(V)\rightarrow D,\quad \pi \circ\tau_j=\operatorname{id}_{\theta_j(V)}.\]
Let $ U:=\sigma(V)$
and define
 $f_j := \tau_j\circ\theta_j\circ \pi |_U:U\rightarrow D.$
Then \(U\) is connected and open, every \(f_j\) is holomorphic, and
$ \pi \circ f_j=\theta_j\circ \pi |_U.$
It follows that 
\[
\begin{aligned}
 f_j^*\omega_D
 &=f_j^*(\pi^*\omega_S) =(\pi \circ f_j)^*\omega_S =(\theta_j\circ \pi |_U)^*\omega_S =(\pi |_U)^*(\theta_j^*\omega_S).
\end{aligned}
\]
Therefore \[
\begin{aligned}
 \sum_{j=1}^{d_2}f_j^*\omega_D =(\pi |_U)^*
   \left(\sum_{j=1}^{d_2}\theta_j^*\omega_S\right) =(\pi |_U)^*(d_1\omega_S) =d_1\omega_D.
\end{aligned}
\]
This proves \eqref{eq:sum-isometry}.
\end{proof}

\begin{definition}
If $f_j \in {\rm Aut}(D)$ for all $j=1, 2, \cdots, m$, then $Y$ is called a special correspondence.
\end{definition}

The following result follows by combining Theorem \ref{product}  and Theorem \ref{thm:reduction}.

\begin{corollary}
Let \(D\Subset\C^n\), \(n\geq2\), be a simply connected, bounded,
strongly pseudoconvex domain with real analytic boundary and $S$ be a complex analytic space associated to $D$. If a finite analytic correspondence $Y \subset S\times S$ preserves $\omega_S$,  
then $Y$ is a special correspondence.
\end{corollary}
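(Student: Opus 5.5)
The plan is to reduce the corollary to Theorem~\ref{product} by way of Theorem~\ref{thm:reduction}. Let $Y\subset S\times S$ be a finite analytic correspondence preserving $\omega_S$. Choose $V\Subset S^\circ$, the sheets $V_j$ and the local branches $\theta_j$ as in \eqref{eq:theta}, and use the lifts $\sigma,\tau_j$ from the proof of Theorem~\ref{thm:reduction}. This produces a connected open set $U=\sigma(V)\subset D$ and holomorphic maps $f_j=\tau_j\circ\theta_j\circ\pi|_U:U\to D$ for $1\le j\le d_2$ with
\[
\sum_{j=1}^{d_2}f_j^*\omega_D=\lambda\,\omega_D
\]
on $U$. (In the proof of Theorem~\ref{thm:reduction}, the constant $d_1$ really stands for the preservation constant $\lambda$; any positive constant suffices for us.)

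Next we check the full-rank hypothesis of Theorem~\ref{product}. Each $f_j$ is a composition of local biholomorphisms. The map $\pi|_U$ is a biholomorphism onto $V$, since $\sigma$ is a local inverse. The map $\theta_j$ is a biholomorphism onto its image after shrinking $V$, as arranged after \eqref{eq:theta}. Finally, $\tau_j$ is a local inverse of $\pi$ over $\theta_j(V)\subset S^\circ$, where $\pi$ is locally biholomorphic. Hence every $f_j$ has rank $n$ at every point of $U$. This is the only point that needs care: we must confirm that $V$ was chosen so that $\theta_j(V)\subset S^\circ$ and each $\theta_j$ is injective, and both were arranged before Definition~\ref{def:preserve}.

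Since $D$ is simply connected, strongly pseudoconvex, with real analytic boundary and $n\ge2$, Theorem~\ref{product} applies with $m=d_2$. It shows that each $f_j$ extends to an element of $\Aut(D)$, and that $\lambda=d_2$. By the definition of a special correspondence, $Y$ is therefore special. As a consistency check, $\lambda=d_2$ matches the computation with $d_1$ in Theorem~\ref{thm:reduction}: preservation of volume forces $d_1=d_2$. I expect no real obstacle, since all analytic depth sits in Theorem~\ref{product}.
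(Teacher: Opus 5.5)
Your proposal is correct and is the paper's argument: the paper obtains the corollary by combining Theorem~\ref{thm:reduction} with Theorem~\ref{product}. Your check that each $f_j$ is a composition of local biholomorphisms (hence of full rank on all of $U$), and your remark that any positive preservation constant suffices, supply the details the paper leaves implicit. The closing ``consistency check'' $d_1=d_2$ is not needed and should be dropped or hedged, since it relies on identifying the constant with $d_1$, which you yourself set aside.
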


Now, let $ \pi :D\rightarrow S=D/\Gamma$
is a regular holomorphic covering and $ \Gamma={\rm Deck}(\pi)\subset\Aut(D).$
The commensurator of \(\Gamma\) in \(\Aut(D)\) consists of all $ g\in\Aut(D)$ such that  $  \Gamma_g:=\Gamma\cap g^{-1}\Gamma g $ has finite index in both $\Gamma$ and $g^{-1}\Gamma g $, denoted by 
$ {\rm Comm}_{\Aut(D)}(\Gamma)$.

\begin{definition}
Let \(g\in  {\rm Comm}_{\Aut(D)}(\Gamma)\). 
 $ Y_g:=\Phi_g(D/\Gamma_g)$, the image of $ \Phi_g: D/\Gamma_g\rightarrow S\times S$, given by $\Phi_g([z]_{\Gamma_g})=(\pi(z), \pi(gz))$, is the modular correspondence induced by \(g\). 
An irreducible finite correspondence \(Y \subset S\times S \) is modular if $ Y=Y_g$
for some \(g\in {\rm Comm}_{\Aut(D)}(\Gamma)\). 
\end{definition}

By the standard deduction as in \cite{CU03}, we can easily obtain the following corollary.

\begin{corollary}
Let \(D\Subset\C^n\), \(n\geq2\), be a simply connected, bounded,
strongly pseudoconvex domain with real analytic boundary.
Let $ \Gamma={\rm Deck}(\pi)\subset\Aut(D)$ such that  $ \pi :D\rightarrow S=D/\Gamma$
is a regular holomorphic covering.
 If a finite analytic correspondence $Y \subset S\times S$ preserves $\omega_S$ up to the conformal constant $\lambda>0$, then $Y$ is a modular correspondence.
\end{corollary}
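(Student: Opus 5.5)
The plan is to reduce the statement to Theorem~\ref{product}, with the first corollary as an intermediate step, and then recover the modular correspondence from the resulting automorphisms in the manner of Clozel--Ullmo.

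\emph{Step 1: apply Theorem~\ref{thm:reduction}.} Since $\pi: D\to S=D/\Gamma$ is a regular covering, $S$ is a complex analytic space associated to $D$. We may take $S^\circ = S$ with $\omega_S$ the descended Bergman metric, because $\Gamma\subset\Aut(D)$ preserves $\omega_D$. Suppose $Y$ preserves $\omega_S$ with constant $\lambda$. Theorem~\ref{thm:reduction}, applied with $\lambda$ in place of $d_1$ (the computation is identical), gives the following data: a connected open set $U=\sigma(V)$ and holomorphic maps $f_j=\tau_j\circ\theta_j\circ\pi|_U$, for $1\le j\le d_2$, satisfying $\sum_j f_j^*\omega_D=\lambda\omega_D$. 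Each $f_j$ is a local biholomorphism, so it has full rank $n$.

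\emph{Step 2: apply Theorem~\ref{product}.} Theorem~\ref{product} shows that each $f_j$ extends to some $g_j\in\Aut(D)$ and that $\lambda=d_2$.

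\emph{Step 3: show that $Y\supset Y_{g}$ for $g=g_1$.}
\begin{itemize}
\item Consider the analytic set $W:=\{(\pi(z),\pi(g z)) : z\in D\}$.
\item On $U$ we have $\pi\circ g=\pi\circ f_1=\theta_1\circ\pi$. Hence the points $(\pi(z),\pi(gz))$ for $z\in U$ correspond to a local sheet of $Y$, namely the graph of $\theta_1$ after flipping factors, consistent with $T_Y=\pi_1\circ\pi_2^{-1}$.
\item Since $Y$ is irreducible of dimension $n$ and the image of $D$ under $z\mapsto(\pi(z),\pi(gz))$ is connected, the identity theorem for analytic sets gives that this image is contained in $Y$.
\item The image is the image of $\Phi_g$ on $D/\Gamma_g$, since $\Gamma_g$ is exactly the subgroup under which $z\mapsto(\pi z,\pi gz)$ is invariant.
\end{itemize}

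\emph{Step 4: show $g\in{\rm Comm}_{\Aut(D)}(\Gamma)$; this is the main obstacle.}
\begin{itemize}
\item The map $D/\Gamma_g\to Y$ has discrete fibres, and $\pr_2$ restricted to $Y$ is proper with finite fibres.
\item The composite $D/\Gamma_g\to S$, $[z]\mapsto\pi(gz)$, is the covering $D/(g^{-1}\Gamma g\cap\Gamma)\to D/g^{-1}\Gamma g\cong S$.
\item Its properness, obtained by the argument of Clozel--Ullmo, then forces the index $[g^{-1}\Gamma g:\Gamma_g]$ to be finite. Symmetrically, $\pr_1$ forces $[\Gamma:\Gamma_g]<\infty$.
\end{itemize}
The delicate point is that the image of $\Phi_g$ is a priori only an immersed analytic set. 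I would show that it is closed as follows: take limits of $(\pi z_\nu,\pi gz_\nu)$, use properness of $\pi_i$ on $Y$, and lift to $D$ using the action of $\Gamma$ together with the fact that $g$ is an automorphism.

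\emph{Step 5: conclude $Y=Y_g$.} Once $Y_g$ is a closed irreducible $n$-dimensional analytic subset of the irreducible $Y$, the two are equal, so $Y$ is modular.
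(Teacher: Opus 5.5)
Your reduction is the one the paper has in mind. Theorem~\ref{thm:reduction} (with $\lambda$ in place of $d_1$) together with Theorem~\ref{product} turns every branch into an automorphism, and the paper then simply invokes the Clozel--Ullmo deduction. One bookkeeping point: since $V_j=\{(\theta_j(s),s):s\in V\}$, the set that lies in $Y$ is $\{(\pi(gz),\pi(z))\}$, the transpose of your $W$. So you end up with $Y=Y_{g^{-1}}$, which is harmless because the commensurator is a group.

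The gap is in Step~4, the only step with real content.
\begin{itemize}
\item Discrete fibres of $D/\Gamma_g\to Y$ together with properness of $\pr_2|_Y$ do not make $[z]\mapsto\pi(gz)$ proper. For instance, compose the covering $\mathbb R\to S^1$ with $\mathrm{id}_{S^1}$.
\item Closedness of the image of $\Phi_g$ is the wrong target. Even granting that the image is closed (hence equal to $Y$), nothing in your argument stops infinitely many points of a fibre of the covering $D/\Gamma_g\to S$ from landing on the same one of the finitely many points of $Y$ over a given point of $S$.
\item Your limiting argument stalls at the same place. Lifting the two coordinates of $(\pi z_\nu,\pi gz_\nu)$ uses two different elements $\gamma_\nu,\delta_\nu\in\Gamma$. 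To get convergence in $D/\Gamma_g$ you would need $\delta_\nu g\gamma_\nu^{-1}$ to take only finitely many values, which is exactly the finiteness you are trying to prove.
\end{itemize}

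The missing idea is that local sheets of $Y$ correspond to cosets of $\Gamma_g$. This bounds the indices directly and needs no closedness.
\begin{itemize}
\item Pick $z_0\in D$ with $\pi(z_0)$ off the discriminant of $\pi_2$, and a small connected $U\ni z_0$ with $\pi|_U$ injective. Then $\pi_2^{-1}(\pi(U))$ is a disjoint union of $d_2$ graphs over $\pi(U)$.
\item For $\gamma\in\Gamma$, the set $\{(\pi(g\gamma z),\pi(z)):z\in U\}$ lies in $Y$ by Step~3 applied at $\gamma z$. Being a connected graph over $\pi(U)$, it is one of these $d_2$ sheets.
\item Suppose $\gamma,\gamma'$ give the same sheet. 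Then $\pi(g\gamma z_0)=\pi(g\gamma' z_0)$, so $g\gamma z_0=\delta g\gamma' z_0$ for some $\delta\in\Gamma$.
\item Now $g\gamma$ and $\delta g\gamma'$ are lifts through the covering $\pi$ of the same map $U\to S$, and they agree at $z_0$. Hence they coincide on $U$, and on $D$ by the identity theorem. This gives $\gamma\gamma'^{-1}=g^{-1}\delta g\in\Gamma_g$.
\item Therefore $[\Gamma:\Gamma_g]\le d_2$. The same argument for $\pi_1$, parametrizing by $w=gz$, gives $[\Gamma:\Gamma\cap g\Gamma g^{-1}]\le d_1$, i.e.\ $[g^{-1}\Gamma g:\Gamma_g]\le d_1$.
\end{itemize}
Closedness then comes for free. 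The map $[z]\mapsto(\pi(gz),\pi(z))$ is proper on $D/\Gamma_g$ because its second component is a finite covering. By Remmert its image is a closed irreducible $n$-dimensional analytic set, and your Step~5 applies.

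If $D$ is not biholomorphic to the ball, there is an even shorter route. By the Wong--Rosay theorem $\Aut(D)$ is compact, so the discrete group $\Gamma$ is finite and both index conditions are automatic.
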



\section{On holomorphic maps preserving Bergman $(p, p)$-forms}

Clozel and Ullmo studied algebraic correspondences between Shimura
varieties that preserve volume forms \cite{CU03}. Their approach leads
naturally to the study of local holomorphic maps from a bounded
symmetric domain \(\Omega\subset\CC^n\) into its Cartesian product preserving the Bergman volume form. More
precisely, let \(U\subset\Omega\) be a connected open subset, and let
\[    F=(f_1,\ldots,f_m):U\rightarrow\Omega^m\]
be a holomorphic map satisfying
\[   \sum_{j=1}^m f_j^*\omega_\Omega^n =   \lambda\,\omega_\Omega^n\]
for some constant \(\lambda>0\). Mok and Ng proved that \(F\) is
totally geodesic \cite{MN12}. More generally, Mok posed the rigidity
problem for local holomorphic maps from \(\Omega\) into \(\Omega^m\)
that preserve Bergman \((p,p)\)-forms \cite{M11}. Combining the results
of \cite{Y17,DY26} yields an affirmative answer to this problem in all
cases. As a consequence, one obtains the corresponding modularity
theorem for algebraic correspondences between Shimura varieties that
preserve these \((p,p)\)-forms.

Chan and Yuan studied local holomorphic maps
$ f:(M_1,\omega_1)\rightarrow(M_2,\omega_2),$
where \(M_1\) and \(M_2\) are K\"ahler manifolds of complex dimensions
\(n\) and \(N\geq n\), respectively, satisfying
\[   f^*\omega_2^p=\lambda\,\omega_1^p\]
for some constant \(\lambda>0\). They proved that, when \(1 \leq p<n\), the
map \(f\) must be a holomorphic Bergman isometriy \cite{CY25}. Consequently,
the metric-preservation hypothesis in Theorem \ref{domain} - \ref{al3} may be
replaced by the assumption that \(f\) preserves the Bergman
\((p,p)\)-form, up to a positive constant, for any \(p<n\). 
In contrast, no
analogous result is currently known when \(p=n\). Moreover, it remains
open whether the conclusion of Theorem~\ref{product}, together with
its subsequent consequences, continues to hold under the analogous
preservation condition for Bergman \((p,p)\)-forms with \(p>1\).
These problems merit further investigation.

\section*{Acknowledgments}

The author is very grateful to Xiaojun Huang for helpful discussions 
during a satellite conference of ICM 2026 at Rutgers-New Brunswick from July 17 to July 21, which leads to Theorem \ref{main1}, Theorem \ref{thm:main2}, Theorem \ref{al2} and Theorem \ref{al3}.
Before posting this paper on arXiv, the author learned that Ebenfelt, Treuer and Xiao had independently
obtained some similar results in their preprint \cite{ETX26} (cf. Theorem  \ref{main1}, Theorem \ref{al2} and Proposition \ref{negligible} in the present paper). 
The author would also like to thank Ming Xiao for helpful discussion on Theorem \ref{main1} and Theorem \ref{al2}. 
The methods in two papers differ
substantially, where
Ebenfelt, Treuer and Xiao apply the
Calabi-type extension theorem while Irgens’ L2-envelope of holomorphy is used here. 
The author was partially supported by Zhejiang Provincial Natural Science Foundation of China
(Grant No. LQKWL26A0201).



\

\end{document}